\algrenewcommand\algorithmicrequire{\textbf{Input:}}
\algrenewcommand\algorithmicensure{\textbf{Output:}}
\xpatchcmd{\algorithmic}{\ALG@tlm\z@}{\ALG@tlm\z@\leftmargin 0.5cm}{}{}
\renewcommand{\fnum@algorithm}{\fname@algorithm}
\newcommand{\R}{\ensuremath{\mathbb R}}
\newcommand{\Mbb}{\mathbb{M}}
\newcommand{\Tbb}{\mathbb{T}}
\newcommand{\Sbb}{\mathbb{S}}
\newcommand{\bx}{\mathbf{x}}
\DeclareMathOperator{\tr}{tr}
\DeclareMathOperator{\sym}{{\rm sym}}
\DeclareMathOperator{\ddiv}{div}
\DeclareMathOperator{\ccurl}{curl}
\DeclareMathOperator{\ddev}{dev}
\DeclareMathOperator{\rrot}{rot}
\newcommand{\Tcal}{\ensuremath{\mathcal T}}
\newcommand{\Ecal}{\ensuremath{\mathcal E}}
\newcommand{\Acal}{\ensuremath{\mathcal A}}
\newcommand{\Fcal}{\ensuremath{\mathcal F}}
\newcommand{\Vcal}{\ensuremath{\mathcal V}}
\newcommand*{\Scale}[2][4]{\scalebox{#1}{$#2$}}%
\newcommand{\vfieldv}{\boldsymbol{v}}
\newcommand{\vfieldu}{\boldsymbol{u}}
\newcommand{\vfieldw}{\boldsymbol{w}}
\newcommand{\mfieldt}{\boldsymbol{\tau}}
\newcommand{\mfieldA}{\boldsymbol{A}}
\newcommand{\mfieldE}{\boldsymbol{E}}
\newcommand{\mfieldB}{\boldsymbol{B}}
\newcommand{\mfieldI}{\boldsymbol{I}}
\newcommand{\mfieldxi}{\boldsymbol{\xi}}
\newcommand{\mfieldzeta}{\boldsymbol{\zeta}}
\newcommand{\mfieldchi}{\boldsymbol{\chi}}
\newcommand{\nvec}{\boldsymbol{n}}
\newcommand{\tvec}{\boldsymbol{t}}
\newcommand{\Vspace}{\boldsymbol{V}}
\newcommand{\Vspaceh}{\Vspace_h}
 \newcommand{\Pif}{\Pi_f}
  \newcommand{\Pifn}{\Pi_n}
 \newcommand{\Pifsym}{\Pi_{f,{\rm sym}}}
\def\XXint#1#2#3{{\setbox0=\hbox{$#1{#2#3}{\int}$}
\vcenter{\hbox{$#2#3$}}\kern-.5\wd0}}
\newcommand{\Lnorm}[1]{\big\lVert #1 \big\rVert}
\newlength{\raisebulletlen}
\newcommand{\T}{\mathcal{T}}
\title{Conforming finite element  DIVDIV complexes and the application for the linearized Einstein-Bianchi system\thanks{\funding{The first author was supported by the NSFC Projects 11625101 and 11421101. The   third author was  supported  by the project {\em Approximation and reconstruction of stresses in the deformed configuration for hyperelastic material models} (STA 402/14-1) by the DFG via the priority program 1748 {\em Reliable Simulation Techniques in Solid Mechanics, 
Development of Non-standard Discretization Methods, Mechanical
and Mathematical Analysis}.}}}
 \author{Jun Hu\thanks{LMAM and School of Mathematical Sciences, Peking University,  100871 Beijing,  P. R. China (\email{hujun@math.pku.edu.cn}, \email{lyz2015@pku.edu.cn}).} \and
 YiZhou  Liang\footnotemark[2] \and Rui Ma\thanks{Fakult\"{a}t  f\"ur Mathematik, Universit\"at Duisburg-Essen, 45127 Essen, Germany (\email{ rui.ma@uni-due.de})}}
\begin{document}
\maketitle

\begin{abstract}
This paper presents the first family of conforming finite element  $\ddiv\ddiv$ complexes on tetrahedral grids in three dimensions. In these complexes, finite element spaces of $H(\ddiv\ddiv,\Omega;\Sbb)$ are from a current preprint [Chen and Huang, arXiv: 2007.12399, 2020] while finite element spaces of both $H(\sym\ccurl,\Omega;\Tbb)$ and $H^1(\Omega;\R^3)$ are newly constructed here. It is proved that these finite element complexes are exact. As a result, they can be used to discretize the linearized Einstein-Bianchi system within the dual formulation.
\end{abstract}
\begin{keywords}
divdiv complex, H(symcurl) conforming finite element, linearized Einstein-Bianchi system
\end{keywords}

\section{Introduction}
The linearized Einstein-Bianchi system from \cite{Quenne} reads
\begin{align*}
\dot{\mfieldE}+\ccurl\mfieldB=0,\ddiv\mfieldE=0,\\
\dot{\mfieldB}-\ccurl\mfieldE=0,\ddiv\mfieldB=0
\end{align*}  
with symmetric and traceless tensor fields $\mfieldE$ and $\mfieldB$, respectively. By introducing a new variable $\sigma(t)=\int_0^t\ddiv\ddiv\mfieldE\,ds$, the linearized Einstein-Bianchi system can be realized as a Hodge wave equation
\begin{align}\label{eq:strongformEB}
\begin{cases}
\dot{\sigma}=\ddiv\ddiv\mfieldE,\\
\dot{\mfieldE}=-\nabla\nabla\sigma-\sym\ccurl\mfieldB,\\
\dot{\mfieldB}=\ccurl\mfieldE.
\end{cases}
\end{align}
Given initial conditions $\sigma(0),\mfieldE(0)$ and $\mfieldB(0)$, and with appropriate boundary conditions,   \eqref{eq:strongformEB} is well posed (see \cite{Quenne}).
The weak formulation of \eqref{eq:strongformEB} in  \cite{Quenne}  introduces some Lagrange  multiplies to reduce the constraint of the symmetry of the electric field  $\mfieldE$ and high derivatives of $\sigma$. In \cite{huliang2020}, the first family of conforming finite element spaces of $H(\ccurl,\Omega;\Sbb)$ on a bounded polyhedral domain  $\Omega\subset\R^3$ is constructed as well as those finite element spaces associated with the  following Gradgrad complex from \cite{arnold2021complexes,PaulyZ2020}
\begin{align}\label{eq:gradgradcomplex}\Scale[0.9]{
P_1(\Omega)\xrightarrow[]{ \subset}H^2(\Omega) \xrightarrow[]{\nabla\nabla}H( \ccurl,\Omega;\Sbb) \xrightarrow[]{\ccurl}H(\ddiv, \Omega;\Tbb) \xrightarrow[]{ \ddiv }L^2(\Omega;\R^3) \rightarrow 0,}
\end{align}  
where the space $H(\ccurl,\Omega;\Sbb)$ consists of square-integrable tensors with square-integrable curl, taking values in the space of $\Sbb$ of symmetric matrices, and the space $H(\ddiv,\Omega;\Tbb)$ consists of square-integrable tensors with square-integrable divergence, taking value in the space $\Tbb$ of traceless matrices.
The finite element Gradgrad complexes  are utilized in \cite{huliang2020} to discretize \eqref{eq:strongformEB} with a   weak formulation such that   $\mfieldE$ is sought in $C^0([0,T],H(\ccurl,\Omega;\Sbb))$ with the strongly symmetric constraint, $\sigma$ is  in $C^0([0,T],H^2(\Omega))$ and the magnetic tensor field $\mfieldB$ is in $C^1([0,T],L^2(\Omega;\Tbb))$. The polynomial degree of the lowest order  case for the pair $(\sigma,\mfieldE,\mfieldB)$ is $P_9(K)-P_7(K;\Sbb)-P_6(K;\Tbb)$ on each tetrahedron $K$. In \cite{chen2020discrete}, conforming and nonconforming  virtual element Gradgrad complexes are constructed on tetrahedral grids.

This paper considers the discretization of \eqref{eq:strongformEB} in the dual weak formulation of \cite{huliang2020} (see \eqref{eq:EinBian} below)  with $\mfieldE\in C^0([0,T],H(\ddiv\ddiv,\Omega;\Sbb))$,  $\sigma\in C^1([0,T],L^2(\Omega))$ and $\mfieldB\in C^0([0,T],H(\sym\ccurl;\Tbb))$. The associated finite element spaces for the linearized Einstein-Bianchi system is then closely related to the $\ddiv\ddiv$ complex    (dual to \eqref{eq:gradgradcomplex}) from \cite{arnold2021complexes,PaulyZ2020}\begin{align}\label{eq:complex3Dintro}\Scale[0.9]{
RT\xrightarrow[]{ \subset}H^1(\Omega;\R^3) \xrightarrow[]{ \ddev\nabla}H(\sym\ccurl,\Omega;\Tbb) \xrightarrow[]{\sym\ccurl}H(\ddiv\ddiv,\Omega;\Sbb) \xrightarrow[]{ \ddiv\ddiv}L^2(\Omega) \rightarrow 0.}
\end{align}
Such a  complex is exact provided that the domain is contractible and Lipschitz. A family of conforming finite element spaces    $\Sigma_{k,h}\subset H(\ddiv\ddiv,\Sbb)$ has been constructed for $k\geq 3$  in \cite{ChenHuang20203D}  on tetrahedral grids  (see \cite{ChenHuang20202D} in two dimensions). Another family of conforming finite element spaces $\widehat{\Sigma}_{k,h}\subset H(\ddiv\ddiv,\Sbb)$ has been designed in \cite{HuMaZhang2020} on simplicial grids for both two and three dimensions by using the $H(\ddiv,\Sbb)$ conforming finite element spaces from \cite{Hu2015trianghigh,HuZhang2014a,HuZhang2015tre}. The discontinuous Petrov-Galerkin (DPG) discretization of $H(\ddiv\ddiv,\Omega;\Sbb)$ can be found in \cite{FH2019,FHN2019}.
The main contribution of the paper is to construct a family of conforming  finite element spaces $\Lambda_{k+1,h}\subset H(\sym\ccurl;\Tbb) $ of order $k+1$ such that  the following finite element $\ddiv\ddiv$ complexes
\begin{align}\label{eq:complex3Dintrodis} {
RT\xrightarrow[]{ \subset}V_{k+2,h} \xrightarrow[]{ \ddev\nabla}\Lambda_{k+1,h} \xrightarrow[]{\sym\ccurl}\Sigma_{k,h} \xrightarrow[]{ \ddiv\ddiv}P_{k-1}(\T)\rightarrow 0 }
\end{align}
are exact together 
with a family of $H^1$ vectorial conforming finite element spaces $V_{k+2,h}$ of order $k+2$ and discontinuous piecewise  polynomial spaces $P_{k-1}(\T)$ on tetrahedral grids $\T$.    The construction is based on the finite element  de Rham complexes and the  new finite element strain complexes in two dimensions.  The associated finite elements of the strain complex  have extra continuity at vertices compared with those elements in \cite{ChenHuang20202D}. The construction shows some geometry decomposition in two and three dimensions as in \cite{CHK2018} for the de Rham complex, see more references therein.  These elements are used to discretize the linearized Einstein-Bianchi system, and the error estimates are provided in the end of the paper.  The polynomial degree of the lowest order  case for the pair $(\sigma,\mfieldE,\mfieldB)$ in this paper is  $P_1(K)-P_3(K;\Sbb)-P_4(K;\Tbb)$. It is more practical  compared with the finite element methods in \cite{huliang2020}.

 Throughout this paper, 
denote the space of all $3\times 3$ matrices by $\Mbb$, all symmetric $3\times 3$ matrices by $\Sbb$, and all  trace-free  $3\times 3$ matrices by $\Tbb$. Standard notation in Sobolev spaces will be used such as $L^2(\omega;X)$ and  $H^m(\omega,X)$, taking values in the finite-dimensional space $X$. Let $P_k(\omega;X)$ denote the set of all polynomials over $\omega$ of total degree not greater than $k$. The range space $X$ will be either $\R,\R^3,\mathbb{M},\Tbb,\Sbb$ in three dimensions.
If $X=\R$, then $L^2(\omega)$ abbreviates $L^2(\omega;X)$, similarly for $H^m(\omega)$ and $P_k(\omega)$.  

The organization of the paper is as follows. Section~\ref{sec:pre}
 introduces some operators for vectors and tensors.  Section~\ref{sec:FEM2D} introduces the  finite elements with extra continuity at vertices corresponding to the de Rham complex and strain complex, respectively. Section~\ref{sec:FEM3D} first constructs a family of $H(\sym\ccurl,\Tbb)$ conforming finite elements.  Second, the finite element $\ddiv\ddiv$ complexes \eqref{eq:complex3Dintrodis} are established  for a contractible domain. Section~\ref{sec:EB} uses the newly proposed finite element spaces to discretize  the linearized Einstein-Bianchi system within the dual formulation and shows the error estimates.
\section{Preliminaries}\label{sec:pre}
This section prepares some operators for vectors and tensors. More related results can be found in \cite{ChenHuang20203D}.  Let $\mfieldI$ denote the $3\times 3$ identity matrix.
Given a matrix $\mfieldA\in\R^{3\times 3
}$, define
\begin{align*}
\sym\mfieldA=\frac{1}{2}(\mfieldA+\mfieldA^T),\;\ddev \mfieldA=\mfieldA-\frac{1}{3}{\rm tr}(\mfieldA)\mfieldI.
\end{align*}
For a vector $\vfieldv=(v_1,v_2,v_3)^T$, define a skew-symmetric matrix as follows
\begin{align*}
{\rm mspn\ }\vfieldv=\begin{pmatrix}
0 &-v_3& v_2\\
v_3 &0& -v_1\\
-v_2&v_1&0 
\end{pmatrix}.
\end{align*}
For a matrix function $\mfieldA$,
the $\ccurl$ and $\ddiv$ operators apply row-wise to produce a matrix function $\ccurl \mfieldA$ and a vector function $\ddiv \mfieldA$, respectively. 
For a vector function $\vfieldv=(v_1,v_2,v_3)^T$, the gradient $\nabla$ applies row-wise to produce a matrix function 
\begin{align*}
\nabla \vfieldv=\begin{pmatrix}
\partial_x v_1 &\partial_y v_1 &\partial_z v_1\\
\partial_x v_2 &\partial_y v_2 &\partial_z v_2\\
\partial_x v_3 &\partial_y v_3&\partial_z v_3 
\end{pmatrix}.
\end{align*}
Define the symmetric gradient by
\begin{align*}
\epsilon (\vfieldv)=\sym (\nabla \vfieldv).
\end{align*}

Given a plane $f$ with the unit normal vector $\nvec$, for a vector $\vfieldv\in \R^3$, the following orthogonal decomposition holds
\begin{align*}
\vfieldv=\Pifn \vfieldv+\Pif \vfieldv:=(\vfieldv\cdot \nvec)\nvec+(\nvec\times \vfieldv)\times \nvec.
\end{align*}
Define the tangential derivatives and the surface curl operator  for a scalar function $	q$ as
\begin{align*}
\nabla_f q  =\Pif\nabla q=(\nvec\times \nabla q)\times \nvec,  \quad \ccurl_f q  =\nvec\times\nabla q.
\end{align*}
Define the surface rot operator for a vector function $\vfieldv$ as
\begin{align*}
\rrot_f \vfieldv  =(\nvec\times\nabla)\cdot \vfieldv=\nvec\cdot(\ccurl \vfieldv)
\end{align*}
and   the surface divergence $\ddiv_f$ as
\begin{align*}
\ddiv_f \vfieldv=\ddiv_f(\Pif \vfieldv)=(\nvec\times \nabla)\cdot(\nvec\times \vfieldv)=\rrot_f(\nvec\times \vfieldv).
\end{align*}
The surface gradient $\nabla_f\vfieldv$ applies row-wise to produce a matrix function $\nabla_f \vfieldv$.
 Define the surface symmetric gradient $\epsilon_f$  by
\begin{align*}
\epsilon_f(\vfieldv)=\sym(\nabla_f (\Pi_f \vfieldv)).
\end{align*} 
In particular, for $\nvec=(0,0,1)^T$, $f$ is the $x-y$ plane. Then, these operators $\nabla_f, \rrot_f,$ $\ccurl_f,\ddiv_f,\epsilon_f$ are standard differential operators in two dimensions. 

Given a matrix function $\mfieldA$,  $\nvec\times \mfieldA$ acts row-wise while $\mfieldA\times \nvec$ acts column-wise. The operators $\Pif$, $\rrot_f \mfieldA$ and $\ddiv_f \mfieldA$ act row-wise. Define the symmetric projection
\begin{align*}
\Pifsym\mfieldA=\sym(\Pif\mfieldA).
\end{align*}
Given a vector function $\vfieldv$ and a matrix function $\mfieldA$, the vector products commute with differentiation as follows
\begin{align*}
 (\nabla \vfieldv)^T\nvec=\nabla(\vfieldv\cdot\nvec),\\
 \nabla \vfieldv\times \nvec=\nabla(\vfieldv\times \nvec),\\
 (\ccurl \mfieldA)^T\nvec=\ccurl(\mfieldA^T\nvec).
\end{align*}

\section{Finite elements in two dimensions}\label{sec:FEM2D}
This section constructs new  finite elements on triangular grids in two dimensions for the de Rham complex and strain complex below with extra continuity at vertices. Some elements with extra continuity at vertices and along edges have been  introduced in \cite{CHK2018} for de Rham complexes in two and three dimensions. The finite elements  presented in this section have weaker continuity along edges compared  with  those elements from \cite{CHK2018}.  The finite elements for the strain complex have stronger continuity at vertices compared with those finite elements in \cite{ChenHuang20202D}. 
\subsection{Notation in two dimensions}
Throughout this section $\bx=(x,y)^T$ and $\bx^\perp=(-y,x)^T$. Suppose that $f$ is on the $x-y$ plane. Given a scalar function $q$ and a  vector function $\vfieldv=(v_1,v_2)^T$, the surface differential operators read $\nabla_fq=(\partial_x q,\partial_y q)^T,\ccurl_f q=(-\partial_yq,\partial_xq)^T$, $\ddiv_f\vfieldv=\partial_x v_1+\partial_y v_2$,  $\rrot_f \vfieldv=\partial_x v_2-\partial_y v_1$ and $\epsilon_f(\vfieldv)=\frac{1}{2}(\nabla_f\vfieldv+(\nabla_f\vfieldv)^T)$. To distinguish the  space $\Sbb$ of symmetric matrices in three dimensions, denote  the space of symmetric matrices  in two dimensions by $\Sbb_2:={\rm symmetric}\, \R^{2\times 2}$.

Let $RT_f$ denote the lowest order Raviart-Thomas space in two dimensions, which reads
\begin{align*}
    RT_f:=\{a+b\bx\ |\ a\in\R^2,b\in\R\}.
\end{align*}
Given a contractible domain $\omega\subset\R^2$, the de Rham complex  \cite{arnold2006}  in two dimensions reads 
\begin{align}\label{eq:complex2Da}
\R\xrightarrow[]{ \subset}H^1(\omega) \xrightarrow[]{ \nabla_f}H(\rrot_f,\omega;\R^2) \xrightarrow[]{\rrot_f} L^2(\omega) \rightarrow 0,
\end{align}
and the strain complex \cite{ChenHuang20202D} reads
\begin{align}\label{eq:complex2Db}
RT_f\xrightarrow[]{ \subset}H^1(\omega) \xrightarrow[]{ \epsilon_f}H(\rrot_f\rrot_f,\omega;\Sbb) \xrightarrow[]{\rrot_f\rrot_f} L^2(\omega) \rightarrow 0.
\end{align}
The corresponding polynomial complexes read \cite{arnold2006,ChenHuang20202D}  
\begin{align}\label{eq:polycomplex2Da}
\R\xrightarrow[]{ \subset}P_{k+2}(\omega)\xrightarrow[]{ \nabla_f}P_{k+1}(\omega;\R^2) \xrightarrow[]{\rrot_f}P_{k}(\omega)\rightarrow 0,\\\label{eq:polycomplex2Db}
RT_f\xrightarrow[]{ \subset}P_{k+2}(\omega;\R^2) \xrightarrow[]{ \epsilon_f}P_{k+1}(\omega;\Sbb) \xrightarrow[]{\rrot_f\rrot_f} P_{k-1}(\omega) \rightarrow 0.
\end{align}

Given a triangle $f$, let $\Ecal(f)$ denote the set of all edges of $f$. Given $e\in\Ecal(f)$, let $\nvec=(n_1,n_2)^T$ denote the unit normal vector of $e$ and $\tvec=(-n_2,n_1)^T$ denote the unit tangential vector of $e$. Let $\lambda_1,\lambda_2,\lambda_3$ denote the barycentric coordinates of $f$. 
\subsection{Finite element  de Rham complexes in two dimensions}This subsection constructs the finite elements with respect to the de Rham complex \eqref{eq:complex2Da}.

The shape function space of  the $H^1$ conforming  element is $P_{k+2}(f)$ with $k\geq 3$.  Define the set of polynomials of degrees $\leq k-1$ with vanishing values at vertices of $f$ by
\begin{align}\label{eq:secondchoiceBO}
\begin{aligned}
P_{k -1,0} (f):=\{p\in P_{k -1}(f)\ |\ p \text{ vanishes at all vertices of } f\}.
\end{aligned}
\end{align}
Following the idea  from \cite{huliang2020},
 the degrees of freedom of the $H^1$ conforming element with extra continuity at vertices are defined as follows
\begin{enumerate}
[label=(1\alph*)]
\item \label{enu:Honesa}
function value and first and second order derivatives at each vertex $\bx$:
\begin{align*}
p(\bx), \nabla_f p(\bx),\nabla^2_f p(\bx),
\end{align*}
\item\label{enu:Honesb}
 moments of order $\leq k-4$ on each edge $e$:
\begin{align*}
\int_e p q,\quad q\in P_{k-4}(e),
\end{align*}
\item\label{enu:Honesc} the interior degrees of freedom in $f$ defined by
\begin{align*}
\int_fpq,\quad q\in P_{k-1,0} (f).
\end{align*}
\end{enumerate} 
\begin{lemma}\label{lem:uniHone}
The degrees of freedom \ref{enu:Honesa}--\ref{enu:Honesc} are unisolvent for $P_{k +2}(f)$.\end{lemma}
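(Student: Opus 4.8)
The plan is to prove unisolvence by the standard two-step route: show that the functionals \ref{enu:Honesa}--\ref{enu:Honesc} and the shape-function space $P_{k+2}(f)$ have the same dimension, and then show that a polynomial $p\in P_{k+2}(f)$ annihilated by all of them must vanish. First I would record the dimension bookkeeping. The vertex data \ref{enu:Honesa} contribute $6$ per vertex (one value, two first derivatives, three independent entries of the symmetric Hessian), hence $18$ in total; the edge moments \ref{enu:Honesb} contribute $3(k-3)$ since $\dim P_{k-4}(e)=k-3$ (these are absent when $k=3$); and the interior moments \ref{enu:Honesc} contribute $\dim P_{k-1}(f)-3=\tfrac{k(k+1)}{2}-3$, the $-3$ accounting for the three vertex constraints built into $P_{k-1,0}(f)$. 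Summing gives $\tfrac{(k+3)(k+4)}{2}=\dim P_{k+2}(f)$, so the counts match and only injectivity remains to be shown.

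Next I would eliminate the boundary. Fix an edge $e$ with endpoints $\bx_i,\bx_j$ and restrict $p$ to $e$, obtaining a univariate polynomial of degree $\le k+2$. Because \ref{enu:Honesa} forces $p$, $\nabla_f p$ and the full Hessian $\nabla^2_f p$ to vanish at $\bx_i$ and $\bx_j$, the value, the tangential derivative $\tvec\cdot\nabla_f p$, and the second tangential derivative $\tvec^T(\nabla^2_f p)\tvec$ of $p|_e$ all vanish at both endpoints; thus $p|_e$ has a zero of order three at each endpoint and factors as $p|_e=\lambda_i^3\lambda_j^3\,r$ with $r\in P_{k-4}(e)$, where $\lambda_i,\lambda_j$ are the edge barycentric coordinates. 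For $k=3$ the factor $\lambda_i^3\lambda_j^3$ already has degree $6>k+2$, so $p|_e\equiv0$ is automatic; for $k\ge4$ I would test \ref{enu:Honesb} with $q=r$, so that $\int_e \lambda_i^3\lambda_j^3\,r^2\,\ds=0$ with a nonnegative integrand that is positive on the interior of $e$, forcing $r\equiv0$. Either way $p$ vanishes on every edge, hence on all of $\partial f$.

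Finally I would exploit the interior data. Since $p$ vanishes on $\partial f$, it is divisible by the cubic bubble $\lambda_1\lambda_2\lambda_3$, so $p=\lambda_1\lambda_2\lambda_3\,w$ with $w\in P_{k-1}(f)$. The crux of the argument---and the step I expect to be the main obstacle---is to certify that $w$ is an admissible test function, i.e. $w\in P_{k-1,0}(f)$. This is precisely where the extra vertex continuity is essential: at each vertex the bubble $\lambda_1\lambda_2\lambda_3$ vanishes to order exactly two, whereas \ref{enu:Honesa} makes $p$ vanish to order at least three; comparing orders of vanishing in the factorization $p=\lambda_1\lambda_2\lambda_3\,w$ then forces $w$ to vanish at every vertex, so indeed $w\in P_{k-1,0}(f)$. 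Testing \ref{enu:Honesc} with $q=w$ now gives $\int_f \lambda_1\lambda_2\lambda_3\,w^2=0$; as $\lambda_1\lambda_2\lambda_3\ge0$ on $f$ and is positive on $\interior f$, the integrand is nonnegative and must vanish identically, whence $w\equiv0$ and therefore $p\equiv0$. Together with the dimension count this yields unisolvence. Everything except the order-of-vanishing comparison certifying $w\in P_{k-1,0}(f)$ is the routine edge/bubble factorization typical of such $C^0$ Hermite-type elements; that comparison is the only genuinely delicate point, and it is exactly what makes the reduced interior space $P_{k-1,0}(f)$ (rather than all of $P_{k-1}(f)$) the correct choice.
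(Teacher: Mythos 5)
Your proof is correct and follows essentially the same route as the paper's: dimension count, boundary vanishing from the vertex and edge degrees of freedom, factorization $p=\lambda_1\lambda_2\lambda_3 w$, and the key observation that the extra vertex continuity forces $w\in P_{k-1,0}(f)$ so that the interior degrees of freedom kill $w$. Your order-of-vanishing comparison at the vertices is just a rephrasing of the paper's remark that the vanishing second derivatives of $p$ force $r$ to vanish at the vertices, and your explicit treatment of the edge restriction (including the $k=3$ case) merely fills in details the paper leaves implicit.
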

\begin{proof}
It is easy to check that the number of the degrees of freedom is equal to the dimension of $P_{k +2}(f)$. Given $p\in P_{k +2}(f)$, if \ref{enu:Honesa}--\ref{enu:Honesb} vanish for $p$, then $p=0$ on $\partial f$. This shows that there exists some $r\in P_{k -1}(f)$ such that
\begin{align*}
p=\lambda_1\lambda_2\lambda_3 r.
\end{align*}  
Since the second order derivatives of $p$ vanish at each vertex of $f$, $r$ vanishes at each vertex of $f$ as well. 
The degrees of freedom \ref{enu:Honesc} with $P_{k -1,0} (f)$ from \eqref{eq:secondchoiceBO} lead to $r=0$.  This concludes the proof.
\end{proof}

\bigskip
The shape function space of the $H(\rrot_f)$ conforming element is 
$P_{k +1}(f;\R^2)$ with $k \geq 3$.  Before introducing its degrees of freedom, define the polynomial space
\begin{align}\label{eq:vecfacebsec}
\begin{aligned}
 P_{k-1 ,1} (f) :=  \{q\in P_{k -1}(f)\ |\ \text{ there exists some  }r\in P_{k -1,0} (f)\text{ such that }\ddiv_f(q\bx)=r \} .
\end{aligned}
\end{align}
The bijection  $\ddiv_f: P_{k-1}(f)\bx\rightarrow P_{k-1}(f)$ \cite{arnold2006} guarantees   $\dim  P_{k-1 ,1} (f)=\dim P_{k-1,0}(f)$.
The degrees of freedom of the $H(\rrot_f)$ conforming element with extra continuity at vertices then read
\begin{enumerate}
[label=(2\alph*)]
\item \label{enu:Hrotva}
function value and first  order derivatives at each vertex $\bx$:
\begin{align*}
\vfieldu(\bx), \nabla_f \vfieldu(\bx),
\end{align*}
\item\label{enu:Hrotvb}
 moments of order $\leq k-3$ of the tangential component on each edge $e$:
\begin{align*}
\int_e \vfieldu\cdot \tvec q,\quad q\in P_{k -3}(e),
\end{align*}
\item\label{enu:Hrotvc}
 moments of order $\leq k-2$ of $\rrot_f$ on each edge $e$:
\begin{align*}
\int_e \rrot_f \vfieldu q,\quad q\in P_{k -2}(e),
\end{align*}
\item\label{enu:Hrotvd} the interior degrees of freedom in $f$ defined by
\begin{align*}
\int_f\vfieldu\cdot \vfieldv,\quad \vfieldv\in  \ccurl_f P_{k-3}(f)+ P_{k -1,1} (f)\bx.
\end{align*}
\end{enumerate}

\begin{lemma}\label{lem:unirot}
The degrees of freedom \ref{enu:Hrotva}--\ref{enu:Hrotvc} are unisolvent for $P_{k +1}(f;\R^2)$.\end{lemma}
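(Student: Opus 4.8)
The plan is to establish the lemma through the vanishing (injectivity) step of the usual unisolvence scheme: viewing \ref{enu:Hrotva}--\ref{enu:Hrotvc} as linear functionals on $P_{k+1}(f;\R^2)$, I would take $\vfieldu\in P_{k+1}(f;\R^2)$ that is annihilated by all of them and draw out the consequences edge by edge. The guiding observation is that these are precisely the boundary functionals of the element, so the substance of the proof is to show that the tangential trace $\vfieldu\cdot\tvec$ and the scalar rotation $\rrot_f\vfieldu$ are forced to vanish on $\partial f$. I fix an edge $e$ with endpoints $\bx_1,\bx_2$, write $s$ for the arclength coordinate, and let $b_e$ denote the quadratic on $e$ that vanishes simply at $\bx_1$ and $\bx_2$; the feature to keep in mind is that $b_e$ has a single sign on the open edge.

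First I would pin down the tangential component. Its restriction $(\vfieldu\cdot\tvec)|_e$ lies in $P_{k+1}(e)$. The vertex values in \ref{enu:Hrotva} give $(\vfieldu\cdot\tvec)(\bx_i)=0$, while the vertex gradients $\nabla_f\vfieldu(\bx_i)=0$ give the vanishing of $\tfrac{d}{ds}(\vfieldu\cdot\tvec)=\tvec^{T}(\nabla_f\vfieldu)\tvec$ at both endpoints. Hence $(\vfieldu\cdot\tvec)|_e$ has a double root at each endpoint and factors as $b_e^{2}r$ with $r\in P_{k-3}(e)$, the degrees matching since $(k+1)-4=k-3$. Testing the moments \ref{enu:Hrotvb} against $q=r$ gives $\int_e b_e^{2}r^{2}=0$, and positivity of $b_e^{2}$ on the open edge forces $r=0$, so $\vfieldu\cdot\tvec\equiv0$ on $e$.

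Next I would treat the rotation. Because $\rrot_f\vfieldu=\partial_x u_2-\partial_y u_1$ is a fixed linear combination of the entries of $\nabla_f\vfieldu$, the vertex condition $\nabla_f\vfieldu(\bx_i)=0$ forces $(\rrot_f\vfieldu)(\bx_i)=0$; thus $(\rrot_f\vfieldu)|_e\in P_{k}(e)$ vanishes at both endpoints and factors as $b_e\,w$ with $w\in P_{k-2}(e)$, again an exact degree match. Testing the moments \ref{enu:Hrotvc} against $q=w$ gives $\int_e b_e\,w^{2}=0$, and since $b_e$ keeps one sign on the open edge this forces $w=0$, whence $\rrot_f\vfieldu\equiv0$ on $e$. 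Running both arguments over the three edges shows that $\vfieldu\cdot\tvec$ and $\rrot_f\vfieldu$ vanish on all of $\partial f$, which is exactly the interelement trace data these vertex and edge functionals are designed to control.

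The step I expect to be the genuine obstacle is delimiting the scope of what these boundary functionals achieve. The edge arguments determine $\vfieldu\cdot\tvec$ and $\rrot_f\vfieldu$ on $\partial f$, but on each edge the normal trace $\vfieldu\cdot\nvec$ is only forced to have a double root at the endpoints (through the vertex data) and is otherwise unconstrained, and the behavior of $\vfieldu$ in the interior of $f$ is likewise untouched; extending the conclusion to $\vfieldu\equiv0$ throughout $P_{k+1}(f;\R^2)$ therefore requires supplementing \ref{enu:Hrotva}--\ref{enu:Hrotvc} with interior information, so the honest reading of the present functionals is that they are unisolvent for the tangential and rotational traces on $\partial f$. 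Within the edge reductions themselves the delicate point is the exact matching of the bubble factorizations to the moment spaces — degree $(k+1)-4=k-3$ after extracting $b_e^{2}$ from the tangential trace, degree $k-2$ after extracting the single factor $b_e$ from the rotation, where for the rotation the vanishing rests on the sign-definiteness of $b_e$ rather than on a perfect square. I would verify the lowest case $k=3$, in which $P_{k-3}(e)=P_0(e)$, to confirm that no edge functional is vacuous and that each count closes.
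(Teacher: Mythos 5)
Your edge-by-edge analysis is correct and is exactly the opening step of the paper's proof: the vertex values and gradients in \ref{enu:Hrotva} give double roots of $(\vfieldu\cdot\tvec)|_e\in P_{k+1}(e)$ and simple roots of $(\rrot_f\vfieldu)|_e\in P_k(e)$ at the endpoints, and the bubble factorizations $b_e^2r$, $r\in P_{k-3}(e)$, and $b_e w$, $w\in P_{k-2}(e)$, paired with the moments \ref{enu:Hrotvb} and \ref{enu:Hrotvc}, force $\vfieldu\cdot\tvec=0$ and $\rrot_f\vfieldu=0$ on $\partial f$. You have also correctly diagnosed that the statement as printed cannot hold: \ref{enu:Hrotva}--\ref{enu:Hrotvc} comprise only $18+3(k-2)+3(k-1)=6k+9$ functionals against $\dim P_{k+1}(f;\R^2)=(k+2)(k+3)$. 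This is a typo in the lemma; the paper's own dimension count, $18+3(k-2)+3(k-1)+k^2-k-3=(k+2)(k+3)$, includes the $k^2-k-3$ interior functionals, and its vanishing argument uses \ref{enu:Hrotvd} twice, so the intended range is \ref{enu:Hrotva}--\ref{enu:Hrotvd}.

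Where your proposal stops short is precisely the interior half of the intended proof, which is the part specific to this element. From your boundary conclusions, $\rrot_f\vfieldu\in P_k(f)$ vanishes on $\partial f$, so $\rrot_f\vfieldu=\lambda_1\lambda_2\lambda_3 r$ with $r\in P_{k-3}(f)$; integrating by parts (the boundary term drops because $\vfieldu\cdot\tvec=0$ on $\partial f$) turns the vanishing of the $\ccurl_f P_{k-3}(f)$ moments in \ref{enu:Hrotvd} into $\int_f\lambda_1\lambda_2\lambda_3\,r\,q=0$ for all $q\in P_{k-3}(f)$, whence $r=0$ and $\rrot_f\vfieldu\equiv 0$. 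Exactness on the triangle then gives a potential, and the zero tangential trace lets one write $\vfieldu=\nabla_f(\lambda_1\lambda_2\lambda_3 p)$ with $p\in P_{k-1}(f)$; the vertex conditions $\nabla_f\vfieldu(\bx)=0$ force $p$ to vanish at the vertices (at a vertex the Hessian of $\lambda_1\lambda_2\lambda_3 p$ reduces to $p$ times a nonzero rank-two matrix), so $p\in P_{k-1,0}(f)$. Finally, testing against $\vfieldv\in P_{k-1,1}(f)\bx$ and integrating by parts yields $\int_f\lambda_1\lambda_2\lambda_3\,p\,\ddiv_f\vfieldv=0$, and since $\ddiv_f$ maps $P_{k-1,1}(f)\bx$ onto $P_{k-1,0}(f)$ by the construction \eqref{eq:vecfacebsec}, the choice $\ddiv_f\vfieldv=p$ gives $p=0$, hence $\vfieldu=0$. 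This two-stage bubble argument (first killing $\rrot_f\vfieldu$, then the potential $p$) is the reason the space $P_{k-1,1}(f)$ is introduced at all, and it is reused verbatim to identify the bubble complex in the lemma that follows; without it your conclusion is, as you say, only a statement about the boundary traces.
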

\begin{proof}
The number of the degrees of freedom is equal to the dimension of $P_{k +1}(f;\R^2)$, namely
\begin{align*}
18+3(k -2)+3(k -1)+k ^2-k -3=(k +2)(k +3).
\end{align*}  If  \ref{enu:Hrotva}--\ref{enu:Hrotvc}  vanish for some $\vfieldu\in P_{k +1}(f;\R^2)$, then $\vfieldu\cdot \tvec$ and $\rrot_f \vfieldu$ vanish on $\partial f$. This shows 
\begin{align*}
\rrot_f \vfieldu=\lambda_1\lambda_2\lambda_3 r
\end{align*}
for some $r\in P_{k -3}(f)$.  For any $q\in  P_{k-3}(f)$, an integration by parts  leads to
\begin{align*}
\int_f\vfieldu\cdot \ccurl_f q=-\int_f\lambda_1\lambda_2\lambda_3rq.
\end{align*} This  leads to $r=0$ provided that \ref{enu:Hrotvd} vanishes for $\vfieldu$ and thus $\rrot_f \vfieldu=0$. This and the zero tangential boundary conditions on $\partial f$ of $\vfieldu$ show  that there exists some $p\in P_{k -1}(f)$ such that
\begin{align*}
\vfieldu=\nabla_f (\lambda_1\lambda_2\lambda_3 p).
\end{align*}
Since the first order derivatives of $\vfieldu$ vanish at vertices, $p$ vanishes at the vertices and $p\in P_{k-1,0}(f)$. Given $\vfieldv\in P_{k-1,1}(f)\bx$,
\begin{align*}
\int_f \vfieldu\cdot \vfieldv=-\int_f \lambda_1\lambda_2\lambda_3p\ddiv_f\vfieldv.
\end{align*}
The degrees of freedom \ref{enu:Hrotvd} plus \eqref{eq:vecfacebsec} lead to $p=0$.
This concludes $\vfieldu=0$.
\end{proof}

\bigskip
The shape function space of the $L^2$ element is $P_{k}(f)$. The following degrees of freedom coincide with the Lagrange element of order $k$ as
\begin{enumerate}
[label=(3\alph*)]
\item \label{enu:Ltwosa}
function value  at each vertex $\bx$:
\begin{align*}
p(\bx),
\end{align*}
\item\label{enu:Ltwosb}
 moments  of order $\leq k-2$ on each edge $e$:
\begin{align*}
\int_e p q,\quad q\in P_{k -2}(e),
\end{align*}
\item\label{enu:Ltwosc} moments of order $\leq k-3$ in $f$:
\begin{align*}
\int_fpq,\quad q\in P_{k -3}(f).
\end{align*}
\end{enumerate} 

\bigskip
Let $B_{k +2,\nabla_f} (f)$ denote  the $H^1$ bubble function space of $P_{k +2}(f)$ with vanishing degrees of freedom \ref{enu:Honesa}--\ref{enu:Honesb}, let  $B_{k +1,\rrot_f} (f)$ denote the $H(\rrot_f)$ bubble function space of $P_{k +1}(f;\R^2)$ with vanishing degrees of freedom \ref{enu:Hrotva}--\ref{enu:Hrotvc}, and let $B_{k,0} (f)$ denote the $L^2$ bubble function space of $P_{k}(f)$ with vanishing degrees of freedom \ref{enu:Ltwosa}--\ref{enu:Ltwosb}. These bubble functions form  exact complexes as in the following lemma.
\begin{lemma} For $k\geq 3$,
it holds that
\begin{align*}
0\xrightarrow[]{ \subset}B_{k +2,\nabla_f} (f) \xrightarrow[]{ \nabla_f}B_{k +1,\rrot_f} (f)\xrightarrow[]{ \rrot_f} B_{k ,0} (f)/ P_0(f)\rightarrow 0.
\end{align*}
\end{lemma}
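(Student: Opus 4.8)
The plan is to establish the three exactness properties of the stated bubble sequence directly, using the polynomial de Rham complex \eqref{eq:polycomplex2Da} together with the defining bubble conditions, and to close the bookkeeping with a dimension count. First I would confirm that the maps respect the bubble spaces. Since $\rrot_f\nabla_f\equiv 0$, only the mapping properties need checking. If $p$ has vanishing \ref{enu:Honesa}--\ref{enu:Honesb}, then $p=0$ on $\partial f$, so $\nabla_f p$ inherits vanishing vertex values and vertex gradients matching \ref{enu:Hrotva}, vanishing tangential edge moments \ref{enu:Hrotvb} (because $\partial_t p=0$ on each edge), and $\rrot_f\nabla_f p=0$ for \ref{enu:Hrotvc}; hence $\nabla_f p\in B_{k+1,\rrot_f}(f)$. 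Conversely, for $\vfieldu\in B_{k+1,\rrot_f}(f)$ the vertex conditions \ref{enu:Hrotva} give $\rrot_f\vfieldu(\bx)=0$ and the edge conditions \ref{enu:Hrotvc} give $\int_e\rrot_f\vfieldu\,q=0$, so $\rrot_f\vfieldu$ has vanishing \ref{enu:Ltwosa}--\ref{enu:Ltwosb} and lies in $B_{k,0}(f)$; moreover \ref{enu:Hrotva}--\ref{enu:Hrotvb} force $\vfieldu\cdot\tvec=0$ on each edge (its univariate restriction in $P_{k+1}(e)$ is killed by the two endpoint values, the two endpoint tangential derivatives, and the $k-2$ tangential moments), whence Stokes' theorem yields $\int_f\rrot_f\vfieldu=\oint_{\partial f}\vfieldu\cdot\tvec=0$. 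Thus $\rrot_f$ maps $B_{k+1,\rrot_f}(f)$ into the zero-mean part $B_{k,0}(f)/P_0(f)$.

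Left exactness is immediate, since $\nabla_f p=0$ forces $p$ constant and a constant with vanishing vertex values is zero. The heart of the proof is middle exactness. Given $\vfieldu\in B_{k+1,\rrot_f}(f)$ with $\rrot_f\vfieldu=0$, exactness of \eqref{eq:polycomplex2Da} produces $p\in P_{k+2}(f)$, unique up to an additive constant, with $\nabla_f p=\vfieldu$. I would then show that, after fixing this constant, $p$ lands in $B_{k+2,\nabla_f}(f)$: on each edge $\partial_t p=\vfieldu\cdot\tvec=0$, so $p$ is edgewise constant, and since consecutive edges share vertices, $p$ is constant on all of $\partial f$; normalizing the free constant gives $p=0$ on $\partial f$, which annihilates \ref{enu:Honesa} (vertex values) and \ref{enu:Honesb}, while $\nabla_f p(\bx)=\vfieldu(\bx)=0$ and $\nabla_f^2 p(\bx)=\nabla_f\vfieldu(\bx)=0$ annihilate the remaining parts of \ref{enu:Honesa}. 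Hence $\ker\rrot_f|_{B_{k+1,\rrot_f}(f)}=\nabla_f B_{k+2,\nabla_f}(f)$.

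Right exactness then follows from a dimension count. From Lemma~\ref{lem:uniHone} and \eqref{eq:secondchoiceBO} one has $\dim B_{k+2,\nabla_f}(f)=\dim P_{k-1,0}(f)=(k+3)(k-2)/2$, from the interior degrees of freedom \ref{enu:Hrotvd} in the proof of Lemma~\ref{lem:unirot} one gets $\dim B_{k+1,\rrot_f}(f)=k^2-k-3$, and the target has dimension $\dim\bigl(B_{k,0}(f)/P_0(f)\bigr)=\dim P_{k-3}(f)-1=k(k-3)/2$. With left and middle exactness in hand, rank--nullity gives $\dim\rrot_f B_{k+1,\rrot_f}(f)=\dim B_{k+1,\rrot_f}(f)-\dim B_{k+2,\nabla_f}(f)=k(k-3)/2$, which equals the dimension of the target; since the image is contained in it, the map is onto, and the sequence is exact.

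I expect the main obstacle to be the normalization step in middle exactness, namely verifying that the potential $p$ supplied by the polynomial complex is, after fixing the additive constant, genuinely a $\nabla_f$-bubble — the key point being that the tangential trace condition $\vfieldu\cdot\tvec=0$ on $\partial f$ forces $p$ to be constant on $\partial f$ and hence, suitably normalized, to have vanishing \ref{enu:Honesa}--\ref{enu:Honesb}. A secondary subtlety is pinning down the quotient $B_{k,0}(f)/P_0(f)$ as the zero-mean subspace of $B_{k,0}(f)$, so that the Stokes obstruction $\int_f\rrot_f\vfieldu=0$ is exactly accounted for and the dimension bookkeeping closes.
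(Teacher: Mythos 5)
Your proposal is correct and follows essentially the same route as the paper: identify the kernel of $\rrot_f$ on $B_{k+1,\rrot_f}(f)$ with $\nabla_f B_{k+2,\nabla_f}(f)$, verify via \ref{enu:Hrotva}--\ref{enu:Hrotvc} and Stokes that the image lies in the zero-mean part of $B_{k,0}(f)$, and close with the identical dimension count. The only difference is one of presentation: where the paper cites the proof of Lemma~\ref{lem:unirot} (which already produces the potential in the form $\lambda_1\lambda_2\lambda_3 p$, manifestly a bubble), you re-derive the potential from the polynomial complex \eqref{eq:polycomplex2Da} and carry out the boundary normalization explicitly, which is the same underlying argument spelled out in more detail.
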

\begin{proof}
The proof of Lemma \ref{lem:unirot} leads to
$B_{k +1,\rrot_f}(f)\cap{\rm kerl}(\rrot_f)=\nabla_fB_{k +2,\nabla_f}(f)$.
This results in
\begin{align*}
{\rm dim}\rrot_fB_{k +1,\rrot_f}(f) &={\rm dim}B_{ k +1,\rrot_f,}(f)-{\rm dim} B_{k+2,\nabla_f}(f)  \\
&= \dim P_{k -3}(f)-1.
\end{align*}
 Since $B_{k ,0} (f)=\lambda_1\lambda_2\lambda_3P_{k -3}(f)$ and  the degrees of freedom \ref{enu:Hrotva}--\ref{enu:Hrotvc} imply
$ \rrot_f B_{k +1,\rrot_f} (f)\subseteq B_{k ,0} (f)/ P_0(f)$,
the combination with the previous identity concludes the proof.
\end{proof}

\bigskip
Given a bounded Lipschitz contractible polygonal domain $\omega\subset\R^2$, the finite elements with respect to the de Rham complex \eqref{eq:complex2Da} can be  constructed with  the above shape function spaces and degrees of freedom.  The   proof of the finite element de Rham complexes follows similar arguments as in \cite[Theorem 1]{CHK2018} and is omitted for brevity.
\subsection{Finite element  strain complexes in two dimensions}
\label{subsec:rotrot} This subsection constructs the finite elements with respect to the strain complex \eqref{eq:complex2Db}.

The shape function space  of   the $H^1$   vectorial conforming element  is
 $P_{k +2}(f;\R^2)$ with $k\geq 3$.  Recall the space $P_{k-1,0}(f)$ of polynomials degree $\leq k-1$ and  vanishing at vertices  of $f$ from \eqref{eq:secondchoiceBO}. 
Define $P_{k -1,0}(f;X):=\{\vfieldv\in P_{k-1}(f;X)\ |\ \text{each}$ $\text{component of $\vfieldv$ is in }P_{k-1,0}(f)\}$ for $X=\R^2$ or $\R^3$.
The degrees of freedom read
\begin{enumerate}
[label=(4\alph*)]
\item \label{enu:Honeva}
function value and first  order derivatives at each vertex $\bx$:
\begin{align*}
\vfieldu(\bx), \nabla_f \vfieldu(\bx),\nabla^2_f\vfieldu(\bx), 
\end{align*}
\item\label{enu:Honevb}
 moments of order $k-4$ of each component on each edge $e$:
\begin{align*}
\int_e \vfieldu\cdot \vfieldv,\quad \vfieldv\in P_{k -4}(e;\R^2),
\end{align*}
\item\label{enu:Honevc} the interior degrees of freedom in $f$ defined by
\begin{align*}
\int_f\vfieldu\cdot \vfieldv,\quad \vfieldv\in P_{k -1,0} (f;\R^2).
\end{align*}
\end{enumerate}

\begin{lemma}\label{lem:uniHonevec}
The degrees of freedom \ref{enu:Honeva}--\ref{enu:Honevc} are unisolvent for $P_{k +2}(f;\R^2)$.
\end{lemma}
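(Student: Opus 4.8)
The plan is to reduce the statement to the scalar unisolvence already established in Lemma~\ref{lem:uniHone}. The key observation is that the shape function space $P_{k+2}(f;\R^2)=P_{k+2}(f)\times P_{k+2}(f)$ together with all three groups of degrees of freedom \ref{enu:Honeva}--\ref{enu:Honevc} split componentwise into two independent copies of the scalar $H^1$ data \ref{enu:Honesa}--\ref{enu:Honesc}. Once this decoupling is made explicit, unisolvence follows at once by applying Lemma~\ref{lem:uniHone} to each component.

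Writing $\vfieldu=(u_1,u_2)^T$ with $u_1,u_2\in P_{k+2}(f)$, I would first check the vertex data \ref{enu:Honeva}: prescribing $\vfieldu(\bx)$, $\nabla_f\vfieldu(\bx)$ and $\nabla^2_f\vfieldu(\bx)$ is exactly prescribing the value, gradient and Hessian of each $u_i$ at $\bx$, i.e.\ two copies of \ref{enu:Honesa}. For the edge moments \ref{enu:Honevb}, the test space $P_{k-4}(e;\R^2)$ is the componentwise product $P_{k-4}(e)\times P_{k-4}(e)$; choosing $\vfieldv=(q,0)^T$ and $\vfieldv=(0,q)^T$ with $q\in P_{k-4}(e)$ recovers the scalar edge moments $\int_e u_iq$ of \ref{enu:Honesb} for each $i$. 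The interior moments \ref{enu:Honevc} split in the same way because $P_{k-1,0}(f;\R^2)$ is defined componentwise, so they reproduce \ref{enu:Honesc} for each $u_i$.

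With the decoupling in hand, the conclusion is immediate: if all functionals \ref{enu:Honeva}--\ref{enu:Honevc} vanish for $\vfieldu$, then each component $u_i\in P_{k+2}(f)$ has vanishing scalar degrees of freedom \ref{enu:Honesa}--\ref{enu:Honesc}, whence $u_i=0$ by Lemma~\ref{lem:uniHone} and therefore $\vfieldu=0$. The dimension count is equally automatic: the number of vector degrees of freedom is twice the scalar count, that is $2\dim P_{k+2}(f)=\dim P_{k+2}(f;\R^2)=(k+3)(k+4)$, matching the shape function space. There is no genuine obstacle here; the only point requiring care is to confirm that each DOF test space --- in particular $P_{k-4}(e;\R^2)$ and the componentwise space $P_{k-1,0}(f;\R^2)$ from the preceding definition --- really separates the two scalar components, which the definitions guarantee. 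In contrast to the symmetric matrix-valued elements appearing later in the strain complex, where the symmetry constraint couples the entries, this vector element carries no such coupling, so the componentwise reduction is exact.
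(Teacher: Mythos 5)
Your proof is correct and follows essentially the same route as the paper, whose entire proof of Lemma~\ref{lem:uniHonevec} is the one-line remark that it ``follows the same arguments as in Lemma~\ref{lem:uniHone}''. Your componentwise decoupling of the degrees of freedom \ref{enu:Honeva}--\ref{enu:Honevc} into two copies of \ref{enu:Honesa}--\ref{enu:Honesc} simply makes that reduction to the scalar case explicit and rigorous, including the matching dimension count.
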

\begin{proof}
The proof follows the same arguments as in   Lemma~\ref{lem:uniHone}.
\end{proof}

\bigskip
The shape function space  of the $H(\rrot_f\rrot_f,\Sbb_2)$ element is $P_{k +1}(f;\mathbb{S}_2)$
with $k \geq 3$.  Define
\begin{align}\label{eq:rotrotbub}
\begin{aligned}
P_{k -1,2}(f;\R^2):=\{&\vfieldv\in P_{k -1}(f;\R^2)\ |\ \text{there exists some } \vfieldw\in P_{k -1,0}(f;\R^2)\\
& \text{such that }\ddiv_f(   \sym(\vfieldv\bx^T))=\vfieldw \}.
\end{aligned}
\end{align} The bijection  $\ddiv_f: \sym(P_{k-1}(f;\R^2)\bx^T)\rightarrow P_{k-1}(f;\R^2)$ guarantees   $\dim  P_{k -1,2}(f;\R^2)=\dim P_{k-1,0}(f;\R^2)$ (see \cite[Lemma~3.6]{ChenHuang20202D} up to a rotation).
 The   degrees of freedom are given by 
\begin{enumerate}
[label=(5\alph*)]
\item \label{enu:Hrotrotta}
function value and first  order derivatives at each vertex $\bx$:
\begin{align*}
\mfieldt(\bx),\nabla_f\mfieldt(\bx),
\end{align*}
\item\label{enu:Hrotrottb}
 moments of order $\leq k-3$ of the tangential tangential component on each edge $e$:
\begin{align*}
\int_e \tvec^T\mfieldt\tvec q,\quad q\in P_{k -3}(e),
\end{align*}
\item\label{enu:Hrotrottc}
 moments   of order $\leq k-2$  of the following derivative on each edge $e$:
\begin{align*}
\int_e (-\partial_t(\nvec^T\mfieldt\tvec)+\tvec^T\rrot_f\mfieldt)  q,\quad q\in P_{k -2}(e),
\end{align*}
\item\label{enu:Hrotrottd} the interior degrees of freedom in $f$ defined by
\begin{align*}
\int_f\mfieldt:\mfieldxi,\quad \mfieldxi\in \ccurl_f \ccurl_f P_{k -1}(f)+\sym( P_{k -1,2} (f;\R^2)\bx^T).
\end{align*}
\end{enumerate}
\begin{remark}
The above finite element space  is a modified finite element space of $H(\rrot\rrot)$ as in \cite{ChenHuang20202D} with additional  continuity  at vertices.
\end{remark}
\begin{lemma}\label{lem:unisorotrot}
The degrees of freedom \ref{enu:Hrotrotta}--\ref{enu:Hrotrottd} are unisolvent for $P_{k +1}(f;\mathbb{S}_2)$.
\end{lemma}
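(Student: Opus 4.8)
The plan is to mimic the unisolvence argument of Lemma~\ref{lem:unirot}, with the tangential trace $\vfieldu\cdot\tvec$ and $\rrot_f\vfieldu$ replaced by the two boundary traces $q_1:=\tvec^T\mfieldt\tvec$ and $q_2:=-\partial_t(\nvec^T\mfieldt\tvec)+\tvec^T\rrot_f\mfieldt$ that appear in the Green identity for $\rrot_f\rrot_f$. First I would check the dimension count: \ref{enu:Hrotrotta} contributes $9$ per vertex, \ref{enu:Hrotrottb} and \ref{enu:Hrotrottc} contribute $3(k-2)$ and $3(k-1)$, and \ref{enu:Hrotrottd} contributes $(\dim P_{k-1}(f)-3)+\dim P_{k-1,0}(f;\R^2)$; here I use that $\ccurl_f\ccurl_f$ annihilates exactly $P_1(f)$ and the bijection noted after \eqref{eq:rotrotbub}. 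These sum to $\tfrac32(k+2)(k+3)=\dim P_{k+1}(f;\Sbb_2)$, so it suffices to show that vanishing of \ref{enu:Hrotrotta}--\ref{enu:Hrotrottd} forces $\mfieldt=0$.

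Step one is an edge-by-edge Hermite argument. On an edge $e$, the polynomial $q_1\in P_{k+1}(e)$ is fixed by its two endpoint values and tangential derivatives (both read off from \ref{enu:Hrotrotta}) together with the $k-2$ moments in \ref{enu:Hrotrottb}, while $q_2\in P_k(e)$ is fixed by its two endpoint values (again available from \ref{enu:Hrotrotta}, since both $\partial_t(\nvec^T\mfieldt\tvec)$ and $\tvec^T\rrot_f\mfieldt$ are first-order quantities at the vertices) and the $k-1$ moments in \ref{enu:Hrotrottc}. Hence $q_1=q_2=0$ on $\partial f$. Step two recovers $\rrot_f\rrot_f\mfieldt=0$: using the Green identity for $\rrot_f\rrot_f$ as in \cite{ChenHuang20202D} (up to a rotation), whose boundary contribution over $\partial f$ is a combination of $q_1$, $q_2$ tested against traces of $q$ plus vertex corner terms built from $\nvec^T\mfieldt\tvec$ at the vertices, all of which vanish by Step one and \ref{enu:Hrotrotta}. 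Pairing against $\ccurl_f\ccurl_f q$ with $q\in P_{k-1}(f)$, the first half of \ref{enu:Hrotrottd} yields $\int_f(\rrot_f\rrot_f\mfieldt)q=0$ for all $q\in P_{k-1}(f)$, so $\rrot_f\rrot_f\mfieldt=0$. By exactness of the polynomial strain complex \eqref{eq:polycomplex2Db} there is $\vfieldw\in P_{k+2}(f;\R^2)$, unique modulo $\ker\epsilon_f$, with $\mfieldt=\epsilon_f(\vfieldw)$.

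The main obstacle is the final step, showing $\vfieldw$ is a rigid motion. The key computation is $\rrot_f\epsilon_f(\vfieldw)=\tfrac12\nabla_f(\rrot_f\vfieldw)$, from which a short frame calculation gives $q_1=\partial_t(\vfieldw\cdot\tvec)$ and $q_2=-\partial_t^2(\vfieldw\cdot\nvec)$; thus $q_1=q_2=0$ forces $\vfieldw\cdot\tvec$ to be constant and $\vfieldw\cdot\nvec$ to be affine on each edge. After subtracting a rigid motion so that $\vfieldw$ and $\nabla_f\vfieldw$ vanish at one vertex (the symmetric part of $\nabla_f\vfieldw$ there is already zero by $\mfieldt=\epsilon_f(\vfieldw)$ and \ref{enu:Hrotrotta}), I would propagate this normalization around $\partial f$: on each edge the constant/affine traces together with the endpoint conditions $\epsilon_f(\vfieldw)=0$ from \ref{enu:Hrotrotta} force $\vfieldw\equiv0$ on that edge and $\nabla_f\vfieldw=0$ at the next vertex, so $\vfieldw=0$ on all of $\partial f$ and $\vfieldw=\lambda_1\lambda_2\lambda_3\boldsymbol{p}$ with $\boldsymbol{p}\in P_{k-1}(f;\R^2)$.

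Finally I would identify $\boldsymbol{p}$. Because $\lambda_1\lambda_2\lambda_3$ vanishes to second order at each vertex while its Hessian there is nondegenerate, the vanishing of $\nabla_f\mfieldt$ at the vertices in \ref{enu:Hrotrotta} forces $\boldsymbol{p}(\bx_i)=0$, i.e.\ $\boldsymbol{p}\in P_{k-1,0}(f;\R^2)$; this is exactly where the extra vertex continuity is used. Integrating $\int_f\epsilon_f(\vfieldw):\sym(\vfieldv\bx^T)$ by parts (the boundary term drops since $\vfieldw=0$ on $\partial f$) turns the second half of \ref{enu:Hrotrottd} into $\int_f\lambda_1\lambda_2\lambda_3\,\boldsymbol{p}\cdot\ddiv_f(\sym(\vfieldv\bx^T))=0$; since $\ddiv_f(\sym(\,\cdot\,\bx^T))$ maps $P_{k-1,2}(f;\R^2)$ onto $P_{k-1,0}(f;\R^2)$ by \eqref{eq:rotrotbub}, choosing $\vfieldv$ with $\ddiv_f(\sym(\vfieldv\bx^T))=\boldsymbol{p}$ gives $\int_f\lambda_1\lambda_2\lambda_3|\boldsymbol{p}|^2=0$, whence $\boldsymbol{p}=0$ and $\mfieldt=0$. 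I expect the trace identification $q_2=-\partial_t^2(\vfieldw\cdot\nvec)$ and the around-the-boundary propagation of the vertex normalization to be the most delicate points.
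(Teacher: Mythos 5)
Your proposal is correct and takes essentially the same route as the paper: dimension count, reduction of a DOF-annihilated $\mfieldt$ to the bubble form $\epsilon_f(\lambda_1\lambda_2\lambda_3\vfieldu)$, the observation that vanishing of $\nabla_f\mfieldt$ at vertices forces $\vfieldu\in P_{k-1,0}(f;\R^2)$, and the final integration by parts against $\sym(P_{k-1,2}(f;\R^2)\bx^T)$ using the bijection behind \eqref{eq:rotrotbub}. The only difference is that the paper compresses the middle portion (vanishing of the edge traces $\tvec^T\mfieldt\tvec$ and $-\partial_t(\nvec^T\mfieldt\tvec)+\tvec^T\rrot_f\mfieldt$, the conclusion $\rrot_f\rrot_f\mfieldt=0$, the polynomial potential, and the propagation of the boundary normalization) into a citation of \cite[Lemma~3.8]{ChenHuang20202D}, whereas you reconstruct those steps explicitly, and your reconstruction (including the trace identities $q_1=\partial_t(\vfieldw\cdot\tvec)$, $q_2=-\partial_t^2(\vfieldw\cdot\nvec)$) is sound.
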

 \begin{proof}A direction computation shows that the number of the degrees of freedom is equal to the dimension $P_{k +1}(f;\mathbb{S}_2)$ with
\begin{align*}
27+3(k -2)+3(k -1)+\frac{3 k  (k +1)}{2}-9=\frac{3(k +2)(k +3)}{2}.
\end{align*} Suppose that \ref{enu:Hrotrotta}--\ref{enu:Hrotrottd} vanish  for some $\mfieldt\in P_{k +1}(f;\mathbb{S}_2)$.
Similar arguments  as in \cite[Lemma~3.8]{ChenHuang20202D} show
 \begin{align*}
 \mfieldt=\epsilon_f(\lambda_1\lambda_2\lambda_3 \vfieldu)
 \end{align*}
 for some $\vfieldu\in P_{k -1}(f;\R^2)$. Since the first order derivatives of $\mfieldt$ vanish at each vertex, this shows  $\vfieldu\in P_{k-1,0}(f;\R^2)$.   For any $\mfieldxi\in \sym(P_{k-1,2}(f;\R^2)\bx^T)$, an integration by parts leads to
 \begin{align*}
 \int_f\mfieldt:\mfieldxi=-\int_f\lambda_1\lambda_2\lambda_3 \vfieldu\cdot\ddiv_f\mfieldxi.
 \end{align*}
The combination with vanishing  \ref{enu:Hrotrottd} and \eqref{eq:rotrotbub}  leads to $\vfieldu=0$. This concludes the proof.
 \end{proof}

\bigskip
Let $B_{k +2, \epsilon_f}(f)$ denote  the $H^1$ vectorial bubble function space of $P_{k+2}(f;\R^2)$ with vanishing degrees of freedom \ref{enu:Honeva}--\ref{enu:Honevc}. Let $B_{ k +1,\rrot_f\rrot_f}(f)$ denote  the $H(\rrot_f\rrot_f,\Sbb_2)$ bubble function space of $P_{k+1}(f;\Sbb_2)$ with vanishing degrees of freedom \ref{enu:Hrotrotta}--\ref{enu:Hrotrottc}. The bubble function spaces form   exact complexes  as in the following lemma.
\begin{lemma}\label{thm:facerotrotseq}Suppose $k \geq 3$.
The complexes
\begin{align*}
0\xrightarrow[]{ \subset}B_{k +2, \epsilon_f}(f) \xrightarrow[]{ \epsilon_f}B_{ k +1,\rrot_f\rrot_f}(f)\xrightarrow[]{ \rrot_f\rrot_f} P_{k -1}(f)/ P_1(f)\rightarrow 0 
\end{align*}
are exact.
\end{lemma}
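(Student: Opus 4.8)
The plan is to follow the template of the de~Rham bubble lemma proved just above and to verify exactness at each of the three slots separately. Throughout, $B_{k+2,\epsilon_f}(f)$ denotes the $\vfieldu\in P_{k+2}(f;\R^2)$ that annihilate the boundary degrees of freedom \ref{enu:Honeva}--\ref{enu:Honevb}, and $B_{k+1,\rrot_f\rrot_f}(f)$ the $\mfieldt\in P_{k+1}(f;\Sbb_2)$ that annihilate \ref{enu:Hrotrotta}--\ref{enu:Hrotrottc}. By the unisolvence results in Lemma~\ref{lem:uniHonevec} and Lemma~\ref{lem:unisorotrot} the dimension of each bubble space equals its number of interior degrees of freedom, so that $\dim B_{k+2,\epsilon_f}(f)=\dim P_{k-1,0}(f;\R^2)=k^2+k-6$ and $\dim B_{k+1,\rrot_f\rrot_f}(f)=\frac{3k(k+1)}{2}-9$. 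These counts will drive the surjectivity at the last slot.

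Exactness at the first slot is the injectivity of $\epsilon_f$ on $B_{k+2,\epsilon_f}(f)$: if $\epsilon_f\vfieldu=0$, then $\vfieldu$ is a rigid-body motion $\boldsymbol a+W\bx$ with $W$ a constant skew-symmetric matrix, and the vanishing of $\vfieldu$ and $\nabla_f\vfieldu$ at a vertex in \ref{enu:Honeva} forces $W=0$ and then $\boldsymbol a=0$. For the middle slot I will establish
\begin{align*}
\ker(\rrot_f\rrot_f)\cap B_{k+1,\rrot_f\rrot_f}(f)=\epsilon_f\big(B_{k+2,\epsilon_f}(f)\big).
\end{align*}
The inclusion ``$\supseteq$'' combines $\rrot_f\rrot_f\epsilon_f=0$ with the claim that $\epsilon_f$ carries bubbles to bubbles. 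The latter follows from the edgewise trace reductions
\begin{align*}
\tvec^T\epsilon_f(\vfieldu)\tvec&=\partial_t(\vfieldu\cdot\tvec),\\
-\partial_t(\nvec^T\epsilon_f(\vfieldu)\tvec)+\tvec^T\rrot_f\epsilon_f(\vfieldu)&=-\partial_t^2(\vfieldu\cdot\nvec),
\end{align*}
which show that the vanishing of \ref{enu:Honeva}--\ref{enu:Honevb} (forcing $\vfieldu|_{\partial f}=0$ together with the vertex jets of $\vfieldu$) makes \ref{enu:Hrotrotta}--\ref{enu:Hrotrottc} vanish for $\epsilon_f(\vfieldu)$. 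The inclusion ``$\subseteq$'' is read off from the unisolvence argument of Lemma~\ref{lem:unisorotrot}, which writes any $\mfieldt$ in the left-hand side as $\mfieldt=\epsilon_f(\lambda_1\lambda_2\lambda_3\vfieldu)$ with $\vfieldu\in P_{k-1,0}(f;\R^2)$; since $\vfieldu$ vanishes at the vertices, $\lambda_1\lambda_2\lambda_3\vfieldu$ has vanishing value, gradient and Hessian at every vertex and vanishes on $\partial f$, hence lies in $B_{k+2,\epsilon_f}(f)$.

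For the last slot, a Green identity for $\rrot_f\rrot_f$, whose boundary contributions are precisely the edge traces appearing in \ref{enu:Hrotrottb}--\ref{enu:Hrotrottc}, gives $\int_f(\rrot_f\rrot_f\mfieldt)\,q=\int_f\mfieldt:\ccurl_f\ccurl_f q$ for $\mfieldt\in B_{k+1,\rrot_f\rrot_f}(f)$; since $\ccurl_f\ccurl_f q=0$ for $q\in P_1(f)$, the image $\rrot_f\rrot_f\mfieldt$ is orthogonal to $P_1(f)$, so $\rrot_f\rrot_f$ does map $B_{k+1,\rrot_f\rrot_f}(f)$ into $P_{k-1}(f)/P_1(f)$. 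Combining the rank--nullity theorem, the middle-slot identity and the injectivity of $\epsilon_f$,
\begin{align*}
\dim\rrot_f\rrot_f B_{k+1,\rrot_f\rrot_f}(f)=\dim B_{k+1,\rrot_f\rrot_f}(f)-\dim B_{k+2,\epsilon_f}(f)=\frac{k(k+1)}{2}-3,
\end{align*}
which equals $\dim\big(P_{k-1}(f)/P_1(f)\big)$, so $\rrot_f\rrot_f$ is onto and the complex is exact. The main obstacle is the middle slot: the two trace reductions that make $\epsilon_f$ preserve the bubble conditions (the ``$\supseteq$'' direction) are the only genuinely computational point, since they hinge on the specific combination of derivatives chosen in \ref{enu:Hrotrottc}; once these and the reuse of Lemma~\ref{lem:unisorotrot} are in hand, the left-slot injectivity and the final dimension bookkeeping are routine.
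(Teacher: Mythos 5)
Your proposal is correct and takes essentially the same route as the paper: the middle-slot identity $B_{k+1,\rrot_f\rrot_f}(f)\cap\ker(\rrot_f\rrot_f)=\epsilon_f\big(B_{k+2,\epsilon_f}(f)\big)$ extracted from the unisolvence argument of Lemma~\ref{lem:unisorotrot}, followed by the rank--nullity dimension count $\dim B_{k+1,\rrot_f\rrot_f}(f)-\dim B_{k+2,\epsilon_f}(f)=\frac{k(k+1)}{2}-3=\dim P_{k-1}(f)/P_1(f)$. The additional details you supply (the edge trace identities showing $\epsilon_f$ carries bubbles to bubbles, which are indeed correct under the paper's orientation convention $\tvec=(-n_2,n_1)^T$, and the Green-identity orthogonality to $P_1(f)$) simply make explicit what the paper delegates to the citation of \cite[Lemma 3.9]{ChenHuang20202D}.
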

\begin{proof}The proof 
follows similar arguments as in \cite[Lemma 3.9]{ChenHuang20202D}. Lemma \ref{lem:unisorotrot} shows $B_{ k +1,\rrot_f\rrot_f}(f)\cap{\rm ker}(\rrot_f\rrot_f)=\epsilon_f(B_{k +2, \epsilon_f}(f))$. This also means
\begin{align*}
\dim (\rrot_f\rrot_f B_{ k +1,\rrot_f\rrot_f}(f))=&\dim B_{ k +1,\rrot_f\rrot_f}(f)-\dim B_{k +2, \epsilon_f}(f)\\
=&\frac{1}{2}k (k +1)-3=\dim P_{k -1}(f)/ P_1(f).
\end{align*}
\end{proof}

Given a bounded Lipschitz contractible polygonal domain $\omega\subset\R^2$, the finite elements with respect to the strain complex \eqref{eq:complex2Db} can be  constructed with  the above shape function spaces and degrees of freedom.  The proof of  the  finite element strain complexes follows similar arguments as in \cite[Lemma 3.10]{ChenHuang20202D} and is omitted here. A rotation will lead to the finite element  $\ddiv\ddiv$ complexes in two dimensions which are modified ones from \cite{ChenHuang20202D}.
\section{Finite  element spaces in three dimensions}\label{sec:FEM3D}
This section constructs a family of   $H(\sym\ccurl,\Tbb)$ tensor conforming finite elements on tetrahedral  grids. The definitions of the degrees of freedom on faces will employ the polynomial spaces defined in Section~\ref{sec:FEM2D}. Although those spaces are defined when $f$ is on the $x-y$ plane, they can be extended to a three dimensional face $f$ with $\bx$ replaced by $\Pif\bx$ and $\bx^\perp$ replaced by $\nvec\times\bx$. It is proved that  the $H(\sym\ccurl,\Tbb)$ tensor conforming finite elements form  finite element $\ddiv\ddiv$ complexes  with the $H(\ddiv\ddiv,\Sbb)$ tensor conforming elements in \cite{ChenHuang20203D} and the newly constructed $H^1$ vectorial conforming elements in this paper.
\subsection{Further notation for three dimensions}
The space $RT$  in three dimensions reads as
\begin{align*}
    RT:=\{a+b\bx\ |\ a\in\R^3,b\in\R\}.
\end{align*}
Suppose that  $\Omega $ is a bounded, strong Lipschitz and contractible domain. Define
\begin{align*}
    H(\sym\ccurl,\Omega;\Tbb):=\{\mfieldt\in L^2(\Omega;\Tbb)\ |\ \sym\ccurl\mfieldt\in L^2(\Omega;\Sbb)\},\\
    H(\ddiv\ddiv,\Omega;\Sbb):=\{\mfieldt\in L^2(\Omega;\Sbb)\ |\ \ddiv\ddiv\mfieldt \in L^2(\Omega)\}.
\end{align*}
Recall the  $\ddiv\ddiv$ complex in three dimensions   \cite{arnold2021complexes,PaulyZ2020}   from \eqref{eq:complex3Dintro}
\begin{align}\label{eq:complex3D}\Scale[0.9]{
RT\xrightarrow[]{ \subset}H^1(\Omega;\R^3) \xrightarrow[]{ \ddev\nabla}H(\sym\ccurl,\Omega;\Tbb) \xrightarrow[]{\sym\ccurl}H(\ddiv\ddiv,\Omega;\Sbb) \xrightarrow[]{ \ddiv\ddiv}L^2(\Omega) \rightarrow 0,}
\end{align}
and the  polynomial complex \cite{ChenHuang20203D}  
\begin{align}\label{eq:complex3Dpol}
RT\xrightarrow[]{ \subset}P_{k+2}(\Omega;\R^3) \xrightarrow[]{ \ddev\nabla}P_{k+1}(\Omega;\Tbb)\xrightarrow[]{\sym\ccurl}P_{k}(\Omega;\Sbb) \xrightarrow[]{ \ddiv\ddiv}P_{k-2}(\Omega)\rightarrow 0.
\end{align}

Let $\T$ be a shape regular triangulation of $\Omega\subset\R^3$ into tetrahedra. Let $\Vcal$ denote the set of all vertices, $\Ecal$ the set of all edges and $\Fcal$ the set of all faces. Given $e\in\Ecal$,   let $\tvec$ denote the unit tangential vector along $e$, and let $\nvec_1$ and  $\nvec_2$ denote two independent unit normal vectors such that $\nvec_1\times \nvec_2=\tvec$. Given $f\in\Fcal$, let $\nvec$ denote the unit normal vector of $f$,  and let  $\nvec_{\partial f}$ denote the outnormal vector of $\partial f$ on $f$ and $\tvec_{\partial f}$ denote the unit tangential vector of $\partial f$ such that $\nvec_{\partial f}\times \tvec_{\partial f}=\nvec$.  Given $K\in\T$, let $\lambda_j$ with $1\leq j\leq 4$ denote the barycentric coordinates of $K$.


\subsection{$H(\sym\ccurl,\Tbb)$ conforming finite elements}\label{sec:curl}Recall the space $\Tbb=\{\mfieldt\in\R^{3\times 3}\ |\ \tr(\mfieldt)=0\}$ of trace free matrices.  The shape function space of  the $H(\sym\ccurl,\Tbb)$ element  is  $P_{k +1}(K;\Tbb)$    with $k \geq 3$. Recall the   operators $\Pif$ and $\Pifsym$  from Section \ref{sec:pre} for face $f\in\Fcal$ with the unit normal vector $\nvec$. The degrees of freedom on the faces require the polynomial spaces $P_{k-1,0}(f)$ from \eqref{eq:secondchoiceBO}, $P_{k-1,1}(f)$ from  \eqref{eq:vecfacebsec} and $P_{k -1,2}(f;\R^2)$ from \eqref{eq:rotrotbub}, which are extended  to  three dimensional faces as follows 
\begin{align*}
\begin{aligned}
 P_{k-1 ,1} (f) :=  \{q\in P_{k -1}(f)\ |\ \text{there exists  some }r\in P_{k -1,0} (f)\text{ such that }\ddiv_f(q\Pif\bx)=r \} ,
\end{aligned}
\end{align*}
and 
\begin{align*}
P_{k -1,2}(f;\R^2):=&\{\vfieldv\in \Pif P_{k -1}(f;\R^3)\ |\ \text{there exists  some }\vfieldw\in\Pif P_{k -1,0}(f;\R^3)\\
&\text{ such that }\ddiv_f(   \sym(\vfieldv(\Pif\bx)^T))=\vfieldw\}.
\end{align*}

The degrees of freedom of the $H(\sym\ccurl,\Tbb)$ element are defined as follows
\begin{enumerate}[label=(7\alph*)]
\item \label{enu:HScurla} function value and  first  order derivatives of each component at each vertex $\bx\in\Vcal$:
\begin{align*}
\mfieldt(\bx),\nabla\mfieldt (\bx) ,
\end{align*}
\item\label{enu:HScurlb} moments of order $\leq k-3$ of the following components on each edge $e\in \Ecal$:
\begin{align*}
\int_e \nvec_{i}^T \mfieldt\tvec q, \quad q\in P_{k -3}(e), i=1,2,
\end{align*}
\item\label{enu:HScurlc} moments of order $\leq k-2$ of the following  derivatives on each edge $e\in \Ecal$:
\begin{align*}
\int_e \nvec_{i}^T\sym\ccurl\mfieldt  \nvec_{j} q,\quad q\in P_{k -2}(e),i,j=1,2,\\
\int_e\big(\nvec_{1}^T\ccurl\mfieldt \nvec_{2} -\partial_t(\tvec^T\mfieldt\tvec)\big)q,\quad q\in P_{k -2}(e),
\end{align*}
\item \label{enu:HScurld} degrees of freedom on each face $f\in \Fcal$ defined by
\begin{align*}
\int_f\Pif (\mfieldt^T \nvec)\cdot  \vfieldv,\quad \vfieldv\in \ccurl_f P_{k-3}(f)+P_{k -1,1}(f)\Pif\bx,
\end{align*}
\item \label{enu:HScurle} degrees of freedom on each face $f\in \Fcal$ defined by
\begin{align*}
\int_f   \Pifsym(\mfieldt\times \nvec) :\mfieldxi,\quad \mfieldxi\in\ccurl_f \ccurl_f P_{k -1}(f)+\sym( P_{k -1,2}(f;\R^2)(\Pif\bx)^T),
\end{align*}
\item\label{enu:HScurlf} interior degrees of freedom in each element $K\in\Tcal$ defined by
\begin{align*}
&\int_K \sym\ccurl\mfieldt: \mfieldxi, \quad\mfieldxi\in  \sym(\bx\times P_{k -2}(K;\Tbb)),\\
&\int_{f_1}\sym\ccurl\mfieldt\nvec\cdot \vfieldv,\quad \vfieldv\in P_{k -2}(f_1)(\nvec\times\bx) \text{ for an arbitrarily but fixed face }f_1,\\
&\int_{K}\mfieldt:\mfieldxi,\quad \mfieldxi\in\ddev(P_{k -2}(K;\R^3)
\bx^T).
\end{align*}
\end{enumerate}
\begin{remark}
The first degrees of freedom in \ref{enu:HScurlc} plus \ref{enu:HScurla} imply the continuity of $\nvec_i^T\sym\ccurl\mfieldt\nvec_j^T$. This, the second degrees of freedom   in \ref{enu:HScurlc} plus \ref{enu:HScurla} imply  the continuity of $\nvec_{1}^T\ccurl \mfieldt \nvec_{2} -\partial_t(\tvec^T\mfieldt \tvec)$ and $\nvec_{2}^T\ccurl \mfieldt \nvec_{1} +\partial_t(\tvec^T\mfieldt \tvec)$. Besides, they are independent of the choices of the normal vectors. Suppose that there are another two unit normal vectors $\nvec_1^\prime=c_1\nvec_1+c_2\nvec_2$ and $\nvec_2^\prime=-c_2\nvec_1+c_1\nvec_2$ such that $\nvec_1^\prime\times\nvec_2^\prime=\tvec$. An elementary  computation leads to the continuity of $(\nvec_i^\prime)^T\sym\ccurl \mfieldt \nvec_{j}^\prime$. The continuity of $(\nvec_1^\prime)^T\ccurl\mfieldt  \nvec_2^\prime-\partial_t(\tvec^T\mfieldt \tvec)$ follows from
\begin{align*}
(\nvec_1^\prime)^T\ccurl\mfieldt  \nvec_2^\prime-\partial_t(\tvec^T\mfieldt \tvec)=&-c_1c_2\nvec_1^T\ccurl\mfieldt  \nvec_1+c_1c_2\nvec_2^T\ccurl\mfieldt \nvec_2\\
&+c_1^2\nvec_1^T\ccurl\mfieldt  \nvec_2-c_2^2 \nvec_2^T\ccurl\mfieldt  \nvec_1-(c_1^2+c_2^2)\partial_t(\tvec^T\mfieldt \tvec).
\end{align*} The first two degrees of freedom  in \ref{enu:HScurlf} are motivated from those of the $H(\ddiv\ddiv,\Sbb)$ conforming finite elements in \cite{ChenHuang20203D}.
\end{remark}

The proof of unisolvence of the degrees of freedom \ref{enu:HScurla}-\ref{enu:HScurlf}  requires the following  three lemmas. The identities in the first two lemmas present  the restrictions of functions and operators on faces and show some connections with the finite elements in two dimensions from Section \ref{sec:FEM2D}. The third lemma from \cite{chen2020discrete} will be used to deal with interior degrees of freedom.
\begin{lemma}\label{lem:facetoedge}
Given $f$ with two unit tangential  vectors $\tvec_1$ and $\tvec_2$ such that $\tvec_{1}\times \tvec_{2} =\nvec$,  it holds that
\begin{align}\label{eq:restone}
\Pif(\mfieldt^T\nvec)\cdot\tvec_2=\nvec^T\mfieldt\tvec_2,\\\label{eq:resttwo}
\rrot_f\Pif(\mfieldt^T\nvec)=\nvec^T\ccurl\mfieldt\nvec,\\\label{eq:restthree}
\tvec_{2}^T \Pifsym(\mfieldt\times\nvec)\tvec_{2}=-\tvec_{1}^T\mfieldt\tvec_2,\\\label{eq:restfour}
-\partial_{t_2}(\tvec_1^T\Pifsym(\mfieldt\times\nvec) \tvec_{2})\hspace{-0.5mm}+\hspace{-0.5mm}\tvec^T_{2}\rrot_f \Pifsym(\mfieldt\times\nvec)
=& -\tvec_{1}^T\ccurl\mfieldt \nvec\hspace{-0.5mm}-\partial_{t_{2}}\big( \tvec_{2}^T \mfieldt \tvec_{2}). \hspace{-0.5mm}
\end{align}
\end{lemma}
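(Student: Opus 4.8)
The plan is to exploit that a face $f$ of a tetrahedron is flat, so the adapted frame $\{\tvec_1,\tvec_2,\nvec\}$ is \emph{constant} along $f$ and all of the operators appearing ($\ccurl$, $\rrot_f$, the tangential derivatives $\partial_{t_i}$, and the projections $\Pif$, $\Pifsym$) are built from $\nabla$ together with algebraic cross and dot products, hence transform covariantly under a rigid rotation of coordinates. Consequently it suffices to verify each of the four identities in the single distinguished frame $\nvec=(0,0,1)^T$, $\tvec_1=(1,0,0)^T$, $\tvec_2=(0,1,0)^T$, in which $f$ is the $x$--$y$ plane and the surface operators reduce to the standard two-dimensional operators in the variables $(x,y)$ recalled in Section~\ref{sec:FEM2D}.

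For \eqref{eq:restone} I would simply note that $\Pif$ is the orthogonal projection onto the tangent plane, so $\Pif\vfieldv\cdot\tvec_2=\vfieldv\cdot\tvec_2$ for every $\vfieldv$ (since $\tvec_2\perp\nvec$); applying this to $\vfieldv=\mfieldt^T\nvec$ gives $\Pif(\mfieldt^T\nvec)\cdot\tvec_2=(\mfieldt^T\nvec)\cdot\tvec_2=\nvec^T\mfieldt\tvec_2$. For \eqref{eq:resttwo} I would use the surface identity $\rrot_f\Pif\vfieldw=\nvec\cdot\ccurl\vfieldw$ (valid because in the flat frame $\nvec\cdot\ccurl\vfieldw=\partial_x w_2-\partial_y w_1$ involves only the tangential data of $\vfieldw$), combined with the commuting relation $\ccurl(\mfieldt^T\nvec)=(\ccurl\mfieldt)^T\nvec$ from Section~\ref{sec:pre}; since the result is a scalar and therefore transpose-invariant, $\nvec^T(\ccurl\mfieldt)^T\nvec=\nvec^T\ccurl\mfieldt\nvec$.

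The computational core lies in \eqref{eq:restthree} and \eqref{eq:restfour}. The first step is to write the column-wise product $\mfieldt\times\nvec$ explicitly in the distinguished frame: its $j$-th column is $(\mfieldt_{1j},\mfieldt_{2j},\mfieldt_{3j})^T\times e_3$, so the $(i,j)$ entry equals $\mfieldt_{2j}\delta_{i1}-\mfieldt_{1j}\delta_{i2}$. Applying $\Pif$ (which annihilates the third component of each row) and then $\sym$ yields a $2\times2$ tangential symmetric block with diagonal entries $(\mfieldt_{21},-\mfieldt_{12})$ and off-diagonal entry $\frac{1}{2}(\mfieldt_{22}-\mfieldt_{11})$. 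Identity \eqref{eq:restthree} is then just the $(2,2)$ entry, $-\mfieldt_{12}=-\tvec_1^T\mfieldt\tvec_2$. For \eqref{eq:restfour} I would compute the row-wise surface rot of this block, with $i$-th component $\partial_x(\Pifsym(\mfieldt\times\nvec))_{i2}-\partial_y(\Pifsym(\mfieldt\times\nvec))_{i1}$, take its $\tvec_2$-component, add $-\partial_{t_2}$ of the off-diagonal entry, and match the result term by term against $-\tvec_1^T\ccurl\mfieldt\nvec-\partial_{t_2}(\tvec_2^T\mfieldt\tvec_2)$, using that the third column of $\ccurl\mfieldt$ has entries $\partial_x\mfieldt_{i2}-\partial_y\mfieldt_{i1}$.

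I expect \eqref{eq:restfour} to be the main obstacle: one has to keep careful track of the factors of $\frac{1}{2}$ introduced by the symmetrization and check that, after forming the surface rot and the tangential derivative, the $\partial_y\mfieldt_{11}$, $\partial_y\mfieldt_{22}$ and $\partial_x\mfieldt_{12}$ contributions combine to exactly $-\partial_x\mfieldt_{12}+\partial_y\mfieldt_{11}-\partial_y\mfieldt_{22}$ on both sides. The bookkeeping is delicate but elementary; the appeal to rotation-covariance at the outset is precisely what legitimizes carrying out the whole computation in the single convenient frame rather than in the general one.
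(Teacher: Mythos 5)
Your proposal is correct, and the cancellation you predict in \eqref{eq:restfour} does check out: with your entries for $\Pifsym(\mfieldt\times\nvec)$ in the standard frame (diagonal $(\mfieldt_{21},-\mfieldt_{12})$, off-diagonal $\tfrac{1}{2}(\mfieldt_{22}-\mfieldt_{11})$), the left-hand side becomes $-\partial_y\tfrac{1}{2}(\mfieldt_{22}-\mfieldt_{11})+\partial_x(-\mfieldt_{12})-\partial_y\tfrac{1}{2}(\mfieldt_{22}-\mfieldt_{11})=\partial_y\mfieldt_{11}-\partial_y\mfieldt_{22}-\partial_x\mfieldt_{12}$, which is exactly $-\tvec_1^T\ccurl\mfieldt\nvec-\partial_{t_2}(\tvec_2^T\mfieldt\tvec_2)$. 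Your route is, however, genuinely different from the paper's for \eqref{eq:restthree}--\eqref{eq:restfour}. The paper never passes to coordinates: it stays in the general frame, proves \eqref{eq:restthree} in one line from the cross-product rule, $\tvec_2^T(\mfieldt\times\nvec)\tvec_2=-\tvec_1^T\mfieldt\tvec_2$, and for \eqref{eq:restfour} splits $2\,\Pifsym(\mfieldt\times\nvec)$ into $\Pif(\mfieldt\times\nvec)$ plus its transpose and evaluates $\tvec_2^T\rrot_f$ of each piece intrinsically, via $\tvec_2^T\rrot_f\big(\Pif(\mfieldt\times\nvec)\big)=-\rrot_f(\mfieldt^T\tvec_1)=-\tvec_1^T\ccurl\mfieldt\nvec$ and $\tvec_2^T\rrot_f\big((\Pif(\mfieldt\times\nvec))^T\big)=\rrot_f(\mfieldt\tvec_2\times\nvec)=-\ddiv_f(\mfieldt\tvec_2)$, then combines these with $\rrot_f(\mfieldt^T\tvec_1)=-\partial_{t_2}(\tvec_1^T\mfieldt\tvec_1)+\partial_{t_1}(\tvec_1^T\mfieldt\tvec_2)$ and $\tvec_1^T\Pifsym(\mfieldt\times\nvec)\tvec_2=\tfrac{1}{2}(\tvec_2^T\mfieldt\tvec_2-\tvec_1^T\mfieldt\tvec_1)$ so that the $\partial_{t_1}$ contributions cancel. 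What your approach buys is elementary, fully mechanical bookkeeping in a single frame; what it costs is the covariance preamble, which does need the care you give it: curl and the surface operators are equivariant only under \emph{proper} rotations (cross products flip sign under reflections), so the orientation hypothesis $\tvec_1\times\tvec_2=\nvec$ is essential to place the frame by a rotation, and flatness of $f$ is needed so the frame is constant and commutes with differentiation. What the paper's approach buys is that every intermediate identity is already stated frame-free, so nothing has to be transported back to the general frame; this is convenient since the lemma is invoked in Theorem~\ref{thm:Hsymcurluni} with $\tvec_1=\nvec_{\partial f}$, $\tvec_2=\tvec_{\partial f}$ on arbitrary faces.
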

\begin{proof}
Let $\vfieldw =\Pif(\mfieldt^T\nvec)$. A direct computation shows
\begin{align*}
    \vfieldw\cdot\tvec_2=\mfieldt^T\nvec\cdot\tvec_2=\nvec^T\mfieldt\tvec_2,\\
    \rrot_f \vfieldw=\nvec\cdot\ccurl(\mfieldt^T\nvec)=\nvec^T\ccurl\mfieldt\nvec.
\end{align*}
This proves \eqref{eq:restone}--\eqref{eq:resttwo}.
Let $\mfieldzeta =\Pi_{f,\sym}(\mfieldt\times\nvec)$.
The cross product rule plus $\tvec_{1}\times \tvec_{2}=\nvec$ show
\begin{align*}
\tvec_{2}^T\mfieldzeta\tvec_{2}=\tvec_{2}^T(\mfieldt\times \nvec )\tvec_{2}=-\tvec_{1}^T \mfieldt \tvec_{2}.
\end{align*}
This proves \eqref{eq:restthree}.
Some elementary computations lead to 
\begin{align*}
\tvec_{1}^T\mfieldzeta \tvec_{2}&=\frac{1}{2}(\tvec_{2}^T \mfieldt \tvec_{2}-\tvec_{1}^T\mfieldt\tvec_1),\\
\tvec_{2}^T\rrot_f\big( \Pif(\mfieldt\times\nvec) \big)&=\rrot_f\big((\Pif(\mfieldt\times\nvec))^T\tvec_{2}\big) 
\\&=-\rrot_f( \mfieldt^T\tvec_{1})=-\tvec_{1}^T\ccurl\mfieldt\nvec,\\
\tvec_{2}^T\rrot_f\big(( \Pif(\mfieldt\times\nvec))^T \big)&=\rrot_f\big( \Pif(\mfieldt\times\nvec)) \tvec_{2}\big)=\rrot_f(\mfieldt \tvec_{2}\times\nvec)\\
&=-\ddiv_f(\mfieldt\tvec_{2})=-\partial_{t_{2}}(\tvec_{2}^T \mfieldt \tvec_{2})-\partial_{t_1}(\tvec_{1}^T \mfieldt \tvec_{2}).
\end{align*}
The previous three identities plus $\rrot_f(\mfieldt ^T\tvec_{1})=-\partial_{t_{2}}(\tvec_{1}^T\mfieldt\tvec_{1})+\partial_{t_1}(\tvec_{1}^T \mfieldt \tvec_{2})$ lead to
\begin{align*}
-\partial_{t_2}(\tvec_{1}^T\mfieldzeta \tvec_{2})+\tvec^T_{2}\rrot_f \mfieldzeta
=& -\tvec_{1}^T\ccurl\mfieldt \nvec-\partial_{t_{2}}\big( \tvec_{2}^T \mfieldt \tvec_{2}) .
\end{align*}
This proves  \eqref{eq:restfour}.
\end{proof}
\begin{lemma}\label{lem:symcurlrestri}Let $f\in\Fcal$ with the unit normal vector $\nvec$. (a)
Suppose  $\mfieldxi=\sym\ccurl\mfieldt$. Then,
\begin{align}\label{eq:facecurlone}
\nvec^T\mfieldxi \nvec=\rrot_f(\Pif(\mfieldt^T \nvec)),\\
\label{eq:facecurltwo}
2\ddiv_f(\mfieldxi \nvec)+\partial_n(
\nvec^T\mfieldxi\nvec)=-\rrot_f\rrot_f\Pifsym(\mfieldt\times \nvec).
\end{align}
(b) Suppose $\mfieldt=\ddev\nabla \vfieldv$. Then,
\begin{align}\label{eq:facegradone}
\Pif(\mfieldt^T \nvec)=\nabla_f(\vfieldv\cdot \nvec),\\
\label{eq:facegradtwo}
\Pifsym(\mfieldt\times \nvec)=\epsilon_f(\vfieldv\times \nvec).
\end{align}
\end{lemma}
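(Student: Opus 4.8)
The plan is to establish the four identities individually: the two first-order relations of part~(b) and the normal-normal relation \eqref{eq:facecurlone} follow from short, essentially coordinate-free manipulations using the commutation rules of Section~\ref{sec:pre} and Lemma~\ref{lem:facetoedge}, while the single genuinely second-order identity \eqref{eq:facecurltwo} requires a direct componentwise computation.

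For part~(b) I would write $\mfieldt=\ddev\nabla\vfieldv=\nabla\vfieldv-\frac{1}{3}(\ddiv\vfieldv)\mfieldI$ throughout. For \eqref{eq:facegradone}, the commutation identity $(\nabla\vfieldv)^T\nvec=\nabla(\vfieldv\cdot\nvec)$ gives $\mfieldt^T\nvec=\nabla(\vfieldv\cdot\nvec)-\frac{1}{3}(\ddiv\vfieldv)\nvec$; applying $\Pif$ and using $\Pif\nvec=0$ together with $\Pif\nabla q=\nabla_f q$ yields $\Pif(\mfieldt^T\nvec)=\nabla_f(\vfieldv\cdot\nvec)$. For \eqref{eq:facegradtwo}, the rule $\nabla\vfieldv\times\nvec=\nabla(\vfieldv\times\nvec)$ gives $\mfieldt\times\nvec=\nabla(\vfieldv\times\nvec)-\frac{1}{3}(\ddiv\vfieldv)(\mfieldI\times\nvec)$. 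The trace contribution vanishes after $\Pifsym$: since $\mfieldI\times\nvec=-{\rm mspn\ }\nvec$ is skew and $({\rm mspn\ }\nvec)\nvec=0$, one finds $\Pif(\mfieldI\times\nvec)=-{\rm mspn\ }\nvec$, whose symmetric part is zero. For the leading term, $(\vfieldv\times\nvec)\cdot\nvec=0$ shows $\Pif(\vfieldv\times\nvec)=\vfieldv\times\nvec$, and since $\Pif\nabla=\nabla_f$ acting row-wise, $\sym\Pif\nabla(\vfieldv\times\nvec)=\sym\nabla_f(\vfieldv\times\nvec)=\epsilon_f(\vfieldv\times\nvec)$ by the definition of $\epsilon_f$. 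This gives \eqref{eq:facegradtwo}.

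For \eqref{eq:facecurlone} I would use that symmetrization does not affect the quadratic form in $\nvec$, namely $\nvec^T\mfieldxi\nvec=\nvec^T(\sym\ccurl\mfieldt)\nvec=\nvec^T(\ccurl\mfieldt)\nvec$, and then quote \eqref{eq:resttwo} of Lemma~\ref{lem:facetoedge}, which identifies $\nvec^T\ccurl\mfieldt\nvec$ with $\rrot_f\Pif(\mfieldt^T\nvec)$.

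The hard part will be \eqref{eq:facecurltwo}, the only second-order identity. I would verify it in a local orthonormal frame with $\nvec=(0,0,1)^T$, so that $f$ becomes the $xy$-plane and the surface operators reduce to their two-dimensional forms; since both sides are independent of the in-plane coordinates, this costs no generality. Writing $\mfieldxi=\sym\ccurl\mfieldt=(\xi_{ij})$, one has $\ddiv_f(\mfieldxi\nvec)=\partial_x\xi_{13}+\partial_y\xi_{23}$ and $\partial_n(\nvec^T\mfieldxi\nvec)=\partial_z\xi_{33}$; substituting the entries of $\ccurl\mfieldt$ and summing, the crucial point is that \emph{every} normal derivative cancels, leaving the purely tangential expression $\partial_{xx}t_{12}-\partial_{xy}t_{11}+\partial_{xy}t_{22}-\partial_{yy}t_{21}$. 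On the other side, $\Pifsym(\mfieldt\times\nvec)$ collapses to the $2\times2$ symmetric block $\mfieldzeta=(\zeta_{ij})$ with $\zeta_{11}=t_{21}$, $\zeta_{12}=\frac{1}{2}(t_{22}-t_{11})$, $\zeta_{22}=-t_{12}$, and the two-dimensional formula $\rrot_f\rrot_f\mfieldzeta=\partial_{xx}\zeta_{22}-2\partial_{xy}\zeta_{12}+\partial_{yy}\zeta_{11}$ reproduces exactly the negative of that expression. I expect the only real difficulty to be the bookkeeping of the cross-derivative cancellations that turns the mixed normal-tangential left-hand side into a purely tangential second-order operator; once this cancellation is seen, the identity is immediate.
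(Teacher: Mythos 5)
Your proposal is correct. Parts (b) and \eqref{eq:facecurlone} coincide with the paper's own argument (kill the trace part of $\ddev\nabla\vfieldv$ via $\Pif\nvec=0$, respectively via skew-symmetry of $\mfieldI\times\nvec$; observe that the normal--normal quadratic form sees only $\sym\ccurl\mfieldt$ and invoke \eqref{eq:resttwo}), so the real comparison is for \eqref{eq:facecurltwo}. There the paper stays coordinate-free: it splits $2\mfieldxi\nvec=\ccurl\mfieldt\,\nvec+(\ccurl\mfieldt)^T\nvec$, shows $\ddiv_f(\ccurl\mfieldt\,\nvec)=-\rrot_f\rrot_f(\mfieldt\times\nvec)$, and shows that $\ddiv_f\big((\ccurl\mfieldt)^T\nvec\big)=\rrot_f\big(\nabla(\nvec^T\mfieldt\nvec)-\partial_n(\mfieldt^T\nvec)\big)=-\rrot_f\big(\partial_n(\mfieldt^T\nvec)\big)$ cancels exactly against $\partial_n(\nvec^T\mfieldxi\nvec)=\rrot_f\big(\partial_n(\mfieldt^T\nvec)\big)$; the last step replaces $\mfieldt\times\nvec$ by $\Pifsym(\mfieldt\times\nvec)$ because $\rrot_f\rrot_f$ is insensitive to both the non-tangential and the skew parts. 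You instead verify the same cancellation entry-by-entry in a frame with $\nvec=(0,0,1)^T$; I checked your bookkeeping and it is right: the left-hand side reduces to $\partial_{xx}t_{12}-\partial_{xy}t_{11}+\partial_{xy}t_{22}-\partial_{yy}t_{21}$, and your block $\zeta_{11}=t_{21}$, $\zeta_{12}=\frac{1}{2}(t_{22}-t_{11})$, $\zeta_{22}=-t_{12}$ is the correct one \emph{under the paper's convention that $\mfieldA\times\nvec$ acts column-wise} --- with a row-wise convention $\Pifsym(\mfieldt\times\nvec)$ would not even collapse to a tangential $2\times 2$ block, so this convention should be stated explicitly in your write-up. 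What each approach buys: the paper's operator-level proof exposes the structural mechanism (normal derivatives cancel between $\ccurl\mfieldt$ and its transpose, and $\rrot_f\rrot_f$ filters out everything but the symmetric tangential projection), which is reused in the unisolvence and bubble-complex arguments; yours is elementary, self-contained and easy to audit. The one soft spot is your justification for passing to the special frame: ``both sides are independent of the in-plane coordinates'' is not the right reason. What you need is that both sides are scalars built equivariantly from $(\mfieldt,\nvec)$, hence unchanged if one rotates the ambient coordinates by $Q$ with $\det Q=1$ sending $\nvec$ to $(0,0,1)^T$ and $\mfieldt$ to $Q\mfieldt Q^T$; that is one sentence, but it should be said precisely, since the cross product and the surface operators are orientation-dependent.
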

\begin{proof}{\em Proof of (a)}.
The first identity \eqref{eq:facecurlone} follows from 
the definition of $\rrot_f$ with
 \begin{align*}
 \nvec^T\ccurl \mfieldt \nvec =\ccurl(\mfieldt^T\nvec)\cdot \nvec=\rrot_f(\mfieldt^T \nvec)=\rrot_f(\Pif(\mfieldt^T\nvec)).
 \end{align*}
 The second term on the left-hand side of \eqref{eq:facecurltwo} satisfies
 \begin{align*}
 \partial_n(\nvec^T\ccurl \mfieldt \nvec)&= \partial_n\big(\rrot_f(\mfieldt^T \nvec)\big)= \rrot_f\big(\partial_n(\mfieldt^T\nvec)\big).
 \end{align*}
For $\ccurl\mfieldt \nvec$,
 \begin{align*}
\ddiv_f( \ccurl \mfieldt \nvec)&= \rrot_f(\nvec\times \rrot_f\mfieldt) = -\rrot_f\rrot_f( \mfieldt\times \nvec).
 \end{align*}
As for $( \ccurl \mfieldt)^T\nvec=\ccurl(\mfieldt^T\nvec)$, the cross product rule   leads to
 \begin{align*}
\ddiv_f\big( (\ccurl \mfieldt)^T\nvec\big)&=\rrot_f\big(\nvec
\times\ccurl(\mfieldt^T \nvec)\big)\\
&=\rrot_f\big(\nabla(\nvec^T\mfieldt \nvec)-\partial_n( \mfieldt^T\nvec)\big)=- \rrot_f\big(\partial_n(\mfieldt^T\nvec)\big).
 \end{align*}
The previous arguments lead to
\begin{align*}
2\ddiv_f(\xi \nvec)+\partial_n(\nvec^T\xi \nvec)=-\rrot_f\rrot_f( \mfieldt\times \nvec).
\end{align*}
Since
 \begin{align*}
\rrot_f\rrot_f( \mfieldt\times \nvec)&=\rrot_f\rrot_f( \Pif(\mfieldt\times \nvec))=\rrot_f\rrot_f\big((\Pif(\mfieldt\times \nvec))^T\big),
 \end{align*}
 the combination with the previous identity concludes \eqref{eq:facecurltwo}.
 
 {\em Proof of (b)}. Since $\Pif \nvec=0$, it holds that
\begin{align*}
\Pif(\mfieldt^T \nvec)=\Pif\big((\nabla \vfieldv)^T\nvec\big)=\Pif(\nabla(\vfieldv\cdot \nvec))=\nabla_f(\vfieldv\cdot \nvec).
\end{align*}
This proves \eqref{eq:facegradone}. Since $\mfieldI\times \nvec=\nvec\times \mfieldI=-(\nvec\times \mfieldI)^T$, this leads to
\begin{align*}
\Pi_{f,\sym}(\mfieldt\times \nvec)=\Pifsym(\nabla \vfieldv\times \nvec)=\Pifsym(\nabla(\vfieldv\times \nvec))=\epsilon_f(\vfieldv\times \nvec).
\end{align*}
This proves \eqref{eq:facegradtwo}.
\end{proof}

\begin{lemma}[\cite{chen2020discrete}]\label{lem:decomtwo}
It holds that
\begin{align*}
    \ddiv:\ddev(  P_{k}(\Omega;\R^3)\bx^T )\rightarrow P_{k}(\Omega;\R^3) .
\end{align*}
is a bijection. 
\end{lemma}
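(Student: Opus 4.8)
The plan is to reduce the claim to injectivity via a dimension count, and then establish injectivity by a homogeneity argument based on Euler's identity. First I would observe that the parametrization map $\vfieldv\mapsto\ddev(\vfieldv\bx^T)$ from $P_k(\Omega;\R^3)$ into matrix polynomials is injective: if $\ddev(\vfieldv\bx^T)=0$ then $\vfieldv\bx^T=\tfrac13(\vfieldv\cdot\bx)\mfieldI$ is a scalar multiple of the identity, so its off-diagonal entries $v_ix_j$ ($i\neq j$) vanish as polynomials, forcing $\vfieldv=0$. Hence $\dim\ddev(P_k(\Omega;\R^3)\bx^T)=\dim P_k(\Omega;\R^3)$. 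Since $\ddiv$ lowers the polynomial degree by one (so that $\ddev(\vfieldv\bx^T)$, of degree $\le k+1$, maps into $P_k(\Omega;\R^3)$), the domain and codomain of the map in question have equal finite dimension, and it suffices to prove that $\ddiv$ is injective on $\ddev(P_k(\Omega;\R^3)\bx^T)$.

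Next I would compute the operator explicitly and split by homogeneous degree. Writing $\ddev(\vfieldv\bx^T)=\vfieldv\bx^T-\tfrac13(\vfieldv\cdot\bx)\mfieldI$ and applying $\ddiv$ row-wise, the $i$-th row $v_i\bx^T$ of $\vfieldv\bx^T$ contributes $\sum_j\partial_j(v_ix_j)=(\bx\cdot\nabla)v_i+3v_i$, while the correction term contributes $\tfrac13\nabla(\vfieldv\cdot\bx)$. Decomposing $\vfieldv=\sum_{m=0}^k\vfieldv_m$ into homogeneous parts and noting that the whole operator preserves homogeneity degree, I may treat each $\vfieldv_m$ separately. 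For $\vfieldv$ homogeneous of degree $m$, Euler's identity gives $(\bx\cdot\nabla)\vfieldv=m\vfieldv$, so
\begin{align*}
\ddiv\,\ddev(\vfieldv\bx^T)=(m+3)\vfieldv-\tfrac13\nabla(\vfieldv\cdot\bx).
\end{align*}

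The core step, and the one I expect to be the only real obstacle, is the injectivity on each homogeneous piece. Suppose the displayed expression vanishes, so $(m+3)\vfieldv=\tfrac13\nabla p$ with $p:=\vfieldv\cdot\bx$ homogeneous of degree $m+1$. Taking the inner product with $\bx$ and using Euler's identity on $p$ gives $(m+3)\,p=(m+3)(\vfieldv\cdot\bx)=\tfrac13(\bx\cdot\nabla p)=\tfrac{m+1}{3}p$, whence $\tfrac{2m+8}{3}\,p=0$. Since the scalar coefficient $\tfrac{2m+8}{3}$ is strictly positive for every $m\ge0$, this forces $p=0$, and then $(m+3)\vfieldv=\tfrac13\nabla p=0$ yields $\vfieldv=0$. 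Thus $\ddiv$ is injective on each homogeneous block, hence on $\ddev(P_k(\Omega;\R^3)\bx^T)$; combined with the dimension count this gives bijectivity. The only point requiring care is verifying that the coefficient arising after reinsertion never degenerates, which the elementary estimate $2m+8>0$ settles uniformly in $m$.
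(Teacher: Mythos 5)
Your proof is correct. Note that the paper itself offers no proof of this lemma: it is imported verbatim from \cite{chen2020discrete}, so there is no internal argument to compare yours against. Your route—first showing $\vfieldv\mapsto\ddev(\vfieldv\bx^T)$ is injective so that domain and codomain have equal dimension, then computing $\ddiv\ddev(\vfieldv\bx^T)=(\bx\cdot\nabla)\vfieldv+3\vfieldv-\tfrac13\nabla(\vfieldv\cdot\bx)$, splitting into homogeneous components, and using Euler's identity twice to get the nondegenerate coefficient $\tfrac{2m+8}{3}$—is exactly the standard technique used for such polynomial bijections in the cited reference and in the related $\ddiv\ddiv$-complex literature (e.g.\ for $\ddiv:\sym(P_k\bx^T)\to P_k$), and every step checks out, including the reduction of surjectivity to injectivity by rank--nullity.
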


\begin{theorem}\label{thm:Hsymcurluni}
The degrees of freedom \ref{enu:HScurla}--\ref{enu:HScurlf} are unisolvent for $P_{k +1}(K;\Tbb)$.
\end{theorem}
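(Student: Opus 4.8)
The plan is to establish unisolvence in the standard two-step fashion: first check that the number of degrees of freedom \ref{enu:HScurla}--\ref{enu:HScurlf} equals $\dim P_{k+1}(K;\Tbb)=8\binom{k+4}{3}$ by a direct (but routine) count, and then prove injectivity, i.e.\ that if all of \ref{enu:HScurla}--\ref{enu:HScurlf} vanish for some $\mfieldt\in P_{k +1}(K;\Tbb)$ then $\mfieldt=0$. The entire argument rests on transferring the problem face by face to the two-dimensional elements of Section~\ref{sec:FEM2D} through the trace identities of Lemmas~\ref{lem:facetoedge} and \ref{lem:symcurlrestri}, and then closing up with the interior degrees of freedom.

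The first and key step is to show that the two tangential face fields vanish on every face. Fix $f\in\Fcal$ with normal $\nvec$ and consider $\vfieldu_f:=\Pif(\mfieldt^T\nvec)\in P_{k+1}(f;\R^2)$ and $\mfieldzeta_f:=\Pifsym(\mfieldt\times\nvec)\in P_{k+1}(f;\Sbb_2)$. Identities \eqref{eq:restone}--\eqref{eq:restfour} rewrite the two-dimensional vertex and edge data of $\vfieldu_f$ (its value and gradient, its tangential trace, and $\rrot_f\vfieldu_f$) and of $\mfieldzeta_f$ (its value and gradient, its tangential--tangential trace, and the combination in \ref{enu:Hrotrottc}) entirely in terms of the quantities measured by \ref{enu:HScurla}--\ref{enu:HScurlc}. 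Here one uses that $\nvec^T(\cdot)\nvec$ only sees the symmetric part, so that $\rrot_f\vfieldu_f=\nvec^T\ccurl\mfieldt\nvec=\nvec^T\sym\ccurl\mfieldt\nvec$ is controlled by the first family in \ref{enu:HScurlc}, while the antisymmetric and $\partial_t$ contributions appearing in \eqref{eq:restfour} are captured by the second family in \ref{enu:HScurlc} (after an edge integration by parts whose endpoint terms are handled by \ref{enu:HScurla}); since the face normal and the in-face edge normal both lie in the span of the two edge normals $\nvec_1,\nvec_2$, these moments control the required quantities independently of the chosen normal frame, as noted in the remark following the definition. The face moments \ref{enu:HScurld} and \ref{enu:HScurle} are precisely the interior degrees of freedom \ref{enu:Hrotvd} and \ref{enu:Hrotrottd}. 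Thus all of \ref{enu:Hrotva}--\ref{enu:Hrotvd} vanish for $\vfieldu_f$ and all of \ref{enu:Hrotrotta}--\ref{enu:Hrotrottd} vanish for $\mfieldzeta_f$, and Lemmas~\ref{lem:unirot} and \ref{lem:unisorotrot} give $\vfieldu_f\equiv 0$ and $\mfieldzeta_f\equiv 0$ on every face $f$.

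Next I would kill $\mfieldxi:=\sym\ccurl\mfieldt\in P_k(K;\Sbb)$. Because $\vfieldu_f$ and $\mfieldzeta_f$ vanish identically on each $f$, Lemma~\ref{lem:symcurlrestri}(a) yields $\nvec^T\mfieldxi\nvec=\rrot_f\vfieldu_f=0$ and $2\ddiv_f(\mfieldxi\nvec)+\partial_n(\nvec^T\mfieldxi\nvec)=-\rrot_f\rrot_f\mfieldzeta_f=0$ on each face. These are exactly the two $H(\ddiv\ddiv)$ face traces, so every boundary degree of freedom of the $H(\ddiv\ddiv,\Sbb)$ element of \cite{ChenHuang20203D} vanishes for $\mfieldxi$; together with the first two families in \ref{enu:HScurlf}, which reproduce that element's interior degrees of freedom, its unisolvence forces $\mfieldxi=\sym\ccurl\mfieldt=0$. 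Exactness of the polynomial complex \eqref{eq:complex3Dpol} then gives $\mfieldt=\ddev\nabla\vfieldv$ for some $\vfieldv\in P_{k+2}(K;\R^3)$, unique modulo $RT$.

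Finally I would close the argument with the last interior degree of freedom. By Lemma~\ref{lem:symcurlrestri}(b) the vanishing of the face fields becomes $\nabla_f(\vfieldv\cdot\nvec)=0$ and $\epsilon_f(\vfieldv\times\nvec)=0$ on each face, which together with the vertex data in \ref{enu:HScurla} pins the boundary trace of $\vfieldv$ down to that of an $RT$ field; after subtracting that $RT$ field (which does not change $\mfieldt=\ddev\nabla\vfieldv$) one may assume $\vfieldv$ has vanishing boundary trace. The last family in \ref{enu:HScurlf} reads $\int_K\mfieldt:\ddev(\vfieldw\bx^T)=\int_K\vfieldw\cdot(\mfieldt\bx)=0$ for all $\vfieldw\in P_{k-2}(K;\R^3)$ (using $\tr\mfieldt=0$), and combining this with the bijectivity of Lemma~\ref{lem:decomtwo} and an integration by parts should force the remaining potential to vanish, whence $\mfieldt=0$. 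I expect the main obstacle to be exactly this last step: correctly matching the final interior degree of freedom against the potential $\vfieldv$ so that Lemma~\ref{lem:decomtwo} applies, and verifying that the vanishing two-dimensional face traces really do confine $\vfieldv|_{\partial K}$ to an $RT$ field. The frame-independent edge bookkeeping of the second step is the other place where genuine care is required.
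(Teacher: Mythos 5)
Your proposal is correct and follows essentially the same route as the paper's proof: vanishing of the two tangential face fields $\Pif(\mfieldt^T\nvec)$ and $\Pifsym(\mfieldt\times\nvec)$ via the two-dimensional unisolvence lemmas, killing $\sym\ccurl\mfieldt$ as a divdiv-free $H(\ddiv\ddiv)$ bubble using the first two families in \ref{enu:HScurlf}, exactness of the polynomial complex \eqref{eq:complex3Dpol} to write $\mfieldt=\ddev\nabla\vfieldv$, normalization modulo $RT$ to conclude $\vfieldv=0$ on $\partial K$, and the final interior degrees of freedom with Lemma~\ref{lem:decomtwo}. The last step you flagged as the main obstacle is exactly the paper's short computation: writing $\vfieldv=\lambda_1\lambda_2\lambda_3\lambda_4\vfieldw$, integrating by parts to get $\int_K\mfieldt:\mfieldchi=-\int_K\lambda_1\lambda_2\lambda_3\lambda_4\vfieldw\cdot\ddiv\mfieldchi$ for $\mfieldchi\in\ddev(P_{k-2}(K;\R^3)\bx^T)$, and choosing $\ddiv\mfieldchi=\vfieldw$ via the bijection of Lemma~\ref{lem:decomtwo}.
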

\begin{proof}
Note that $\dim( \sym(\bx\times P_{k -2}(K;\Tbb))=\frac{k (k -1)(5k +14)}{6}$  from \cite[Lemma 4.6]{ChenHuang20203D}.
 A direct computation shows that the number of the degrees of freedom is equal to the dimension of $P_{k +1}(K;\Tbb)$, namely
\begin{align*}
128&+12(k -2)+24(k -1) +4(k ^2-k -3)+4(\frac{3k (k +1)}{2}-9)\\
&+\frac{k (k -1)(5k +14)}{6}+\frac{k (k -1)}{2}+\frac{(k -1)k (k +1)}{2} = \frac{4(k +2)(k +3)(k +4)}{3} .
\end{align*}
It suffices to prove if \ref{enu:HScurla}--\ref{enu:HScurle} vanish  for $\mfieldt\in P_{k +1}(K;\Tbb)$ then $\mfieldt=0$. The degrees of freedom \ref{enu:HScurla}--\ref{enu:HScurlb} show,  on each edge $e\in \Ecal(K)$,
\begin{align}
\label{eq:edgecurlvalue}
 \nvec_{i}^T\mfieldt \tvec=0,\quad  i=1,2.
\end{align}

Given $f\in\Fcal(K)$,  let   $\vfieldw =\Pif(\mfieldt^T\nvec)$  and $\mfieldzeta =\Pifsym(\mfieldt\times\nvec)$.  Lemma~\ref{lem:facetoedge} with $\tvec_1=\nvec_{\partial f}$ and $\tvec_2=\tvec_{\partial f}$ shows
\begin{align*}
\vfieldw\cdot \tvec_{\partial f}=& \nvec^T\mfieldt \tvec_{\partial f}\text{ and }\rrot_f\vfieldw=\nvec^T\ccurl \mfieldt \nvec=\nvec^T\sym\ccurl\mfieldt\nvec.
\end{align*}
The combination with \eqref{eq:edgecurlvalue}, \ref{enu:HScurla}, \ref{enu:HScurlb} and   the first degrees of freedom in \ref{enu:HScurlc} leads to
\begin{align}
\label{eq:facecurlvalue}
\vfieldw\cdot \tvec_{\partial f}=0 \text{ and }\rrot_f \vfieldw=0\text{ on }\partial f. 
\end{align} 
The degrees of freedom \ref{enu:HScurld} combined with similar arguments as in Lemma~\ref{lem:unirot} for $f$ on the $x-y$ plane lead to $\vfieldw=0$ on $f$.
On the other hand, \eqref{eq:restthree}--\eqref{eq:restfour} show
\begin{align*}
\tvec_{\partial f}^T\mfieldzeta \tvec_{\partial f}=-\nvec_{\partial f}^T \mfieldt \tvec_{\partial f},\\
-\partial_{t_{\partial f}}(\nvec_{\partial f}^T\mfieldzeta \tvec_{\partial f})+\tvec^T_{\partial f}\rrot_f \mfieldzeta= -\nvec_{\partial f}^T\ccurl\mfieldt \nvec-\partial_{t_{\partial f}}\big( \tvec_{\partial f}^T \mfieldt \tvec_{\partial f}) .
\end{align*}
Hence the former identity plus \ref{enu:HScurla} and \ref{enu:HScurlb} show $\tvec_{\partial f}^T\mfieldzeta \tvec_{\partial f}=0$, and the latter identity plus \ref{enu:HScurla} and \ref{enu:HScurlc} show
$-\partial_{t_{\partial f}}(\nvec_{\partial f}^T\mfieldzeta \tvec_{\partial f})+\tvec^T_{\partial f}\rrot_f \mfieldzeta=0$ on $\partial f$. The degrees of freedom \ref{enu:HScurle} combined with similar arguments  as in Lemma~\ref{lem:unisorotrot} lead to $\mfieldzeta=0$. The cross product rules show   
\begin{align*}
(\nvec\times \mfieldt+(\nvec\times \mfieldt)^T)\nvec =\nvec\times(\mfieldt^T\nvec) ,\\
\nvec\times (\nvec\times \mfieldt+(\nvec\times \mfieldt)^T)\times \nvec=-\Pif(\mfieldt\times\nvec)-(\Pif(\mfieldt\times\nvec))^T.
\end{align*}
The combination with $\vfieldw=0$ and $\mfieldzeta=0$ implies
\begin{align}\label{eq:facecont}
\nvec\times \mfieldt+(\nvec\times \mfieldt)^T=0 \text{ on }f.
\end{align}

Let $\mfieldxi =\sym\ccurl\mfieldt$. For any $f\in\Fcal(K)$ with the unit normal vector $\nvec$, it follows from Lemma~\ref{lem:symcurlrestri} that
\begin{align*}
\nvec^T\mfieldxi \nvec=\rrot_f(\Pif(\mfieldt^T \nvec))=0,\\
2\ddiv_f(\mfieldxi \nvec)+\partial_n(
\nvec^T\mfieldxi\nvec)=-\rrot_f\rrot_f\Pifsym(\mfieldt\times \nvec)=0.
\end{align*}
For any $e\in\Ecal(K)$,
\begin{align*}
    \nvec_i^T\mfieldxi\nvec_j=\nvec_i^T\sym\ccurl\mfieldt\nvec_j=0,\;i,j=1,2.
\end{align*}
Hence $\mfieldxi$ is a $H(\ddiv\ddiv)$ bubble function  and $\ddiv\ddiv\mfieldxi=0$. The first two degrees of freedom in \ref{enu:HScurlf} and similar arguments as in \cite[Lemma~4.6]{ChenHuang20203D} lead to $\mfieldxi=0$.  Then the polynomial complex \eqref{eq:complex3Dpol} shows that there exists $\vfieldu\in P_{k+2}(K;\R^3) $ such that
\begin{align}\label{eq:curlgradproof}
\mfieldt=\ddev\nabla \vfieldu\text{ and }Q_0^f(\vfieldu\cdot \nvec)=0\text{ for all }f\in \Fcal(K) 
\end{align}
with the $L^2$ projection $Q_0^f$ onto $P_0(f)$.

 Given $f\in\Fcal(K)$ with the unit normal vector $\nvec$,  the boundary conditions $\Pif(\mfieldt^T\nvec)=0$ and $\Pifsym(\mfieldt\times\nvec)=0$ for $\mfieldt$, Lemma \ref{lem:symcurlrestri} and \eqref{eq:curlgradproof} lead to  $\vfieldu\cdot \nvec=0$, and $\epsilon_f(\vfieldu\times \nvec)=0$ on $f$. This shows $\vfieldu=0$ at each vertex of $K$ and $\vfieldu\times \nvec$ is a linear function on $f$.  Furthermore, $\vfieldu\times \nvec=0$ on $f$ and thus $\vfieldu=0$ on $\partial K$.  This leads to  the existence of some $\vfieldv\in P_{k -2}(K;\R^3)$  with
\begin{align*}
    \vfieldu=\lambda_1\lambda_2\lambda_3\lambda_4\vfieldv.
\end{align*}
For any $\mfieldchi\in\ddev(P_{k-2}(K;\R^3)\bx^T)$, this and an integration by parts lead to
\begin{align*}
\int_K\mfieldt:\mfieldchi=-\int_K\lambda_1\lambda_2\lambda_3\lambda_4\vfieldv\cdot\ddiv\mfieldchi.
\end{align*}
The combination with the bijection in Lemma~\ref{lem:decomtwo} and the third vanishing degrees of freedom in \ref{enu:HScurlf} implies
$\vfieldv=0$ and hence $\mfieldt=0$. This concludes the proof.
\end{proof}  
\begin{remark} 
Let $\tvec_{1}$ and $\tvec_{2}$ denote two independent unit tangential vectors of $f$. Define the  space of traceless matrices related to the face $f$ as follows
\begin{align*}\label{eq:bubbleelem}
\Tbb_f:={\rm span}\{\tvec_{1}\nvec^T,\tvec_{2}\nvec^T,\nvec\nvec^T-\frac{1}{3}I\}.
\end{align*}  Then the $H(\sym\ccurl,\Tbb)$ bubble function space with respect to the degrees of freedom  \ref{enu:HScurla}--\ref{enu:HScurle} on $K$ reads
\begin{align}\label{eq:bubsymcurl}
\begin{aligned}
B_{ k +1,\sym\ccurl}(K):=&\lambda_{1}\lambda_2\lambda_3\lambda_4P_{k -3}(K;\Tbb)+\sum_{f\in \Fcal(K)}\lambda_{f,1}\lambda_{f,2}\lambda_{f,3}P_{k -2}(f)\Tbb_f 
\end{aligned}
\end{align}
 with the barycentric coordinates $\lambda_{f,1}, \lambda_{f,1}, \lambda_{f,3}$ with respect to $f$.
In fact,
it follows from \eqref{eq:facecont} that
$\mfieldt=0\text{ on all  edges }e\in\Ecal(K) $. Then  
\begin{align*}
\mfieldt\in \sum_{1\leq i<j<k\leq 4}\lambda_{i}\lambda_{j}\lambda_{k} P_{k -2}(K;\Tbb) .
\end{align*}
If $\mfieldt\neq 0$ on some  $f$, then $\mfieldt|_f\in \lambda_{f,1}\lambda_{f,2}\lambda_{f,3}P_{k -2}(f;\Tbb)$.
Since  $ \nvec\times \mfieldt+(\nvec\times \mfieldt)^T|_f=0$,
this leads to $\mfieldt|_f\in \lambda_{f,1}\lambda_{f,2}\lambda_{f,3} P_{k-3}(f)\Tbb_f$. If $\mfieldt=0$ on all faces $f\in \Fcal(K)$, then $\mfieldt\in\lambda_1\lambda_2\lambda_3\lambda_4 P_{k-4}(K;\Tbb)$.
\end{remark}

\bigskip
The proof of \eqref{eq:facecont}  in Theorem \ref{thm:Hsymcurluni} implies $(\nvec\times \mfieldt)+(\nvec\times \mfieldt)^T$ is continuous across faces if the degrees of freedom \ref{enu:HScurla}--\ref{enu:HScurlf} are single-valued. 
This allows the definition of the following   $H(\sym\ccurl,\Tbb)$  conforming finite element  space $\Lambda_{k+1,h}$ with $k\geq 3$ by
\begin{align}
\label{eq:symcurlspace}
\begin{aligned}
\Lambda_{ k +1,h}:=&\{ \mfieldt_h\in H {(\sym\ccurl,\Omega;\Tbb)}\ |\ \mfieldt_h|_K\in P_{k +1}(K;\Tbb )\text{ for all }K\in\Tcal,\\
&\text{ all the degrees of freedom \ref{enu:HScurla}--\ref{enu:HScurlf} are single-valued}\}.
\end{aligned}
\end{align}
\subsection{Finite element $\ddiv\ddiv$ complexes}
Recall the $H(\ddiv\ddiv)$ finite element spaces   from \cite{ChenHuang20203D}. The shape function space is $ P_{k }(K;\Sbb) $ with $k \geq 3$ and the degrees of freedom are defined by
\begin{enumerate}[label=(8\alph*)]
\item\label{enu:Hdivdiva}function value  at each vertex $\bx\in\Vcal$:
\begin{align*}
\mfieldt(\bx),
\end{align*}
\item\label{enu:Hdivdivb}moments of order $\leq k-2$ of the following components on each edge $e\in \Ecal$:
\begin{align*}
\int_e \nvec_i^T\mfieldt \nvec_jq,\quad q\in P_{k -2}(e), i,j=1,2,
\end{align*}
\item\label{enu:Hdivdivc}moments of order $\leq k-3$ of the normal normal component on each face $f\in \Fcal$:
\begin{align*}
\int_f \nvec^T\mfieldt \nvec  q,\quad q\in P_{k -3}(f),
\end{align*}
\item\label{enu:Hdivdivd}moments of order $\leq k-1$ of the following derivative  on each face $f\in \Fcal$:
\begin{align*}
\int_f(2\ddiv_f(\mfieldt \nvec)+\partial_n(\nvec^T\mfieldt \nvec))q,\quad q\in P_{k -1}(f),
\end{align*}
\item\label{enu:Hdivdive}interior degrees of freedom in each element $K\in \Tcal$ defined by
\begin{align*}
\int_K\mfieldt:\mfieldxi,\quad \mfieldxi \in \nabla^2 P_{k -2}(K)+\sym(\bx\times P_{k -2}(K;\Tbb)),
\end{align*}
\item\label{enu:Hdivdivf}interior degrees of freedom in each element $K\in\Tcal$ defined by
\begin{align*}
\int_{f_1} \mfieldt \nvec\cdot  \vfieldv,\quad \vfieldv\in P_{k -2}(f_1)(\nvec\times\bx) \text{ for an arbitrarily but fixed face }f_1.
\end{align*}
\end{enumerate}

The  $H(\ddiv\ddiv,\Sbb)$  conforming finite element space $\Sigma_{k ,h}$ is then defined by
\begin{align}
\label{eq:divdivspace}
\begin{aligned}
\Sigma_{k ,h}:=&\{ \mfieldt_h\in H (\ddiv\ddiv,\Omega;\Sbb)\ |\ \mfieldt_h|_K\in P_{k }(K;\Sbb )\text{ for all }K\in\Tcal,\\
&\text{ all the degrees of freedom \ref{enu:Hdivdiva}--\ref{enu:Hdivdivf} are single-valued}\}.
\end{aligned}
\end{align}

For the conforming finite element space of $H^1(\Omega;\R^3)$,
the shape function space is $ P_{k +2}(K;\R^3)$.  
Recall the space  $P_{k -1,0}(f)$ from \eqref{eq:secondchoiceBO}.  
The degrees of freedom   are defined as follows
\begin{enumerate}[label=(9\alph*)]
\item  \label{enu:Honea}function value and  first and second order derivatives of each component at each vertex $\bx\in\Vcal$:
\begin{align*}
\vfieldu(\bx),\nabla \vfieldu(\bx), \nabla^2 \vfieldu (\bx), 
\end{align*}
\item  \label{enu:Honeb}moments of order $\leq k-4$ of each component on each edge $e\in\Ecal$:
\begin{align*}
\int_e \vfieldu\cdot  \vfieldv,\quad \vfieldv\in P_{k -4}(e),\,i=1,2,
\end{align*}
\item \label{enu:Honec}degrees of freedom  on each face $f\in\Fcal$ defined by
\begin{align*}
\int_f \vfieldu\cdot \vfieldv\quad \vfieldv\in P_{k -1,0}(f;\R^3),
\end{align*}
\item \label{enu:Honed}moments of order $\leq k-2$ of each component  in   each element  $K\in \Tcal$:
\begin{align*}
\int_K \vfieldu\cdot \vfieldv,\quad \vfieldv\in P_{k -2}(K;\R^3) .
\end{align*}
\end{enumerate}
\begin{theorem}\label{thm:Honeuni}
The degrees of freedom \ref{enu:Honea}--\ref{enu:Honed} are unisolvent for  $P_{k +2}(K;\R^3)$.
\end{theorem}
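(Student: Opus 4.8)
The plan is to follow the three-dimensional analogue of Lemma~\ref{lem:uniHone}. First I would verify by a direct count that the number of functionals in \ref{enu:Honea}--\ref{enu:Honed} equals $\dim P_{k+2}(K;\R^3)=3\binom{k+5}{3}$: the four vertices contribute the value, gradient and symmetric Hessian of each component, the six edges contribute $\dim P_{k-4}(e)$ per component, the four faces contribute $\dim P_{k-1,0}(f;\R^3)$, and the interior contributes $\dim P_{k-2}(K;\R^3)$. Unisolvence then reduces to showing that a function $\vfieldu\in P_{k+2}(K;\R^3)$ for which all of \ref{enu:Honea}--\ref{enu:Honed} vanish must be zero. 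Since every functional acts componentwise---in particular the face functionals \ref{enu:Honec} test against $P_{k-1,0}(f;\R^3)$, whose components range over all of $P_{k-1,0}(f)$---the problem decouples into three identical scalar statements for $P_{k+2}(K)$, and it suffices to treat the scalar case.

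For the scalar problem I would argue along the skeleton, edges first, then faces, then the interior, as in Lemma~\ref{lem:uniHone}. On an edge $e$ with endpoints $a,b$, the vertex data \ref{enu:Honea} forces the trace $u|_e$ and its first two tangential derivatives to vanish at $a$ and $b$, so $u|_e=\lambda_a^3\lambda_b^3 s$ with $s\in P_{k-4}(e)$; choosing $q=s$ in the edge moments \ref{enu:Honeb} and using the positivity of $\lambda_a^3\lambda_b^3$ on the interior of $e$ yields $s=0$, hence $u$ vanishes on every edge of $K$. Therefore, on each face $f$ the trace $u|_f\in P_{k+2}(f)$ vanishes on $\partial f$ and factors as $u|_f=\lambda_{f,1}\lambda_{f,2}\lambda_{f,3}\,r_f$ with $r_f\in P_{k-1}(f)$.

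The delicate step is the face argument, and it is here that the extra second-order vertex continuity of this element is used. At a vertex $v$ of $f$ two of the barycentric coordinates vanish, so the corresponding product $\lambda_{f,i}\lambda_{f,j}$ vanishes to second order; a short computation then shows that the surface Hessian of $u|_f$ at $v$ equals $r_f(v)\,(\nabla_f\lambda_{f,i}\otimes\nabla_f\lambda_{f,j}+\nabla_f\lambda_{f,j}\otimes\nabla_f\lambda_{f,i})$. Because the two surface gradients are linearly independent, the vanishing of the second derivatives in \ref{enu:Honea} forces $r_f(v)=0$ at every vertex of $f$, i.e. $r_f\in P_{k-1,0}(f)$. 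Testing the face moments \ref{enu:Honec} against $q=r_f$ then gives $r_f=0$, so $u$ vanishes on $\partial K$; consequently $u=\lambda_1\lambda_2\lambda_3\lambda_4\,w$ with $w\in P_{k-2}(K)$, and testing the interior moments \ref{enu:Honed} against $q=w$ forces $w=0$ and hence $u\equiv 0$. I do not expect a genuine obstacle beyond organizing this factor-and-test pattern; the only point requiring care is the surface-Hessian identity on the faces, which is precisely the mechanism by which the vertex degrees of freedom promote the residual into $P_{k-1,0}(f)$ and make \ref{enu:Honec} unisolvent.
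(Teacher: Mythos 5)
Your proof is correct and follows essentially the same route as the paper: a dimension count, followed by showing the trace vanishes on each face via the mechanism of Lemma~\ref{lem:uniHone} (boundary vanishing, bubble factorization $u|_f=\lambda_{f,1}\lambda_{f,2}\lambda_{f,3}r_f$, second-order vertex derivatives forcing $r_f\in P_{k-1,0}(f)$, then the face moments), and finally the factorization $\vfieldu=\lambda_1\lambda_2\lambda_3\lambda_4\vfieldw$ killed by the interior moments \ref{enu:Honed}. The only difference is presentational: the paper invokes ``similar arguments as in Lemma~\ref{lem:uniHone}'' on each face, while you decouple into scalar components and write out the edge step and the surface-Hessian identity explicitly, both of which are correct.
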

\begin{proof}A direct computation shows that the number of the degrees of freedom is equal to the dimension of $P_{k+2}(K;\R^3)$, namely
\begin{align*}
120+18(k -3)+&6(k ^2+k -6) +\frac{(k -1)k (k +1)}{2}=\frac{(k +3)(k +4)(k +5)}{2}.
\end{align*} 

It suffices to prove if \ref{enu:Honea}--\ref{enu:Honed} vanish  for $\vfieldu\in P_{m+1}(K;\R^3)$ then $\vfieldu=0$.  
For each $f\in\Fcal(K)$, due to $\vfieldu \in P_{k +2}(f;\R^3)$, similar arguments as in Lemma \ref{lem:uniHone} plus \ref{enu:Honea}--\ref{enu:Honec} lead to $\vfieldu =0$ on $f$.  
The zero boundary condition $\vfieldu=0$ on $\partial K$ shows that there exists some $\vfieldw\in P_{k -2}(K;\R^3)$  such that
\begin{align*}
\vfieldu=\lambda_1\lambda_2\lambda_3\lambda_4 \vfieldw.
\end{align*} 
This and the degrees of freedom in \ref{enu:Honed} lead to $\vfieldu\equiv 0$ in $K$. This concludes the proof.
\end{proof}

The conforming finite element space $V_{k +2,h}\subset H^1(\Omega;\R^3)$ is defined as
\begin{align}
\label{eq:HoneS}
\begin{aligned}
V_{k +2,h}:=&\{ \vfieldv_h\in H^1(\Omega;\R^3)\ |\ \vfieldv_h|_K\in P_{k +2}(K;\R^3)\text{ for all }K\in\Tcal,\\
&\text{ all the degrees of freedom  \ref{enu:Honea}--\ref{enu:Honed} are single-valued}\}.
\end{aligned}
\end{align}

 Before establishing the conforming finite element $\ddiv\ddiv$ complexes with respect to  \eqref{eq:complex3Dintrodis}, the following theorem proves that the bubble function spaces on each element $K$ form  exact complexes. Recall the bubble function space $B_{k+1,\sym\ccurl}(K)$ of $H(\sym\ccurl,\Tbb)$ from \eqref{eq:bubsymcurl}. Let $B_{k +2,\ddev\nabla}(K)$ denote the bubble function space of the vectorial  $H^1$ space with vanishing degrees of freedom \ref{enu:Honea}--\ref{enu:Honec}. It is easy to check that
 \begin{align*}
 B_{k +2,\ddev\nabla}(K):=\lambda_1\lambda_2\lambda_3\lambda_4P_{k-2}(K;\R^3).
\end{align*}  Let  $B_{k , \ddiv\ddiv}(K)$ denote the bubble function space of $H(\ddiv\ddiv,\Sbb)$ with vanishing degrees of freedom \ref{enu:Hdivdiva}--\ref{enu:Hdivdive}. The  following lemma plus (b) of  Lemma \ref{lem:symcurlrestri}  imply the inclusion $ \ddev\nabla B_{k +2,\ddev\nabla}(K)\subset B_{k+1,\sym\ccurl}(K)$.
\begin{lemma}\label{lem:restrictone}
Suppose $\tau=\ddev\nabla v$. Then, on edge $e$ with the unit tangential vector  $\tvec=\nvec_1\times \nvec_2 $,
\begin{align}
\label{eq:edgegradone}
\nvec_i^T\sym\ccurl  \mfieldt \nvec_j=0,\quad i,j=1,2,\\
\label{eq:edgegradtwo}
\nvec_1^T\ccurl \mfieldt \nvec_2 -\partial_t(\tvec^T\mfieldt \tvec)=-\partial^2_{tt}(\vfieldv\cdot \tvec).
\end{align}
\end{lemma}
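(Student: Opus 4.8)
The plan is to compute $\ccurl\mfieldt$ in closed form and read off both identities from it. Writing $\mfieldt=\ddev\nabla\vfieldv=\nabla\vfieldv-\frac{1}{3}(\ddiv\vfieldv)\mfieldI$ and applying $\ccurl$ row-wise, the contribution of $\nabla\vfieldv$ vanishes because each of its rows is a gradient $\nabla v_i$ and $\ccurl\nabla v_i=0$. For the remaining term, the $i$-th row of $(\ddiv\vfieldv)\mfieldI$ is the vector $(\ddiv\vfieldv)e_i$, whose curl is $\nabla(\ddiv\vfieldv)\times e_i$, so the $i$-th row of $\ccurl\mfieldt$ is $-\frac{1}{3}\nabla(\ddiv\vfieldv)\times e_i$. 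Comparing these three rows against the rows of ${\rm mspn}$, I expect to obtain
\begin{align*}
\ccurl\mfieldt=\frac{1}{3}\,{\rm mspn}(\nabla\ddiv\vfieldv),
\end{align*}
a skew-symmetric matrix; the only care needed here is to keep the sign of ${\rm mspn}$ straight under the row-wise convention.

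Given this closed form, the first identity is immediate: since $\ccurl\mfieldt$ is skew-symmetric, $\sym\ccurl\mfieldt=0$ identically and hence $\nvec_i^T\sym\ccurl\mfieldt\nvec_j=0$ for all $i,j$. Equivalently, this is just the composition law $\sym\ccurl\circ\ddev\nabla=0$ already encoded in the polynomial complex \eqref{eq:complex3Dpol}, so one may either cite that or use the explicit skew-symmetry.

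For the second identity I would evaluate the two terms separately, writing $g:=\ddiv\vfieldv$ and using that the edge tangent $\tvec$ is a constant vector along the straight edge $e$ (so that $\nvec_1\times\nvec_2=\tvec$ throughout). From ${\rm mspn}(\vfieldw)\vfieldu=\vfieldw\times\vfieldu$, the scalar triple product, and $\nvec_2\times\nvec_1=-\tvec$, I get
\begin{align*}
\nvec_1^T\ccurl\mfieldt\nvec_2=\frac{1}{3}\,\nvec_1\cdot(\nabla g\times\nvec_2)=\frac{1}{3}\,\nabla g\cdot(\nvec_2\times\nvec_1)=-\frac{1}{3}\,\partial_t g.
\end{align*}
For the other term, $\tvec^T\mfieldt\tvec=\tvec^T(\nabla\vfieldv)\tvec-\frac{1}{3}g$, and since $\tvec$ is constant, $\tvec^T(\nabla\vfieldv)\tvec=\sum_i t_i\,\partial_t v_i=\partial_t(\vfieldv\cdot\tvec)$; differentiating along $e$ yields $\partial_t(\tvec^T\mfieldt\tvec)=\partial^2_{tt}(\vfieldv\cdot\tvec)-\frac{1}{3}\partial_t g$. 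Subtracting the two displays, the $\frac{1}{3}\partial_t g$ contributions cancel and leave exactly $-\partial^2_{tt}(\vfieldv\cdot\tvec)$, as claimed.

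The calculations are elementary once the closed form for $\ccurl\mfieldt$ is in hand; the only steps demanding care are the row-wise $\ccurl$ convention (to fix the sign of ${\rm mspn}$) and the orientation bookkeeping in the scalar triple product. The step carrying the real content is recognizing that $\ccurl(\ddev\nabla\vfieldv)$ is purely skew-symmetric with its entries governed by $\nabla\ddiv\vfieldv$: this simultaneously trivializes the first identity and supplies the $-\frac{1}{3}\partial_t g$ term whose cancellation against the corresponding term in $\partial_t(\tvec^T\mfieldt\tvec)$ produces the clean right-hand side of the second.
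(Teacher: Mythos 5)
Your proof is correct and takes essentially the same route as the paper's: both reduce to the decomposition $\ddev\nabla \vfieldv=\nabla \vfieldv-\frac{1}{3}(\ddiv \vfieldv)\mfieldI$, note that the curl of $\nabla\vfieldv$ vanishes (so \eqref{eq:edgegradone} is just $\sym\ccurl\circ\ddev\nabla=0$), and then evaluate the curl of the trace part along the edge, the paper via an inline cross-product/divergence identity and you via the closed form $\ccurl\mfieldt=\frac{1}{3}{\rm mspn}(\nabla\ddiv\vfieldv)$, which is the very identity the paper itself invokes later in Theorem~\ref{thm:disdivdivcomplex}. Your bookkeeping of the cancellation of the $\frac{1}{3}\partial_t(\ddiv\vfieldv)$ terms matches the paper's conclusion exactly (and is, if anything, cleaner in its sign management).
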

\begin{proof}
Given $e$, $\sym\ccurl\mfieldt=0$ results in \eqref{eq:edgegradone}. Note that
\begin{align*}
\nvec_1^T\ccurl \mfieldt\nvec_{2} -\partial_t(\tvec^T\mfieldt \tvec)=\frac{1}{3}\nvec_1^T\ccurl(\ddiv \vfieldv \mfieldI)\nvec_2-\partial_t(\tvec^T\nabla \vfieldv \tvec-\frac{1}{3}\ddiv\vfieldv).
\end{align*}
The cross product rule plus  $\tvec=\nvec_1\times \nvec_2$ lead to
\begin{align*}
\nvec_1^T\ccurl(\ddiv \vfieldv \mfieldI)\nvec_2&=\ccurl\big((\ddiv \vfieldv) \nvec_1\big) \cdot \nvec_2=\ddiv\big((\ddiv \vfieldv) \nvec_2\times \nvec_1\big)\\
&=-\ddiv((\ddiv\vfieldv) \tvec)=-\partial_t(\ddiv\vfieldv).
\end{align*}
The previous two identities conclude \eqref{eq:edgegradtwo}.
\end{proof}

\begin{theorem}\label{thm:elementbubbleseq}
For $k\geq 3$, it holds that
\begin{align*}\Scale[0.9]{
0\xrightarrow[]{ \subset} B_{k +2,\ddev\nabla}(K)\xrightarrow[]{ \ddev\nabla} B_{k +1,\sym\ccurl}(K) \xrightarrow[]{ \sym\ccurl} B_{k  ,\ddiv\ddiv}(K)\xrightarrow[]{ \ddiv\ddiv}P_{k-1}(K)/P_1(K)\rightarrow 0.}
\end{align*}
\end{theorem}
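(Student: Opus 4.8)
The plan is to treat this as a short exact sequence of finite-dimensional spaces and prove exactness by combining the surjectivity/kernel structure of the polynomial complex \eqref{eq:complex3Dpol} with the kernel characterizations already extracted in the unisolvence proofs, closing the argument with a dimension count exactly as in the two-dimensional Lemma~\ref{thm:facerotrotseq}. First I would record that the sequence is indeed a complex (consecutive compositions vanish because they already vanish in \eqref{eq:complex3Dpol}) and that each operator maps the indicated bubble space into the next. The inclusion $\ddev\nabla B_{k+2,\ddev\nabla}(K)\subset B_{k+1,\sym\ccurl}(K)$ is the remark preceding the theorem, i.e. Lemma~\ref{lem:restrictone} together with part~(b) of Lemma~\ref{lem:symcurlrestri}. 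For $\sym\ccurl B_{k+1,\sym\ccurl}(K)\subset B_{k,\ddiv\ddiv}(K)$ I would invoke the restriction identities \eqref{eq:facecurlone}--\eqref{eq:facecurltwo}: when $\mfieldt$ has vanishing vertex, edge and face degrees of freedom, the tensor $\mfieldxi=\sym\ccurl\mfieldt$ has vanishing value at vertices, vanishing $\nvec_i^T\mfieldxi\nvec_j$ on edges, vanishing normal--normal trace on faces, and vanishing $2\ddiv_f(\mfieldxi\nvec)+\partial_n(\nvec^T\mfieldxi\nvec)$ on faces, so it is an $H(\ddiv\ddiv)$ bubble. That the last map lands in the terminal quotient follows by integrating by parts twice against any $q\in P_1(K)$: its Hessian vanishes, so $\int_K(\ddiv\ddiv\mfieldxi)q$ reduces to boundary terms that vanish for a bubble.

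Next I would handle the two left-most homologies. Injectivity of $\ddev\nabla$ is immediate, since its kernel is $RT$ by \eqref{eq:complex3Dpol} while every nonzero element of $B_{k+2,\ddev\nabla}(K)=\lambda_1\lambda_2\lambda_3\lambda_4 P_{k-2}(K;\R^3)$ has degree $\geq 4$, whence the intersection with $RT$ is trivial. For exactness at $B_{k+1,\sym\ccurl}(K)$ I would take $\mfieldt$ in this bubble space with $\sym\ccurl\mfieldt=0$; the polynomial complex yields $\mfieldt=\ddev\nabla\vfieldu$ for some $\vfieldu\in P_{k+2}(K;\R^3)$, and I would reuse the closing part of the proof of Theorem~\ref{thm:Hsymcurluni}: the vanishing boundary degrees of freedom of $\mfieldt$ force, via part~(b) of Lemma~\ref{lem:symcurlrestri}, $\vfieldu\cdot\nvec=0$ and $\epsilon_f(\vfieldu\times\nvec)=0$ on each face, hence $\vfieldu=0$ on $\partial K$ and $\vfieldu\in B_{k+2,\ddev\nabla}(K)$. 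This gives $\ker(\sym\ccurl)\cap B_{k+1,\sym\ccurl}(K)=\ddev\nabla B_{k+2,\ddev\nabla}(K)$.

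The main obstacle is exactness at $B_{k,\ddiv\ddiv}(K)$. Given $\mfieldxi$ in this bubble space with $\ddiv\ddiv\mfieldxi=0$, the polynomial complex produces $\mfieldt\in P_{k+1}(K;\Tbb)$ with $\sym\ccurl\mfieldt=\mfieldxi$, and the issue is to correct $\mfieldt$ by some $\ddev\nabla\vfieldu$ so that its boundary degrees of freedom vanish while $\mfieldxi=\sym\ccurl\mfieldt$ is unchanged. Here I would use that, $\mfieldxi$ being a bubble, part~(a) of Lemma~\ref{lem:symcurlrestri} gives $\rrot_f\Pif(\mfieldt^T\nvec)=0$ and $\rrot_f\rrot_f\Pifsym(\mfieldt\times\nvec)=0$ on each face; the two-dimensional de Rham and strain complexes \eqref{eq:complex2Da}--\eqref{eq:complex2Db} then exhibit $\Pif(\mfieldt^T\nvec)$ as a surface gradient and $\Pifsym(\mfieldt\times\nvec)$ as a surface symmetric gradient, which by part~(b) of Lemma~\ref{lem:symcurlrestri} are precisely the face traces of $\ddev\nabla$ of a suitable vector field. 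Subtracting such a correction removes the face and edge data of $\mfieldt$ and places the corrected tensor in $B_{k+1,\sym\ccurl}(K)$. The delicate point is to ensure that the face-by-face corrections, together with the vertex values and first derivatives, are the traces of a single global $\ddev\nabla\vfieldu$; this compatibility across faces sharing an edge or vertex is where the extra continuity at vertices built into the elements of Section~\ref{sec:FEM2D} is used.

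Finally I would close with a dimension count. Computing $\dim B_{k+2,\ddev\nabla}(K)$, $\dim B_{k+1,\sym\ccurl}(K)$ and $\dim B_{k,\ddiv\ddiv}(K)$ from their interior degrees of freedom (the first two via \eqref{eq:bubsymcurl} and the interior part of Theorem~\ref{thm:Hsymcurluni}, the last via the $H(\ddiv\ddiv)$ interior degrees of freedom of \cite{ChenHuang20203D}) and verifying that the alternating sum equals the dimension of the terminal quotient space, just as in Lemma~\ref{thm:facerotrotseq}, forces surjectivity of the last map once injectivity at the start and the two exactness statements above are in hand. I expect the boundary-correction step of the previous paragraph to be the only genuinely nontrivial part; everything else is either a direct consequence of \eqref{eq:complex3Dpol} or a transcription of the already-proven unisolvence arguments.
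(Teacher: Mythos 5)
Your first three steps (the complex property, the inclusions via Lemma~\ref{lem:restrictone} and Lemma~\ref{lem:symcurlrestri}, injectivity of $\ddev\nabla$, and exactness at $B_{k+1,\sym\ccurl}(K)$ by reusing the end of the proof of Theorem~\ref{thm:Hsymcurluni}) coincide with the paper's argument. The genuine gap is exactly where you flag it: exactness at $B_{k,\ddiv\ddiv}(K)$. Your plan --- take a polynomial preimage $\mfieldt$ of a kernel bubble $\mfieldxi$ via \eqref{eq:complex3Dpol} and correct it by $\ddev\nabla\vfieldu$ so that it becomes a $\sym\ccurl$-bubble --- is never carried out, and it is not a routine verification. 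The face data only determine potentials up to kernels ($\Pif(\mfieldt^T\nvec)=\nabla_f\phi_f$ with $\phi_f$ up to a constant, $\Pifsym(\mfieldt\times\nvec)=\epsilon_f(\vfieldw_f)$ with $\vfieldw_f$ up to a planar rigid motion), and these face-by-face potentials must be assembled into a single $\vfieldu\in P_{k+2}(K;\R^3)$ whose restrictions agree along every shared edge; on top of that, the corrected tensor must have vanishing vertex values and first derivatives \ref{enu:HScurla}, vanishing edge traces \ref{enu:HScurlb}, and vanishing quantities \ref{enu:HScurlc}, none of which follows from matching the two face traces alone. Invoking ``extra continuity at vertices'' does not resolve this; it is precisely the compatibility problem that makes discrete complexes hard, and since your final dimension count needs this middle exactness as an input before it can deliver surjectivity of $\ddiv\ddiv$, the whole proposal is blocked at this point.

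The paper avoids the correction argument entirely by running your dimension count in the opposite direction. It imports the surjectivity statement $\ddiv\ddiv B_{k,\ddiv\ddiv}(K)=P_{k-2}(K)/P_1(K)$ from \cite{ChenHuang20203D} as a known result; rank--nullity then gives
\begin{align*}
\dim\big(B_{k,\ddiv\ddiv}(K)\cap\ker(\ddiv\ddiv)\big)=\dim B_{k,\ddiv\ddiv}(K)-\dim P_{k-2}(K)/P_1(K)
=\tfrac{k(k-1)(5k+14)}{6}+\tfrac{k(k-1)}{2},
\end{align*}
which is exactly $\dim\sym\ccurl B_{k+1,\sym\ccurl}(K)=\dim B_{k+1,\sym\ccurl}(K)-\dim B_{k+2,\ddev\nabla}(K)$ computed from the exactness you already established at the previous spot. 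Since you have the inclusion $\sym\ccurl B_{k+1,\sym\ccurl}(K)\subseteq B_{k,\ddiv\ddiv}(K)\cap\ker(\ddiv\ddiv)$, equal dimensions force equality, i.e.\ exactness in the middle, and surjectivity of the last map is the cited result itself rather than a consequence. So the fix is structural: do not try to prove middle exactness constructively and deduce surjectivity; cite surjectivity and deduce middle exactness. (Also note the terminal space should read $P_{k-2}(K)/P_1(K)$, consistent with \eqref{eq:complex3Dpol}; the $P_{k-1}(K)/P_1(K)$ in the theorem statement is a typo.)
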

\begin{proof}
Lemma~\ref{lem:restrictone} plus (b) of  Lemma \ref{lem:symcurlrestri} show  the inclusion $\ddev\nabla B_{k +2,\ddev\nabla}(K)\subset B_{k +1,\sym\ccurl}(K)$ and the proof of Theorem \ref{thm:Hsymcurluni} shows the inclusion $\sym\ccurl B_{k +1,\sym\ccurl}(K)\subset B_{k  ,\ddiv\ddiv}(K)$.
 Suppose $\mfieldt\in \Lambda_{ k +1,0}(K)$ and $\sym\ccurl\mfieldt=0$.
The proof of Theorem~\ref{thm:Hsymcurluni} shows that there exists some  $u\in B_{k +2,\ddev\nabla}(K)$ such that $\mfieldt=\ddev\nabla u$.

The previous arguments show  $B_{k +1,\sym\ccurl}(K)\cap\ker(\sym\ccurl)=\ddev\nabla B_{k +2,\ddev\nabla}(K)$. This also means
\begin{align*}
\dim\sym\ccurl B_{k +1,\sym\ccurl}(K)&=\dim B_{k +1,\sym\ccurl}(K)-\dim B_{k +2,\ddev\nabla}(K)\\
&=\frac{k (k -1)(5k +14)}{6}+\frac{k (k -1)}{2}.
\end{align*}
It has been proved in \cite{ChenHuang20203D} that  $\ddiv\ddiv B_{k  ,\ddiv\ddiv}(K)=P_{k -2}(K)/P_1(K)$. This implies
\begin{align*}
\dim\big(B_{k  ,\ddiv\ddiv}(K)\cap\ker(\ddiv\ddiv)\big)&=\dim B_{k  ,\ddiv\ddiv}(K)-\dim P_{k -2}(K)/P_1(K)\\
&=\frac{k (k -1)(5k +14)}{6}+\frac{k (k -1)}{2}.
\end{align*}
This concludes that the complexes are exact.
\end{proof}

The following theorem states the finite element $\ddiv\ddiv$ complexes with respect to \eqref{eq:complex3D}.
\begin{theorem}\label{thm:disdivdivcomplex}
For $k\geq 3$, it holds that
\begin{align*}
RT\xrightarrow[]{ \subset}V_{k +2,h} \xrightarrow[]{ \ddev\nabla}\Lambda_{ k +1,h}  \xrightarrow[]{ \sym\ccurl}\Sigma_{k ,h}  \xrightarrow[]{ \ddiv\ddiv} P_{k-2}(\T)\rightarrow 0.
\end{align*}
\end{theorem}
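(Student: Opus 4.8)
The plan is to show first that the sequence is a complex and then to establish exactness at each slot, reducing the bookkeeping by an Euler--Poincar\'e argument. That the compositions vanish, $\sym\ccurl\circ\ddev\nabla=0$ and $\ddiv\ddiv\circ\sym\ccurl=0$, is inherited pointwise from the exact polynomial complex \eqref{eq:complex3Dpol}; the inclusions $\ddev\nabla V_{k+2,h}\subset\Lambda_{k+1,h}$, $\sym\ccurl\Lambda_{k+1,h}\subset\Sigma_{k,h}$ and $\ddiv\ddiv\Sigma_{k,h}\subset P_{k-2}(\T)$ follow from the single-valuedness of the degrees of freedom together with Lemma~\ref{lem:restrictone} and part~(b) of Lemma~\ref{lem:symcurlrestri} (already used to record $\ddev\nabla B_{k+2,\ddev\nabla}(K)\subset B_{k+1,\sym\ccurl}(K)$). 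Viewing the complex as $0\to RT\to V_{k+2,h}\to\Lambda_{k+1,h}\to\Sigma_{k,h}\to P_{k-2}(\T)\to 0$, the Euler--Poincar\'e identity shows that if I verify exactness at $RT$, at $V_{k+2,h}$, at $P_{k-2}(\T)$, and at one of the two interior slots, then the alternating sum of the cohomology dimensions forces exactness at the remaining interior slot as well; here the required vanishing of the alternating sum of the space dimensions is a direct count from the unisolvence dimension formulas in Theorems~\ref{thm:Hsymcurluni} and \ref{thm:Honeuni}, from \cite{ChenHuang20203D}, together with the topological Euler relation $\#\Vcal-\#\Ecal+\#\Fcal-\#\Tcal=1$ for the contractible triangulation.

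The two endpoints are the easy part. Exactness at $RT$ and at $V_{k+2,h}$ amounts to $\ker(\ddev\nabla)\cap V_{k+2,h}=RT$: since $V_{k+2,h}\subset H^1(\Om;\R^3)$ and $\Om$ is connected, $\ddev\nabla\vfieldv=0$ forces $\vfieldv\in RT$ exactly as in the continuous complex \eqref{eq:complex3D}, and $RT\subset V_{k+2,h}$. Surjectivity of $\ddiv\ddiv\colon\Sigma_{k,h}\to P_{k-2}(\T)$ is part of the $H(\ddiv\ddiv)$ analysis of \cite{ChenHuang20203D}; alternatively it follows by combining the element surjectivity $\ddiv\ddiv B_{k,\ddiv\ddiv}(K)=P_{k-2}(K)/P_1(K)$ of Theorem~\ref{thm:elementbubbleseq} with the global lifting supplied by the exact complex \eqref{eq:complex3D}, the face degrees of freedom \ref{enu:Hdivdivd} controlling exactly $2\ddiv_f(\mfieldt\nvec)+\partial_n(\nvec^T\mfieldt\nvec)$.

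The heart of the proof is exactness at the interior, say at $\Lambda_{k+1,h}$: I must show that every $\mfieldt\in\Lambda_{k+1,h}$ with $\sym\ccurl\mfieldt=0$ has the form $\ddev\nabla\vfieldv$ for some $\vfieldv\in V_{k+2,h}$. Exactness of the continuous complex \eqref{eq:complex3D} on the contractible $\Om$ furnishes $\vfieldv\in H^1(\Om;\R^3)$ with $\ddev\nabla\vfieldv=\mfieldt$; the real work is to replace $\vfieldv$ by a member of $V_{k+2,h}$ without changing $\ddev\nabla\vfieldv$. I would build such a $\vfieldv$ by a vertex--edge--face--interior sweep, matching the single-valued degrees of freedom \ref{enu:Honea}--\ref{enu:Honed} of $V_{k+2,h}$ to the skeleton data of $\mfieldt$ through the restriction identities of Lemma~\ref{lem:symcurlrestri}(b), namely $\Pif(\mfieldt^T\nvec)=\nabla_f(\vfieldv\cdot\nvec)$ and $\Pifsym(\mfieldt\times\nvec)=\epsilon_f(\vfieldv\times\nvec)$, and then annihilating the residual element by element. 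These corrections are well defined and mutually compatible precisely because the two-dimensional de Rham and strain complexes of Section~\ref{sec:FEM2D} are exact on each face and the element bubble complex of Theorem~\ref{thm:elementbubbleseq} is exact in each tetrahedron: on a face the data of $\mfieldt$ are closed for the surface operators, hence a surface gradient and surface symmetric gradient of face data for $\vfieldv$, fixing the face degrees of freedom consistently, while the interior discrepancy is killed by $B_{k+2,\ddev\nabla}(K)\xrightarrow{\ddev\nabla}B_{k+1,\sym\ccurl}(K)$. This gluing across shared faces and edges---ensuring the locally chosen potentials agree where tetrahedra meet---is the step I expect to be most delicate, and it is exactly where the extra vertex continuity built into the elements of Section~\ref{sec:FEM2D} is needed. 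Once exactness at $\Lambda_{k+1,h}$ is in hand, exactness at $\Sigma_{k,h}$ follows from the dimension identity of the first paragraph, completing the proof.
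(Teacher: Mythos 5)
Your overall skeleton coincides with the paper's: establish the inclusions so the sequence is a complex, prove exactness at $\Lambda_{k+1,h}$ directly, quote surjectivity of $\ddiv\ddiv$ from \cite{ChenHuang20203D}, and obtain exactness at $\Sigma_{k,h}$ by a dimension count with Euler's formula (your Euler--Poincar\'e phrasing is an equivalent repackaging of that count). However, at the one step you yourself call the heart of the proof --- exactness at $\Lambda_{k+1,h}$ --- there is a genuine gap. You propose to \emph{construct} a discrete potential by a vertex--edge--face--interior sweep, matching skeleton data and then gluing local corrections across shared faces and edges, and you explicitly leave the gluing (``the step I expect to be most delicate'') unresolved. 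That gluing is precisely the hard content, and nothing in your sketch supplies it: local potentials on neighboring tetrahedra are each determined only up to an $RT$ field, and making the sweep produce a single-valued $H^1$ function with matching vertex derivatives is not a routine consequence of the two-dimensional exact complexes on faces plus the bubble complex of Theorem~\ref{thm:elementbubbleseq}.

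The paper's key observation is that no such construction is needed, because the \emph{global} continuous potential is already the discrete one. Given $\mfieldt\in\Lambda_{k+1,h}$ with $\sym\ccurl\mfieldt=0$, exactness of \eqref{eq:complex3D} yields one $\vfieldu\in H^1(\Omega;\R^3)/RT$ with $\ddev\nabla\vfieldu=\mfieldt$; by the argument of \cite[Lemma 3.2]{ChenHuang20203D} (polynomial exactness elementwise) this $\vfieldu$ is automatically in $P_{k+2}(K;\R^3)$ on every $K$, and it is globally continuous since it lies in $H^1$. The only thing left to check is the \emph{extra} vertex continuity of $\nabla\vfieldu$ and $\nabla^2\vfieldu$ required by the degrees of freedom \ref{enu:Honea}, and this is read off from $\mfieldt$ rather than enforced by correction: the identity ${\rm mspn}(\nabla\ddiv\vfieldu)=3\ccurl\mfieldt$ transfers the single-valuedness of $\mfieldt$ and $\nabla\mfieldt$ at vertices (degrees of freedom \ref{enu:HScurla}) into continuity of $\nabla(\ddiv\vfieldu)$, hence of $\nabla^2\vfieldu$, at vertices --- this identity controls exactly the trace part of $\nabla\vfieldu$ that $\ddev$ discards and that your proposal never accounts for --- while the face identities $\Pif(\mfieldt^T\nvec)=\nabla_f(\vfieldu\cdot\nvec)$ and $\Pifsym(\mfieldt\times\nvec)=\epsilon_f(\vfieldu\times\nvec)$ of Lemma~\ref{lem:symcurlrestri}(b) give continuity of $\nabla\vfieldu$ at vertices. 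With that, $\vfieldu\in V_{k+2,h}$, $\Lambda_{k+1,h}\cap\ker(\sym\ccurl)=\ddev\nabla V_{k+2,h}$, and the remainder of your argument (which matches the paper's) goes through. Without this mechanism, or a completed gluing argument replacing it, your proof of the central exactness statement is incomplete.
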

\begin{proof}Similar arguments as in the proof of Theorem \ref{thm:elementbubbleseq} show the inclusions $\ddev\nabla V_{k +2,h}\subset\Lambda_{ k +1,h}$ and  $\sym\ccurl\Lambda_{ k +1,h}\subset\Sigma_{k , h}$.
 Suppose $\mfieldt\in \Lambda_{k +1,h } $ and $\sym\ccurl\mfieldt=0$.  
  The continuous complex \eqref{eq:complex3D} shows that there exists $\vfieldu\in H^1(\Omega;\R^3)/ RT$ such that
\begin{align*}
\mfieldt=\ddev\nabla \vfieldu.
\end{align*}
As shown in the proof of \cite[Lemma 3.2]{ChenHuang20203D}, $\vfieldu\in   P_{k +2}(K;\R^3)$ for any $K\in\Tcal$. Since $\vfieldu$ is a discrete function, $\vfieldu$ is continuous at vertices, and on edges and faces. It only remains to prove the extra continuity of the first and second order derivatives of $\vfieldu$ at vertices.  It is   shown in \cite[Lemma 3.2]{ChenHuang20203D}
\begin{align*}
{\rm mspn\ }(\nabla \ddiv \vfieldu)=3\ccurl\mfieldt.
\end{align*}
This leads to the continuity of $\nabla (\ddiv \vfieldu)$ at vertices and hence leads to the continuity of $\nabla^2\vfieldu$  at vertices. Given $f\in\Fcal$, the continuity of $\nabla_f(\vfieldu\cdot\nvec)$ and $\epsilon_f(\vfieldu\times\nvec)$ imply the continuity of $\nabla \vfieldu$ at vertices. The previous arguments lead to $\vfieldu\in V_{k+2,h}$ and consequently  $\Lambda_{ k +1,h}\cap\ker(\sym\ccurl)=\ddev\nabla V_{k +2,h}$. This also means
\begin{align*}
\dim\sym\ccurl &\Lambda_{ k +1,h}=\dim \Lambda_{ k +1,h}-\dim V_{k +2,h}/RT\\
&=2\#\Vcal+ (3k +1)\#\Ecal+(k ^2- k -3)\#\Fcal+\frac{ 5k ^3+12k ^2-17k }{6}\#\Tcal+4. 
\end{align*}
It  has been proved in \cite{ChenHuang20203D} that $\ddiv\ddiv\Sigma_{k  ,h} =P_{k -2} (\T)$. This shows
\begin{align*}
\dim&\big(\Sigma_{k ,h} \cap\ker(\ddiv\ddiv)\big)=\dim\Sigma_{k ,h} -\dim Q_{k -2,h} \\
=& 6\#\Vcal+(3k -3)\#\Ecal+(k ^2-k +1)\#\Fcal +\frac{ 5k ^3+12k ^2-17k -24}{6}\#\Tcal.
\end{align*}
The Euler's formula is $ \# V- \#E+ \#F- \#K-1=0$ for simple domains.
Since $\sym\ccurl\Lambda_{ k +1,h}\subseteq\Sigma_{k  ,h}\cap\ker(\ddiv\ddiv)$, this concludes that the complexes are exact.
\end{proof}

\section{The Dual formulation of the  linearized Einstein-Bianchi system}\label{sec:EB}This sections considers the discretization of \eqref{eq:strongformEB} in a   weak formulation, which is dual to the formulation in \cite{huliang2020}. The error analysis is provided by following similar arguments  as in \cite{huliang2020}. 
\subsection{Weak formulation}
The dual formulation of the linearized Einstein-Bianchi system introduced in \cite{huliang2020} reads: Find 
\begin{align}
\begin{aligned}
\sigma&\in C^1\big([0,T],L^2(\Omega)\big),\\
\mfieldE&\in C^0\big([0,T], H(\ddiv\ddiv,\Omega;\Sbb)\big)\cap C^1\big([0,T],L^2(\Omega;\Sbb)\big),\\
\mfieldB&\in C^0\big([0,T],H(\sym\ccurl,\Tbb)\big)\cap C^1\big([0,T],L^2(\Omega,\Tbb)\big),
\end{aligned}
\end{align}
such that
\begin{align}\label{eq:EinBian}
\begin{cases}
(\dot{\sigma},q)=(\ddiv\ddiv\mfieldE,q), &\text{ for any }q\in L^2(\Omega),\\
(\dot{\mfieldE},\mfieldxi)=-(\sigma,\ddiv\ddiv\mfieldxi)-(\sym\ccurl\mfieldB,\mfieldxi),&\text{ for any }\mfieldxi\in H(\ddiv\ddiv,\Omega;\Sbb),\\
(\dot{\mfieldB},\mfieldzeta)=(\mfieldE,\sym\ccurl\mfieldzeta),&\text{ for any }\mfieldzeta\in H(\sym\ccurl,\Omega;\Tbb).
\end{cases}
\end{align}

For $k\geq 3$, the semidiscretization of \eqref{eq:EinBian} finds
\begin{align*}
\sigma_h\in C^1\big([0,T ],P_{k-2}(\T)\big),\;\mfieldE_h\in  C^0\big([0,T ],\Sigma_{k,h}\big) \text{ and } \mfieldB_h\in  C^0\big([0,T ],\Lambda_{k+1,h}\big)
\end{align*}
such that
\begin{align}\label{eq:discreteEinBian}
\begin{cases}
(\dot{\sigma_h},q)=(\ddiv\ddiv\mfieldE_h,q), &\text{ for any }q\in P_{k-2}(\T),\\
(\dot{\mfieldE_h},\mfieldxi)=-(\sigma_h,\ddiv\ddiv\mfieldxi)-(\sym\ccurl\mfieldB_h,\mfieldxi),&\text{ for any }\mfieldxi\in \Sigma_{k,h},\\
(\dot{\mfieldB_h},\mfieldzeta)=(\mfieldE_h,\sym\ccurl\mfieldzeta),&\text{ for any }\mfieldzeta\in \Lambda_{k+1,h},
\end{cases}
\end{align}
for all $t\in (0,T]$, with given initial data.
\begin{theorem}
There exists a unique solution to \eqref{eq:discreteEinBian}.
\end{theorem}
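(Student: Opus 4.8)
The plan is to observe that \eqref{eq:discreteEinBian} is, after a choice of bases, a linear system of ordinary differential equations with \emph{constant} coefficients on a finite-dimensional space, and then to invoke the standard existence and uniqueness theory for such systems, recording separately the structural energy identity that underlies uniqueness and stability.

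First I would fix bases of the three finite-dimensional spaces $P_{k-2}(\T)$, $\Sigma_{k,h}$ and $\Lambda_{k+1,h}$, and represent $\sigma_h$, $\mfieldE_h$, $\mfieldB_h$ through their time-dependent coefficient vectors $\vec\sigma(t)$, $\vec E(t)$, $\vec B(t)$. Testing each equation of \eqref{eq:discreteEinBian} against the basis functions turns the system into
\[ M_\sigma\,\dot{\vec\sigma}=B\vec E,\quad M_E\,\dot{\vec E}=-B^{T}\vec\sigma-C\vec B,\quad M_B\,\dot{\vec B}=C^{T}\vec E, \]
where $M_\sigma$, $M_E$, $M_B$ are the Gram (mass) matrices of the $L^2(\Omega)$ inner product on the three spaces, and $B$, $C$ encode the pairings $(\ddiv\ddiv\,\cdot\,,\cdot)$ and $(\sym\ccurl\,\cdot\,,\cdot)$; the transposes appear precisely because each of these two pairings is evaluated with its arguments swapped between the two spaces in \eqref{eq:discreteEinBian}. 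Since each $M$ is the Gram matrix of a basis with respect to a genuine inner product, it is symmetric positive definite, hence invertible.

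Because the mass matrices are invertible, the system is equivalent to $\dot{\vec y}=A\vec y$ with $\vec y=(\vec\sigma,\vec E,\vec B)^{T}$ and a constant matrix $A$ assembled from $M_\sigma^{-1}B$, $M_E^{-1}B^{T}$, $M_E^{-1}C$ and $M_B^{-1}C^{T}$. A constant-coefficient linear ODE has, for each prescribed initial datum, a unique solution that exists on all of $[0,T]$ by the Picard--Lindel\"of theorem, which proves the claim.

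Finally, I would record the structural reason behind uniqueness directly at the variational level, since it also yields stability: testing the three equations of \eqref{eq:discreteEinBian} with $q=\sigma_h$, $\mfieldxi=\mfieldE_h$ and $\mfieldzeta=\mfieldB_h$ and adding gives
\[ \frac12\frac{d}{dt}\big(\|\sigma_h\|^2+\|\mfieldE_h\|^2+\|\mfieldB_h\|^2\big)=0, \]
because $(\ddiv\ddiv\mfieldE_h,\sigma_h)$ cancels $(\sigma_h,\ddiv\ddiv\mfieldE_h)$ and $(\sym\ccurl\mfieldB_h,\mfieldE_h)$ cancels $(\mfieldE_h,\sym\ccurl\mfieldB_h)$. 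Thus the discrete energy is conserved, and for two solutions with identical data their difference solves the homogeneous problem with zero initial energy and hence vanishes identically. There is no genuine analytic obstacle here---the statement is essentially a corollary of finite-dimensional ODE theory---so the only points requiring care are the invertibility of the mass matrices and the skew-symmetric cancellation above, both immediate from the definitions.
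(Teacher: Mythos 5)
Your proof is correct and follows essentially the same route as the paper, which simply defers to the standard argument (Theorem 6.1 of the cited work): reduction to a constant-coefficient linear ODE system via bases, invertibility of the $L^2$ mass matrices, and finite-dimensional existence--uniqueness theory. Your added energy identity, exploiting the skew-symmetric cancellation of the $(\ddiv\ddiv\cdot,\cdot)$ and $(\sym\ccurl\cdot,\cdot)$ pairings, is a nice explicit record of the stability mechanism but is not needed beyond what the ODE argument already gives.
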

\begin{proof}
The proof follows the same argument as in \cite[Theorem 6.1]{huliang2020}.
\end{proof}

\bigskip
Below investigates the convergence of the discrete solutions. Let $\Vspace:=L^2(\Omega)\times H(\ddiv\ddiv,\Omega;\Sbb)\times H(\sym\ccurl,\Omega;\Tbb)$ with the norm
\begin{align*}
\Lnorm{(q,\mfieldxi,\mfieldzeta)}_{\Vspace}:=\Lnorm{q}_{L^2(\Omega)}+\Lnorm{\mfieldxi}_{H(\ddiv\ddiv,\Omega)}+\Lnorm{\mfieldzeta}_{H(\sym\ccurl,\Omega)} 
\end{align*}
for  $(q,\mfieldxi,\mfieldzeta)\in\Vspace$. Let $\Vspaceh:=P_{k-2}(\T)\times\Sigma_{k,h}\times\Lambda_{k+1,h}$.
Define the bilinear form $\Acal: \Vspace\times \Vspace\rightarrow\R$ by
\begin{align*}
\Acal(\sigma,\mfieldE,\mfieldB;q,\mfieldxi,\mfieldzeta)=&(\sigma,q)+(\mfieldE,\mfieldxi)+(\mfieldB,\mfieldzeta)-(\ddiv\ddiv\mfieldE,q)+(\sigma,\ddiv\ddiv\mfieldxi)\\
&+(\sym\ccurl\mfieldB,\mfieldxi)-(E,\sym\ccurl\mfieldzeta).
\end{align*}
The following theorem shows the inf-sup condition of $\Acal$ in $\Vspace_h\times\Vspace_h$.

\begin{theorem}
The bilinear form $\Acal$ satisfies the inf-sup condition
\begin{align*}
\inf_{0\neq (\sigma,\mfieldE,\mfieldB)\in \Vspaceh}\sup_{0\neq (q,\mfieldxi,\mfieldzeta)\in \Vspaceh}\frac{\Acal(\sigma,\mfieldE,\mfieldB;q,\mfieldxi,\mfieldzeta)}{\Lnorm{(\sigma,\mfieldE,\mfieldB)}_{\Vspace}\Lnorm{(q,\mfieldxi,\mfieldzeta)}_{\Vspace}}=\beta>0
\end{align*}
with constant $\beta$ independent of $h$.
\end{theorem}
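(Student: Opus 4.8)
The plan is to exploit the structure of $\Acal$ as the $\Vspace$-inner product plus a skew-symmetric part. Testing $(\sigma,\mfieldE,\mfieldB)$ against itself annihilates the skew terms, since $-(\ddiv\ddiv\mfieldE,\sigma)+(\sigma,\ddiv\ddiv\mfieldE)=0$ and $(\sym\ccurl\mfieldB,\mfieldE)-(\mfieldE,\sym\ccurl\mfieldB)=0$, and leaves $\Lnorm{\sigma}_{L^2(\Omega)}^2+\Lnorm{\mfieldE}_{L^2(\Omega)}^2+\Lnorm{\mfieldB}_{L^2(\Omega)}^2$. This recovers the $L^2$ parts of the $\Vspace$-norm but not the graph-norm pieces $\Lnorm{\ddiv\ddiv\mfieldE}_{L^2(\Omega)}$ and $\Lnorm{\sym\ccurl\mfieldB}_{L^2(\Omega)}$, so the test triple must be augmented.

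Given $0\neq(\sigma,\mfieldE,\mfieldB)\in\Vspaceh$, I would take
\begin{align*}
q:=\sigma-\ddiv\ddiv\mfieldE,\quad \mfieldxi:=\mfieldE+\sym\ccurl\mfieldB,\quad \mfieldzeta:=\mfieldB.
\end{align*}
The decisive point is that this triple again lies in $\Vspaceh$: by Theorem~\ref{thm:disdivdivcomplex} one has $\ddiv\ddiv\mfieldE\in P_{k-2}(\T)$ and $\sym\ccurl\mfieldB\in\Sigma_{k,h}$, hence $q\in P_{k-2}(\T)$ and $\mfieldxi\in\Sigma_{k,h}$, while $\mfieldzeta\in\Lambda_{k+1,h}$. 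Substituting and using the complex property $\ddiv\ddiv\sym\ccurl=0$ (so that $\ddiv\ddiv\mfieldxi=\ddiv\ddiv\mfieldE$), a direct computation collapses all skew contributions and yields
\begin{align*}
\Acal(\sigma,\mfieldE,\mfieldB;q,\mfieldxi,\mfieldzeta)=&\Lnorm{\sigma}_{L^2(\Omega)}^2+\Lnorm{\mfieldE}_{L^2(\Omega)}^2+\Lnorm{\mfieldB}_{L^2(\Omega)}^2\\
&+\Lnorm{\ddiv\ddiv\mfieldE}_{L^2(\Omega)}^2+\Lnorm{\sym\ccurl\mfieldB}_{L^2(\Omega)}^2-(\sigma,\ddiv\ddiv\mfieldE)+(\mfieldE,\sym\ccurl\mfieldB).
\end{align*}

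Next I would absorb the two indefinite cross terms by Young's inequality, $|(\sigma,\ddiv\ddiv\mfieldE)|\le\frac{1}{2}\Lnorm{\sigma}_{L^2(\Omega)}^2+\frac{1}{2}\Lnorm{\ddiv\ddiv\mfieldE}_{L^2(\Omega)}^2$ and analogously for $(\mfieldE,\sym\ccurl\mfieldB)$, which leaves
\begin{align*}
\Acal(\sigma,\mfieldE,\mfieldB;q,\mfieldxi,\mfieldzeta)\ge\frac{1}{2}\big(\Lnorm{\sigma}_{L^2(\Omega)}^2+\Lnorm{\mfieldE}_{H(\ddiv\ddiv,\Omega)}^2+\Lnorm{\mfieldB}_{H(\sym\ccurl,\Omega)}^2\big)\gae\Lnorm{(\sigma,\mfieldE,\mfieldB)}_{\Vspace}^2.
\end{align*}
Finally I would verify the boundedness $\Lnorm{(q,\mfieldxi,\mfieldzeta)}_{\Vspace}\lae\Lnorm{(\sigma,\mfieldE,\mfieldB)}_{\Vspace}$ by the triangle inequality, where once more $\ddiv\ddiv\mfieldxi=\ddiv\ddiv\mfieldE$ keeps the $H(\ddiv\ddiv,\Omega)$-norm of $\mfieldxi$ under control; dividing the two displayed bounds produces the claimed constant $\beta>0$. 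Since the constants come only from Young's inequality and the triangle inequality and involve no inverse estimates, $\beta$ is automatically independent of $h$.

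The main obstacle — and the only place where the finite element construction enters essentially — is the membership of the augmented triple $(q,\mfieldxi,\mfieldzeta)$ in $\Vspaceh$ together with the cancellation $\ddiv\ddiv\sym\ccurl=0$. These are exactly the surjectivity $\ddiv\ddiv\Sigma_{k,h}=P_{k-2}(\T)$, the inclusion $\sym\ccurl\Lambda_{k+1,h}\subset\Sigma_{k,h}$, and the complex property furnished by Theorem~\ref{thm:disdivdivcomplex}; once they are available, the remainder is the standard perturbed-coercivity estimate and parallels the inf-sup argument in \cite{huliang2020}.
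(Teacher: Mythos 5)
Your proposal is correct and follows essentially the same route as the paper: the identical test triple $(q,\mfieldxi,\mfieldzeta)=(\sigma-\ddiv\ddiv\mfieldE,\,\mfieldE+\sym\ccurl\mfieldB,\,\mfieldB)$, membership in $\Vspaceh$ via Theorem~\ref{thm:disdivdivcomplex}, absorption of the two cross terms by Young's inequality, and the boundedness $\Lnorm{(q,\mfieldxi,\mfieldzeta)}_{\Vspace}\lesssim\Lnorm{(\sigma,\mfieldE,\mfieldB)}_{\Vspace}$. In fact you spell out two details the paper leaves implicit, namely the cancellation $\ddiv\ddiv\sym\ccurl=0$ in controlling $\ddiv\ddiv\mfieldxi$ and the explicit Young's inequality step behind the factor $\tfrac{1}{2}$.
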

\begin{proof}
For any $(\sigma,\mfieldE,\mfieldB)\in \Vspaceh$, the finite element complexes   in Theorem \ref{thm:disdivdivcomplex} show $(q,\mfieldxi,\mfieldzeta)=(\sigma-\ddiv\ddiv\mfieldE,\mfieldE+\sym\ccurl\mfieldB,\mfieldB)\in \Vspaceh$. Thus, there exists   some positive constant $C$ such that
\begin{align*}
&\Acal(\sigma,\mfieldE,\mfieldB;q,\mfieldxi,\mfieldzeta)=(\sigma,\sigma)+(\mfieldE,\mfieldE)+(\mfieldB,\mfieldB)-(\sigma,\ddiv\ddiv\mfieldE)+(\mfieldE,\sym\ccurl\mfieldB)\\
&+(\ddiv\ddiv\mfieldE,\ddiv\ddiv\mfieldE)+(\sym\ccurl\mfieldB,\sym\ccurl\mfieldB)\\
&\geq\frac{1}{2}\big(\Lnorm{\sigma}_{L^2(\Omega)}^2+\Lnorm{\mfieldE}_{L^2(\Omega)}^2+\Lnorm{\mfieldB}_{L^2(\Omega)}^2+\Lnorm{\ddiv\ddiv\mfieldE}_{L^2(\Omega)}^2+\Lnorm{\sym\ccurl\mfieldB}_{L^2(\Omega)}^2\big)\\
&\geq C\Lnorm{(\sigma,\mfieldE,\mfieldB)}^2_{\Vspace}.
\end{align*}
Since $\Lnorm{(q,\mfieldxi,\mfieldzeta)}_{\Vspace}\leq C\Lnorm{(\sigma,\mfieldE,\mfieldB)}_{\Vspace}$, this concludes the inf-sup condition with some constant $\beta>0$ independent of $h$.
\end{proof}

For any $(\sigma,\mfieldE,\mfieldB)\in \Vspace$, define the projection $\Pi_h(\sigma,\mfieldE,\mfieldB)\in\Vspaceh$ such that
\begin{align}\label{eq:probilinear}
\Acal(\Pi_h\sigma,\Pi_h\mfieldE,\Pi_h\mfieldB;q,\mfieldxi,\mfieldzeta)=\Acal(\sigma,\mfieldE,\mfieldB;q,\mfieldxi,\mfieldzeta)\text{ for any }(q,\mfieldxi,\mfieldzeta)\in\Vspaceh.
\end{align} with the full equivalent formulation  
\begin{align}
\label{eq:fullprobilinear}
\begin{cases}
(\Pi_h\sigma,q)\hspace{-0.5mm} -\hspace{-0.5mm} (\ddiv\ddiv\Pi_h\mfieldE ,q)=(\sigma,q) \hspace{-0.5mm} -\hspace{-0.5mm} (\ddiv\ddiv\mfieldE ,q), &\hspace{-0.5mm} \text{ for any }q\in P_{k-2}(\T),\\
(\Pi_h\mfieldE ,\mfieldxi)+(\Pi_h\sigma ,\ddiv\ddiv\mfieldxi)+(\sym\ccurl\Pi_h\mfieldB ,\mfieldxi)&\\
\quad\quad=( \mfieldE  ,\mfieldxi)+(\sigma ,\ddiv\ddiv\mfieldxi)+(\sym\ccurl\mfieldB ,\mfieldxi),&\hspace{-0.5mm} \text{ for any }\mfieldxi\in \Sigma_{k,h},\\
(\Pi_h\mfieldB ,\mfieldzeta)\hspace{-0.5mm} -\hspace{-0.5mm} (\mfieldE_h,\sym\ccurl\mfieldzeta)\hspace{-0.5mm} =\hspace{-0.5mm} ( \mfieldB,\mfieldzeta)\hspace{-0.5mm} -\hspace{-0.5mm} (\mfieldE ,\sym\ccurl\mfieldzeta), &\hspace{-0.5mm} \text{ for any }\mfieldzeta\in \Lambda_{k+1,h}.
\end{cases}
\end{align}
The following error estimate holds from the Babu\u{s}ka theory \cite{bbf}
\begin{align}
\label{eq:bestapr}
\Lnorm{(\sigma,\mfieldE,\mfieldB)-\Pi_h(\sigma,\mfieldE,\mfieldB)}_{\Vspace}\lesssim \inf_{(q,\mfieldxi,\mfieldzeta)\in\Vspaceh}\Lnorm{(\sigma,\mfieldE,\mfieldB)- (q,\mfieldxi,\mfieldzeta)}_{\Vspace}
\end{align}

Let $P_V: L^2(\Omega)\rightarrow P_{k-2}(\T)$ denote the $L^2$ projection onto $P_{k-2}(\T)$ . For any $0\leq s\leq k-1$, the estimate holds
\begin{align}
\label{eq:estLtwo}
\Lnorm{q-P_Vq}_{L^2(\Omega)}\lesssim h^{s}\Lnorm{q}_{H^{s}(\Omega)}\text{ for any }q\in H^{s}(\Omega).
\end{align} Let $P_\Sigma: H(\ddiv\ddiv,\Omega;\Sbb)\cap H^2(\Omega;\Sbb)\rightarrow \Sigma_{k,h} $ with $k\geq 3$ denote the interpolation indicated by  the degrees of freedom   \ref{enu:Hdivdiva}--\ref{enu:Hdivdivf} (the values at the vertices are obtained by averaging). An alternative interpolation can be found in \cite{ChenHuang20203D} with the commuting property. The following error estimate holds, for any $0\leq s\leq k-1$, that
\begin{align}
\label{eq:estdivdiv}
\Lnorm{\mfieldxi-P_\Sigma\mfieldxi}_{H(\ddiv\ddiv,\Omega)}\lesssim h^{s}\Lnorm{\mfieldxi}_{H^{s+2}(\Omega)}\text{ for any }\mfieldxi\in H^{s+2}(\Omega).
\end{align}
 Let $P_\Lambda:H(\sym\ccurl,\Omega;\Tbb)\cap H^3(\Omega;\Tbb)\rightarrow \Lambda_{k+1,h}$ with $k\geq 3$ denote the interpolation by  the degrees of freedom   \ref{enu:HScurla}--\ref{enu:HScurlf} (the values at the vertices are obtained by averaging). The following error estimate holds, for any $2\leq s\leq k+1$, that
 \begin{align}
\label{eq:estsymcurl}
\Lnorm{\mfieldzeta-P_\Lambda\mfieldzeta}_{H(\sym\ccurl,\Omega)}\lesssim h^{s}\Lnorm{\mfieldzeta}_{H^{s+1}(\Omega)}\text{ for any }\mfieldxi\in H^{s+1}(\Omega;\Tbb).
\end{align}
Suppose $(\sigma,\mfieldE,\mfieldB)\in H^{k-1}(\Omega)\times H^{k+1}(\Omega;\Sbb)\times H^{k}(\Omega)$ for $k\geq 3$.
The combination of \eqref{eq:estLtwo}--\eqref{eq:estsymcurl} with \eqref{eq:bestapr} leads to
\begin{align}
\label{eq:project}
\begin{aligned}
\Lnorm{(\sigma,\mfieldE,\mfieldB)\hspace{-0.5mm}-\hspace{-0.5mm}\Pi_h(\sigma,\mfieldE,\mfieldB)}_{\Vspace}
\lesssim& h^{k-1}\big(\Lnorm{\sigma}_{H^{k-1}(\Omega)}\hspace{-0.5mm}+\Lnorm{\mfieldE}_{H^{k+1}(\Omega)}+\Lnorm{\mfieldB}_{H^{k}(\Omega)}\big).\hspace{-0.5mm}
\end{aligned}
\end{align}
\subsection{The solution of the fully discrete system and error estimates}This subsection uses some notation in \cite{huliang2020}.  Suppose that $T=N\Delta t$ with a positive integer $N$. Let $p^j$ denote the function $p(t_j)$ with $t_j=j\Delta t$ for $j=0,1,\cdots,N$.  Define
\begin{align*}
\partial_t p^{j+\frac{1}{2}}=\frac{p^{j+1}-p^j}{\Delta t},\;\hat{p}^{j+\frac{1}{2}}=\frac{p^{j+1}+p^j}{2}.
\end{align*}

The time variable will be discretized by the Crank-Nicolson scheme. Denote $(\sigma_h^j,\mfieldE_h^j,\mfieldB_h^j)\in\Vspaceh$ the approximation of solution $(\sigma,\mfieldE,\mfieldB)$ of \eqref{eq:EinBian} at $t_j$. Given the initial data $(\sigma_h^0,\mfieldE_h^0,\mfieldB_h^0)\in\Vspaceh$, for $0\leq j\leq N-1$, the approximation $(\sigma_h^{j+1},\mfieldE_h^{j+1},\mfieldB_h^{j+1})$ at $t_{j+1}$ is defined by
\begin{align}\label{eq:fulldis}
\begin{cases}
(\partial_t\sigma_h^{j+\frac{1}{2}},q)=(\ddiv\ddiv\hat{\mfieldE}_h^{j+\frac{1}{2}},q) ,&\text{ for any }q\in P_{k-2}(\T),\\
(\partial_t\mfieldE_h^{j+\frac{1}{2}} ,\mfieldxi)=-(\hat{\sigma}_h^{j+\frac{1}{2}} ,\ddiv\ddiv\mfieldxi)-(\sym\ccurl\hat{\mfieldB}_h^{j+\frac{1}{2}},\mfieldxi),\hspace{-3.5mm}&\text{ for any }\mfieldxi\in \Sigma_{k,h},\\
(\partial_t\mfieldB_h^{j+\frac{1}{2}} ,\mfieldzeta)=(\hat{\mfieldE}_h^{j+\frac{1}{2}},\sym\ccurl\mfieldzeta),&\text{ for any }\mfieldzeta\in \Lambda_{k+1,h}.
\end{cases}
\end{align}
This can be written as
\begin{align*}
\begin{cases}
(\sigma_h^{j+1},q)-\frac{\Delta t}{2}(\ddiv\ddiv\mfieldE_h^{j+1},q)=(\sigma_h^{j},q)+\frac{\Delta t}{2}(\ddiv\ddiv\mfieldE_h^{j},q),&\text{ for any }q\in P_{k-2}(\T),\\
( \mfieldE_h^{j+1} ,\mfieldxi)+\frac{\Delta t}{2}(\sigma_h^{j+1} ,\ddiv\ddiv\mfieldxi)+\frac{\Delta t}{2}(\sym\ccurl \mfieldB_h^{j+1},\mfieldxi)&\\
\quad\quad=( \mfieldE_h^{j } ,\mfieldxi)-\frac{\Delta t}{2}(\sigma_h^{j} ,\ddiv\ddiv\mfieldxi)-\frac{\Delta t}{2}(\sym\ccurl \mfieldB_h^{j},\mfieldxi),&\text{ for any }\mfieldxi\in \Sigma_{k,h},\\
( \mfieldB_h^{j+1} ,\mfieldzeta)-\frac{\Delta t}{2}( \mfieldE_h^{j+1},\sym\ccurl\mfieldzeta)=( \mfieldB_h^{j } ,\mfieldzeta)+\frac{\Delta t}{2}( \mfieldE_h^{j },\sym\ccurl\mfieldzeta),&\text{ for any }\mfieldzeta\in \Lambda_{k+1,h}.
\end{cases}
\end{align*}
The system is nonsingular as in \cite{huliang2020}.
The error estimates mimic the proof of Theorem 6.3 of \cite{huliang2020} and  are stated in the following theorem.
\begin{theorem}Suppose $k\geq 3$. 
Let $(\sigma,\mfieldE,\mfieldB)$ solve \eqref{eq:EinBian} and let $(\sigma_h^j,\mfieldE_h^j,\mfieldB_h^j)$ solve \eqref{eq:fulldis}, let the initial data $(\sigma_h,\mfieldE_h^0,\mfieldB_h^0)=\Pi_h(\sigma(0),\mfieldE(0),\mfieldB(0))$. Assume
\begin{align*}
\sigma\in W^{1,1}\big([0,T], H^{k-1}(\Omega) \big)\cap W^{3,1}([0,T],L^2(\Omega))\cap L^\infty([0,T],H^{k-1}(\Omega)),\\
\mfieldE\in W^{1,1}\big([0,T], H^{k+1}(\Omega;\Sbb)\big)\cap W^{3,1}([0,T],L^2(\Omega;\Sbb))\cap L^\infty([0,T],H^{k+1}(\Omega;\Sbb)),\\
\mfieldB\in W^{1,1}\big([0,T], H^{k}(\Omega;\Tbb) \big)\cap W^{3,1}([0,T],L^2(\Omega;\Tbb))\cap L^\infty([0,T],H^{k}(\Omega;\Tbb)).
\end{align*}
It holds that, for $1\leq j\leq N$
\begin{align*}
&\Lnorm{\sigma^j-\sigma_h^j}_{L^2(\Omega)}+\Lnorm{\mfieldE^j-\mfieldE^j_h}_{L^2(\Omega)}+\Lnorm{\mfieldB^j-\mfieldB^j_h}_{L^2(\Omega)}\\
\lesssim &(h^{k-1}+\Delta t^2)\big(\Lnorm{\sigma}_{W^{1,1}(H^{k-1})\cap W^{3,1}(L^2)\cap L^\infty(H^{k-1})}+\Lnorm{\mfieldE}_{W^{1,1}(H^{k+1})\cap W^{3,1}(L^2)\cap L^\infty(H^{k+1})}\\
&+\Lnorm{\mfieldB}_{W^{1,1}(H^{k })\cap W^{3,1}(L^2)\cap L^\infty(H^{k })}\big)
\end{align*}
\end{theorem}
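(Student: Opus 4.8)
The plan is to follow the classical energy argument for Crank--Nicolson discretizations of Hodge-wave systems, splitting the error through the $\Acal$-projection $\Pi_h$ from \eqref{eq:probilinear}. I would write the total error as the sum of the projection error
\begin{align*}
\rho^j:=(\sigma^j-\Pi_h\sigma^j,\ \mfieldE^j-\Pi_h\mfieldE^j,\ \mfieldB^j-\Pi_h\mfieldB^j)
\end{align*}
and the discrete error
\begin{align*}
\theta^j:=(\theta_\sigma^j,\theta_{\mfieldE}^j,\theta_{\mfieldB}^j):=(\Pi_h\sigma^j-\sigma_h^j,\ \Pi_h\mfieldE^j-\mfieldE_h^j,\ \Pi_h\mfieldB^j-\mfieldB_h^j)\in\Vspaceh.
\end{align*}
The projection error is already controlled in the $\Vspace$-norm, hence in $L^2$, by the best-approximation estimate \eqref{eq:project}, uniformly in $t$ under the $L^\infty$-in-time regularity hypotheses. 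Since the initial data is chosen as $(\sigma_h^0,\mfieldE_h^0,\mfieldB_h^0)=\Pi_h(\sigma(0),\mfieldE(0),\mfieldB(0))$, we have $\theta^0=0$, so the task reduces to bounding $\theta^j$ in $L^2$.

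First I would derive the error equation for $\theta$. Evaluating the continuous system \eqref{eq:EinBian} at the half-time level $t_{j+1/2}$, forming the Crank--Nicolson averages, and subtracting the fully discrete scheme \eqref{eq:fulldis} tested against $\Vspaceh$, I obtain a system for $\partial_t\theta^{j+1/2}$ whose right-hand side collects two kinds of residuals: the time-truncation error $\eta^{j+1/2}$, measuring the discrepancy between $\partial_t(\cdot)^{j+1/2}$, $\hat{(\cdot)}^{j+1/2}$ and the exact values at $t_{j+1/2}$, and the projection-error contributions. The crucial simplification is that the Galerkin orthogonality \eqref{eq:fullprobilinear} defining $\Pi_h$ annihilates the spatial coupling contributions of $\rho$, namely the pairings $(\rho_\sigma,\ddiv\ddiv\mfieldxi)$, $(\ddiv\ddiv\rho_{\mfieldE},q)$, $(\sym\ccurl\rho_{\mfieldB},\mfieldxi)$ and $(\rho_{\mfieldE},\sym\ccurl\mfieldzeta)$; only the $L^2$ mass contributions of the projection error survive, entering through $\partial_t\rho^{j+1/2}$.

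The core step is the discrete energy estimate. Testing the three error equations by the averaged quantities $\widehat{\theta}_\sigma^{\,j+1/2}$, $\widehat{\theta}_{\mfieldE}^{\,j+1/2}$ and $\widehat{\theta}_{\mfieldB}^{\,j+1/2}$ respectively and adding, the antisymmetric coupling pairs $\pm(\ddiv\ddiv\,\cdot,\cdot)$ and $\pm(\sym\ccurl\,\cdot,\cdot)$ cancel exactly, leaving the telescoping identity
\begin{align*}
\tfrac12\big(\Lnorm{\theta^{j+1}}_{L^2(\Omega)}^2-\Lnorm{\theta^{j}}_{L^2(\Omega)}^2\big)=\Delta t\,\big(\eta^{j+1/2}+\partial_t\rho^{j+1/2},\ \widehat{\theta}^{\,j+1/2}\big),
\end{align*}
where the $L^2$-norm and inner product act componentwise on $\Vspaceh$. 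Applying Cauchy--Schwarz on the right, summing over $j=0,\dots,m-1$, using $\theta^0=0$, and invoking a discrete Gr\"onwall argument then bounds $\Lnorm{\theta^m}_{L^2(\Omega)}$ by $\Delta t\sum_{j}\big(\Lnorm{\eta^{j+1/2}}_{L^2(\Omega)}+\Lnorm{\partial_t\rho^{j+1/2}}_{L^2(\Omega)}\big)$.

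The main obstacle, and where the regularity hypotheses are spent, is the bookkeeping of these two residual sums. For the time-truncation term, Taylor expansion with integral remainder gives $\Lnorm{\eta^{j+1/2}}_{L^2(\Omega)}\lesssim\Delta t\int_{t_j}^{t_{j+1}}\big(\Lnorm{\partial_t^3\sigma}_{L^2(\Omega)}+\Lnorm{\partial_t^3\mfieldE}_{L^2(\Omega)}+\Lnorm{\partial_t^3\mfieldB}_{L^2(\Omega)}\big)\ds$, so that $\Delta t\sum_j\Lnorm{\eta^{j+1/2}}_{L^2(\Omega)}\lesssim\Delta t^2\,(\Lnorm{\sigma}_{W^{3,1}(L^2)}+\Lnorm{\mfieldE}_{W^{3,1}(L^2)}+\Lnorm{\mfieldB}_{W^{3,1}(L^2)})$; this is precisely why the $W^{3,1}([0,T],L^2)$ norms enter. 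For the projection residual, I would use that $\partial_t$ commutes with the $\Acal$-projection, so $\partial_t\rho$ is itself the projection error of $(\dot\sigma,\dot{\mfieldE},\dot{\mfieldB})$; then \eqref{eq:project} gives $\Lnorm{\partial_t\rho^{j+1/2}}_{L^2(\Omega)}\lesssim h^{k-1}\Delta t^{-1}\int_{t_j}^{t_{j+1}}\big(\Lnorm{\dot\sigma}_{H^{k-1}(\Omega)}+\Lnorm{\dot{\mfieldE}}_{H^{k+1}(\Omega)}+\Lnorm{\dot{\mfieldB}}_{H^{k}(\Omega)}\big)\ds$, whence $\Delta t\sum_j\Lnorm{\partial_t\rho^{j+1/2}}_{L^2(\Omega)}\lesssim h^{k-1}(\Lnorm{\sigma}_{W^{1,1}(H^{k-1})}+\Lnorm{\mfieldE}_{W^{1,1}(H^{k+1})}+\Lnorm{\mfieldB}_{W^{1,1}(H^{k})})$. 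Combining the resulting bound on $\Lnorm{\theta^j}_{L^2(\Omega)}$ with the projection error $\Lnorm{\rho^j}_{L^2(\Omega)}$ from \eqref{eq:project} via the triangle inequality yields the asserted $O(h^{k-1}+\Delta t^2)$ estimate. The delicate points are verifying that $\partial_t$ genuinely commutes with the $\Acal$-projection (so that $\partial_t\rho$ is again a projection error) and that the energy cancellation is exact at the averaged level; both are inherited from the structure already used in \cite{huliang2020}.
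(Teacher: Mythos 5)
Your overall strategy coincides with the paper's: split the error through the $\Acal$-projection $\Pi_h$, use $\theta^0=0$ from the choice of initial data, derive Crank--Nicolson error equations, run a telescoping energy argument tested with the averaged discrete errors $\hat{\theta}^{j+\frac{1}{2}}$, and estimate the residuals by Taylor expansion (the $W^{3,1}(L^2)$ terms) and by commuting $\partial_t$ with $\Pi_h$ (the $W^{1,1}(H^{\cdot})$ terms). However, there is a concrete error at the heart of your energy identity. The orthogonality \eqref{eq:fullprobilinear} does \emph{not} annihilate the coupling pairings of the projection error: writing $p=\Pi_h(\cdot)-(\cdot)$ and taking test functions $(q,0,0)$, $(0,\mfieldxi,0)$, $(0,0,\mfieldzeta)$ in \eqref{eq:probilinear} gives
\begin{align*}
(\ddiv\ddiv p_{\mfieldE},q)=(p_\sigma,q),\quad
(p_\sigma,\ddiv\ddiv\mfieldxi)+(\sym\ccurl p_{\mfieldB},\mfieldxi)=-(p_{\mfieldE},\mfieldxi),\quad
(p_{\mfieldE},\sym\ccurl\mfieldzeta)=(p_{\mfieldB},\mfieldzeta),
\end{align*}
so the coupling terms are \emph{converted into} $L^2$ mass terms of the projection error, not killed. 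Consequently the right-hand side of the correct energy identity contains, besides your $\eta^{j+\frac{1}{2}}$ and $\partial_t\rho^{j+\frac{1}{2}}$, also the averaged mass terms $\hat{p}^{j+\frac{1}{2}}$; this is exactly the paper's identity, whose right-hand side is $\big(-\partial_t p^{j+\frac{1}{2}}-(\partial_t(\cdot)^{j+\frac{1}{2}}-\hat{\dot{(\cdot)}}^{j+\frac{1}{2}})+\hat{p}^{j+\frac{1}{2}},\ \hat{\theta}^{j+\frac{1}{2}}\big)$ componentwise. Your identity, and hence your final bound $\Lnorm{\theta^m}\lesssim\Delta t\sum_j\big(\Lnorm{\eta^{j+\frac{1}{2}}}+\Lnorm{\partial_t\rho^{j+\frac{1}{2}}}\big)$, omits these terms, so as stated it is false.

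The omission is repairable and does not change the final rate: after summation the extra terms contribute $\Delta t\sum_{m}\Lnorm{p^m}_{L^2(\Omega)}\lesssim j\Delta t\, h^{k-1}\big(\Lnorm{\sigma}_{L^\infty(H^{k-1})}+\Lnorm{\mfieldE}_{L^\infty(H^{k+1})}+\Lnorm{\mfieldB}_{L^\infty(H^{k})}\big)$, which is precisely the last term in the paper's summed estimate, and is also where the $L^\infty$-in-time hypotheses are genuinely consumed --- in your accounting they are used only in the final triangle inequality, which should have signalled that a term was being lost. Two minor further remarks: no discrete Gr\"onwall lemma is needed, since the right-hand side is linear in $\hat{\theta}^{j+\frac{1}{2}}$; dividing the squared-norm increment by $\Lnorm{\theta^{j+1}}+\Lnorm{\theta^{j}}$ yields a directly telescoping bound on the norms, which is how the paper proceeds. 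And your appeal to the commutation of $\partial_t$ with $\Pi_h$ is sound (it is a fixed, time-independent linear operator), matching the paper's implicit use of $\dot{p}=\Pi_h\dot{(\cdot)}-\dot{(\cdot)}$ together with \eqref{eq:project}.
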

\begin{proof}
There exists the following decomposition of the errors
\begin{align*}
\delta^j_{\sigma}:&=\sigma_h^j-\sigma^j=(\sigma^j_h-\Pi_h\sigma^j)+(\Pi_h\sigma^j-\sigma^j):=\theta^j_{\sigma}+p^j_{\sigma},\\
\delta^j_{\mfieldE}:&=\mfieldE_h^j-\mfieldE^j=(\mfieldE^j_h-\Pi_h\mfieldE^j)+(\Pi_h\mfieldE^j-\mfieldE^j):=\theta^j_{\mfieldE}+p^j_{\mfieldE},\\
\delta^j_{\mfieldB}:&=\mfieldB_h^j-\mfieldB^j=(\mfieldB^j_h-\Pi_h\mfieldB^j)+(\Pi_h\mfieldB^j-\mfieldB^j):=\theta^j_{\mfieldB}+p^j_{\mfieldB}.
\end{align*}
The second terms in the above have been estimated in \eqref{eq:project}. It remains to analyze  the errors $(\theta^j_{\sigma},\theta^j_{\mfieldE},\theta^j_{\mfieldB})$.

The  choices of  $t=t_j$ and $t=t_{j+1}$ in \eqref{eq:EinBian} lead  to
\begin{align}
\begin{cases}
(\hat{\dot{\sigma}}^{j+\frac{1}{2}},q)=(\ddiv\ddiv\hat{\mfieldE}^{j+\frac{1}{2}},q),&\text{ for any }\tau\in P_{k-2}(\T)\\
(\hat{\dot{\mfieldE}}^{j+\frac{1}{2}},\mfieldxi)=-(\hat{\sigma}^{j+\frac{1}{2}},\ddiv\ddiv\mfieldxi)-(\sym\ccurl\hat{\mfieldB}^{j+\frac{1}{2}},\mfieldxi),&\text{ for any }\mfieldxi\in \Sigma_{k,h},\\
(\hat{\dot{\mfieldB}}^{j+\frac{1}{2}},\mfieldzeta)=(\hat{ \mfieldE }^{j+\frac{1}{2}},\sym\ccurl\mfieldzeta),&\text{ for any }\mfieldzeta\in\Lambda_{k+1,h}.
\end{cases}
\end{align}
Substracting \eqref{eq:fulldis} from the above equations shows
\begin{align}\label{eq:erreq}
\begin{cases}
(\partial_t \delta_{\sigma}^{j+\frac{1}{2}},q)+(\partial_t\sigma^{j+\frac{1}{2}}-\hat{\dot{\sigma}}^{j+\frac{1}{2}},q)=(\ddiv\ddiv\hat{\delta}_{\mfieldE}^{j+\frac{1}{2}},q) ,&\text{ for any }q\in P_{k-2}(\T),\\
(\partial_t\delta_{\mfieldE}^{j+\frac{1}{2}} ,\mfieldxi)+(\partial_t\mfieldE^{j+\frac{1}{2}}-\hat{\dot{\mfieldE}}^{j+\frac{1}{2}},\mfieldxi)&\\
\quad=-(\hat{\delta}_{\sigma}^{j+\frac{1}{2}} ,\ddiv\ddiv\mfieldxi)-(\sym\ccurl\hat{\delta}_{\mfieldB}^{j+\frac{1}{2}},\mfieldxi),&\text{ for any }\mfieldxi\in \Sigma_{k,h},\\
(\partial_t\delta_{\mfieldB}^{j+\frac{1}{2}} ,\mfieldzeta)+(\partial_t\mfieldB^{j+\frac{1}{2}}-\hat{\dot{\mfieldB}}^{j+\frac{1}{2}},\mfieldzeta)=(\hat{\delta}_{\mfieldE}^{j+\frac{1}{2}},\sym\ccurl\mfieldzeta),&\text{ for any }\mfieldzeta\in \Lambda_{k+1,h}.
\end{cases}
\end{align}
It follows from \eqref{eq:fullprobilinear} that
\begin{align*}
\begin{cases}
(\hat{p}_{\sigma}^{j+\frac{1}{2}},q)=(\ddiv\ddiv\hat{p}_{\mfieldE}^{j+\frac{1}{2}},q),&\text{ for any }q\in P_{k-2}(\T),\\
(\hat{p}_{\mfieldE}^{j+\frac{1}{2}},\mfieldxi)=-(\hat{p}_{\sigma}^{j+\frac{1}{2}},\ddiv\ddiv\mfieldxi)-(\sym\ccurl\hat{p}_{\mfieldB}^{j+\frac{1}{2}},\mfieldxi),&\text{ for any }\mfieldxi\in \Sigma_{k,h},\\
(\hat{p}_{\mfieldB}^{j+\frac{1}{2}},\mfieldzeta)=(\hat{p}_{\mfieldE}^{j+\frac{1}{2}},\sym\ccurl\mfieldzeta),&\text{ for any }\mfieldzeta\in \Lambda_{k+1,h}.
\end{cases}
\end{align*}

The choices of  $q=\hat{\theta}_{\sigma}^{j+\frac{1}{2}}$, $\mfieldxi=\hat{\theta}_{\mfieldE}^{j+\frac{1}{2}}$, $\mfieldzeta=\hat{\theta}_{\mfieldB}^{j+\frac{1}{2}}$ in \eqref{eq:erreq} and the above equations  plus an elementary computation lead to
\begin{align*}
\begin{aligned}
&\big(\Lnorm{\theta_{\sigma}^{j+1}}^2_{L^2(\Omega)}+\Lnorm{\theta_{\mfieldE}^{j+1}}^2_{L^2(\Omega)}+\Lnorm{\theta_{\mfieldB}^{j+1}}^2_{L^2(\Omega)}\big)-\big(\Lnorm{\theta_{\sigma}^{j }}^2_{L^2(\Omega)}+\Lnorm{\theta_{\mfieldE}^{j }}^2_{L^2(\Omega)}+\Lnorm{\theta_{\mfieldB}^{j }}^2_{L^2(\Omega)}\big)\\
=&2\Delta t\big(-\partial_t p_{\sigma}^{j+\frac{1}{2}}-(\partial_t\sigma^{j+\frac{1}{2}}-\hat{\dot{\sigma}}^{j+\frac{1}{2}})+\hat{p}_{\sigma}^{j+\frac{1}{2}},\hat{\theta}_{\sigma}^{j+\frac{1}{2}}\big)\\
&+2\Delta t\big(-\partial_tp_{\mfieldE}^{j+\frac{1}{2}}-(\partial_t\mfieldE^{j+\frac{1}{2}}-\hat{\dot{\mfieldE}}^{j+\frac{1}{2}})+\hat{p}_{\mfieldE}^{j+\frac{1}{2}},\hat{\theta}_{\mfieldE}^{j+\frac{1}{2}}\big)\\
&+2\Delta t\big(-\partial_tp_{\mfieldB}^{j+\frac{1}{2}}-(\partial_t\mfieldB^{j+\frac{1}{2}}-\hat{\dot{\mfieldB}}^{j+\frac{1}{2}})+\hat{p}_{\mfieldB}^{j+\frac{1}{2}},\hat{\theta}_{\mfieldB}^{j+\frac{1}{2}}\big).
\end{aligned}
\end{align*}
An application of the Cauchy-Schwarz inequality proves
\begin{align}\label{eq:timestep}
\begin{aligned}
&\big(\Lnorm{\theta_{\sigma}^{j+1}}^2_{L^2(\Omega)}\hspace{-1mm}+\hspace{-0.5mm}\Lnorm{\theta_{\mfieldE}^{j+1}}^2_{L^2(\Omega)}\hspace{-1mm}+\hspace{-0.5mm}\Lnorm{\theta_{\mfieldB}^{j+1}}^2_{L^2(\Omega)}\big)\hspace{-0.5mm}^{\frac{1}{2}}\hspace{-0.75mm}-\hspace{-0.5mm}\big(\Lnorm{\theta_{\sigma}^{j }}^2_{L^2(\Omega)}\hspace{-1mm}+\hspace{-0.5mm}\Lnorm{\theta_{\mfieldE}^{j }}^2_{L^2(\Omega)}\hspace{-1mm}+\hspace{-0.5mm}\Lnorm{\theta_{\mfieldB}^{j }}^2_{L^2(\Omega)}\big)\hspace{-0.5mm}^{\frac{1}{2}}\hspace{-0.5mm}\\
&\lesssim  \Delta t\big(\Lnorm{\partial_t p_{\sigma}^{j+\frac{1}{2}}}_{L^2(\Omega)}+\Lnorm{\partial_t\sigma^{j+\frac{1}{2}}-\hat{\dot{\sigma}}^{j+\frac{1}{2}}}_{L^2(\Omega)}+\Lnorm{\hat{p}_{\sigma}^{j+\frac{1}{2}}}_{L^2(\Omega)}\\
&\quad+\Lnorm{\partial_t p_{\mfieldE}^{j+\frac{1}{2}}}_{L^2(\Omega)}+\Lnorm{\partial_t\mfieldE^{j+\frac{1}{2}}-\hat{\dot{\mfieldE}}^{j+\frac{1}{2}}}_{L^2(\Omega)}+\Lnorm{\hat{p}_{\mfieldE}^{j+\frac{1}{2}}}_{L^2(\Omega)}\\
&\quad+\Lnorm{\partial_t p_{\mfieldB}^{j+\frac{1}{2}}}_{L^2(\Omega)}+\Lnorm{\partial_t\mfieldB^{j+\frac{1}{2}}-\hat{\dot{\mfieldB}}^{j+\frac{1}{2}}}_{L^2(\Omega)}+\Lnorm{\hat{p}_{\mfieldB}^{j+\frac{1}{2}}}_{L^2(\Omega)}\big).
\end{aligned}
\end{align}
Given $g\in C^3[0,T]$, the Taylor expansion of $g$ reads
\begin{align*}
\begin{aligned}
\Delta t\Lnorm{\partial_t g^{j+\frac{1}{2}}}_{L^2(\Omega)}&=\Lnorm{\int_{t_j}^{t_{j+1}}\dot{g}\,dt}_{L^2(\Omega)}\leq \int_{t_j}^{t_{j+1}}\Lnorm{\dot{g}}_{L^2(\Omega)}\,dt,\\
\Delta t\Lnorm{\partial_tg^{j+\frac{1}{2}} -\hat{\dot{g}}^{j+\frac{1}{2}} }_{L^2(\Omega)}&=\frac{1}{2}\Lnorm{2g^{j+1}-2g^j-\Delta t\dot{g}^{j+1}-\Delta t\dot{g}^j}_{L^2(\Omega)}\\
&\lesssim\Delta t^2\int_{t_j}^{t_{j+1}}\Lnorm{\dddot{g}}_{L^2(\Omega)}\,dt.
\end{aligned}
\end{align*}
A summation of \eqref{eq:timestep}  over all the  time intervals  plus  the above estimates lead to
\begin{align*}
\begin{aligned}
&\big(\Lnorm{\theta_{\sigma}^{j+1}}^2_{L^2(\Omega)}\hspace{-1mm}+\hspace{-0.5mm}\Lnorm{\theta_{\mfieldE}^{j+1}}^2_{L^2(\Omega)}\hspace{-1mm}+\hspace{-0.5mm}\Lnorm{\theta_{\mfieldB}^{j+1}}^2_{L^2(\Omega)}\big)\hspace{-0.5mm}^{\frac{1}{2}}\hspace{-0.75mm}-\hspace{-0.5mm}\big(\Lnorm{\theta_{\sigma}^{0 }}^2_{L^2(\Omega)}\hspace{-1mm}+\hspace{-0.5mm}\Lnorm{\theta_{\mfieldE}^{0 }}^2_{L^2(\Omega)}\hspace{-1mm}+\hspace{-0.5mm}\Lnorm{\theta_{\mfieldB}^{0 }}^2_{L^2(\Omega)}\big)\hspace{-0.5mm}^{\frac{1}{2}}\hspace{-0.5mm}\\
& \lesssim \int_0^{t_{j+1}}\big(\Lnorm{\dot{p}_{\sigma}}_{L^2(\Omega)}+\Lnorm{\dot{p}_{\mfieldE}}_{L^2(\Omega)}+\Lnorm{\dot{p}_{\mfieldB}}_{L^2(\Omega)}\big)\,dt\\
&\quad+ \Delta t^2\int_0^{t_{j+1}}\big(\Lnorm{\dddot{\sigma}}_{L^2(\Omega)}+\Lnorm{\dddot{\mfieldE}}_{L^2(\Omega)}+\Lnorm{\dddot{\mfieldB}}_{L^2(\Omega)}\big)\,dt\\
&\quad+\Delta t\sum^{j+1}_{m=0}\big(\Lnorm{p^m_{\sigma}}_{L^2(\Omega)}+\Lnorm{p^m_{\mfieldE}}_{L^2(\Omega)}+\Lnorm{p^m_{\mfieldB}}_{L^2(\Omega)}\big).
\end{aligned}
\end{align*}
Since the initial data $(\sigma_h^0,\mfieldE_h^0,\mfieldB_h^0)=\Pi_h(\sigma(0),\mfieldE(0),\mfieldB(0))$, it implies that $(\theta_{\sigma}^0,\theta_{\mfieldE}^0,\theta_{\mfieldB}^0)$ vanishes. By the estimates of the projection errors  in \eqref{eq:project}, this shows that
\begin{align*}
&\big(\Lnorm{\theta_{\sigma}^{j+1}}^2_{L^2(\Omega)}+\Lnorm{\theta_{\mfieldE}^{j+1}}^2_{L^2(\Omega)}+\Lnorm{\theta_{\mfieldB}^{j+1}}^2_{L^2(\Omega)}\big)^{\frac{1}{2}}\\
\lesssim &   h^{k-1}\int_0^{t_{j+1}}\big(\Lnorm{\dot{\sigma}}_{H^{k-1}(\Omega)}+\Lnorm{\dot{\mfieldE}}_{H^{k+1}(\Omega)}+\Lnorm{\dot{\mfieldB}}_{H^{k}(\Omega)}\big)\,dt\\
&+\Delta t^2\int_0^{t_{j+1}}\big(\Lnorm{\dddot{\sigma}}_{L^2(\Omega)}+\Lnorm{\dddot{\mfieldE}}_{L^2(\Omega)}+\Lnorm{\dddot{\mfieldB}}_{L^2(\Omega)}\big)\,dt\\
&+j\Delta th^{k-1}(\Lnorm{\sigma}_{L^\infty(H^{k-1})}+\Lnorm{\mfieldE}_{L^\infty(H^{k+1})}+\Lnorm{\mfieldB}_{L^\infty(H^{k})}) .
\end{align*}
A combination of  this and the estimates of the projection errors in \eqref{eq:project} completes the proof.
\end{proof}
\bibliographystyle{siamplain}
\bibliography{lit_mit_doi}
\end{document}